\def\Z{{\mathbb Z}}
\def\SL{{\rm SL}}
\def\GL{{\rm GL}}
\def\OO{{\mathcal O}}
\def\SO{{\rm SO}}
\def\PSO{{\rm PSO}}
\def\s{{\rm stab.}}
\def\irr{{\rm irr}}
\def\dist{{\rm dist}}
\def\red{{\rm red}}
\def\Vol{{\rm Vol}}
\def\R{{\mathbb R}}
\def\F{{\mathbb F}}
\def\FF{{\mathcal F}}
\def\Q{{\mathbb Q}}
\def\C{{\mathcal C}}
\def\U{{\mathcal W}}
\def\W{{\mathcal W}}
\def\V{{\mathcal V}}
\def\Vmn{{V_n}}
\def\Z{{\mathbb Z}}
\def\F{{\mathbb F}}
\def\Q{{\mathbb Q}}
\def\C{{\mathbb C}}
\def\GG{{\mathcal G}}
\def\LL{{\mathcal L}}
\def\w{{\rm {(2)}}}
\def\s{{\rm {(1)}}}
\def\max{{\rm max}}
\def\Res{{\textrm{Res}}}
\def\sub{{\textrm{top}}}
\newtheorem{theorem}{Theorem}[section]
\newtheorem{corollary}[theorem]{Corollary}
\newtheorem{lemma}[theorem]{Lemma}
\newtheorem{proposition}[theorem]{Proposition}
\newtheorem{remark}[theorem]{Remark}
\newenvironment{proof}{\noindent {\bf Proof:}}{$\Box$ \vspace{2 ex}}
\title{Squarefree values of polynomial discriminants I}
\author{Manjul Bhargava, Arul Shankar, and Xiaoheng Wang}
\begin{document}

\maketitle

\begin{abstract}
We determine the density of monic integer polynomials of given degree
$n>1$ that have squarefree discriminant; in particular, we prove for the
first time that the lower density of such polynomials is positive.
Similarly, we prove that the density of monic integer polynomials
$f(x)$, such that $f(x)$ is irreducible and $\Z[x]/(f(x))$ is the ring
of integers in its fraction field, is positive, and is in fact given
by~$\zeta(2)^{-1}$.

It also follows from our methods that there are $\gg X^{1/2+1/n}$
monogenic number fields of degree~$n$ having associated Galois
group~$S_n$ and absolute discriminant less than $X$, and we conjecture
that the exponent in this lower bound is optimal.
\end{abstract}

\section{Introduction}

The pupose of this paper is to determine the density of monic integer
polynomials of given degree whose discriminant is squarefree.  For
polynomials $f(x)=x^n+a_1x^{n-1}+\cdots+a_n$, the term $(-1)^ia_i$
represents the sum of the $i$-fold products of the roots of $f$. It is
thus natural to order monic polynomials
$f(x)=x^n+a_1x^{n-1}+\cdots+a_n$ by the height
$H(f):=\max\{|a_i|^{1/i}\}$ (see, e.g., \cite{BG2}, \cite{PS2},
\cite{SW}).  We determine the density of monic integer polynomials
of degree $n$ having squarefree discriminant with respect to the ordering by this
height, and show that the density is positive.  The existence of infinitely many monic integer
polynomials of each degree having squarefree discriminant was
first demonstrated by Kedlaya~\cite{Kedlaya}.
However, it has not previously been known whether the density exists or even that the lower density
is positive.

To state the theorem, define the constants $\lambda_n(p)$ by
\begin{equation}\label{jos}
\lambda_n(p)=\left\{
\begin{array}{cl}
1 & \mbox{if $n =1$,}\\[.0875in]
1-\displaystyle\frac1{p^2} & \mbox{if $n= 2$,}\\[.1875in]
1-\displaystyle \frac{3p^{n-1}-p^{n-2}+(-1)^n(p-1)^2}{p^n(p+1)} & \mbox{if $n\geq 3$}
\end{array}\right.
\end{equation}
for $p\neq 2$; also, let $\lambda_1(2)=1$ and $\lambda_n(2)=1/2$ for
$n\geq2$.  Then a result of Yamamura~\cite[Proposition~3]{Yamamura2}
states
that $\lambda_n(p)$ is the density of monic polynomials of degree $n$ over~$\Z_p$
having discriminant indivisible by~$p^2$.
Let~$\lambda_n:=\prod_p\lambda_n(p)$, where the product is over all
primes $p$.  We~prove:

\begin{theorem}\label{polydisc2}
Let $n\geq1$ be an integer.  Then when
monic integer polynomials $f(x)=x^n+a_1x^{n-1}+\cdots+a_n$ of
degree~$n$ are ordered by $H(f):=
\max\{|a_1|,|a_2|^{1/2},\ldots,|a_n|^{1/n}\}$, the density having
squarefree discriminant $\Delta(f)$ exists and is equal to $\lambda_n>0$.
\end{theorem}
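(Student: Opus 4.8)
The plan is to count monic integer polynomials $f$ of degree $n$ with $H(f) < X$ whose discriminant $\Delta(f)$ is squarefree, and to show the count is $\lambda_n X^{n(n+1)/2}(1+o(1))$ as $X\to\infty$. Write $\Delta(f)$ squarefree as the condition that for every prime $p$, $p^2 \nmid \Delta(f)$. By inclusion–exclusion, the main term $\prod_p \lambda_n(p) \cdot X^{n(n+1)/2}$ is predicted by Brakenhoff's local density computation together with the (standard) equidistribution of the box $\{H(f)<X\}$ in $\prod_{p\le Y}\Z_p^n$ for any fixed $Y$. The entire difficulty is therefore a \emph{uniformity} statement: one must show that the contribution of polynomials $f$ with $p^2 \mid \Delta(f)$ for some prime $p > Y$ is negligible, uniformly, as $Y\to\infty$. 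I would split this \emph{tail} into two ranges according to the size of $p$.

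For small-to-medium primes, say $Y < p \le X^{\delta}$ for a suitable small $\delta>0$, the count of $f$ with $H(f)<X$ and $p^2\mid \Delta(f)$ should be bounded by $O\big(X^{n(n+1)/2}/p^2\big)$ plus lower-order error, simply because the locus $\{p^2 \mid \Delta\}$ in $(\Z/p^2\Z)^n$ has density $O(1/p^2)$ (this is essentially Brakenhoff's computation read mod $p^2$, together with a clean lattice-point count in the box). Summing $\sum_{p>Y} 1/p^2 \to 0$ handles this range. The genuinely hard range is $p$ large — roughly $p \gg X^{\delta}$, and in particular $p$ as large as $X^{n/2}$ or so, which is the size $\Delta(f)$ itself can attain. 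Here one cannot afford a crude congruence count: the number of $f$ in the box with $p^2 \mid \Delta(f)$ must be shown to be, on average over such $p$, much smaller than $X^{n(n+1)/2}$ divided by $\sum 1$ trivialities. The key idea — and this is where the real work lies — is that $p^2 \mid \Delta(f)$ forces $f \bmod p$ to have a \emph{strong} degeneracy: either $f \bmod p$ has a repeated root of multiplicity $\ge 3$, or it has \emph{two} distinct repeated roots, or it has a repeated root whose multiplicity-$2$ reduction is itself "non-transverse." Each of these is a codimension-$\ge 2$ condition, so the relevant subscheme of $\mathbb{A}^n$ has dimension $\le n-2$; counting points of bounded height on (the integral points reducing into) such a subscheme modulo $p$, and summing the resulting bound over all large $p$, should give $o(X^{n(n+1)/2})$.

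Concretely, I would proceed as follows. (1) Establish the main term: for fixed $Y$, by the Chinese Remainder Theorem and a lattice-point count in the skewed box $\{|a_i|<X^i\}$, the density of $f$ with $p^2\nmid\Delta(f)$ for all $p\le Y$ is $\prod_{p\le Y}\lambda_n(p) + O_Y(X^{-1})$; let $Y\to\infty$ after the tail estimate. (2) Medium primes $Y<p\le X^{1/(2n)}$ (or a similar threshold): bound the count with $p^2\mid\Delta(f)$ by $\ll X^{n(n+1)/2}/p^2 + $ (error from the box not being exactly a union of residue classes), and sum. (3) Large primes: here $p^2 \mid \Delta(f)$ and $p>X^{1/(2n)}$ means that the reduction $\bar f\in\F_p[x]$ lies in the discriminant hypersurface and moreover $f$ meets it "doubly"; translate this, via a resultant/Taylor-expansion argument at the repeated factor, into membership of $(a_1,\dots,a_n)$ in an explicit subvariety $V_p\subset \mathbb A^n_{\F_p}$ of codimension $\ge 2$ (after removing the part already counted with multiplicity one, which contributes to $\Delta$ but not to $\Delta/p$). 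Count integer points of height $<X$ reducing into $V_p$ by a Selberg-sieve / geometry-of-numbers argument, obtaining $\ll X^{n(n+1)/2-1}\log X \cdot (1/p^{?})$ type bounds, and sum over $p > X^{1/(2n)}$. (4) Finally, the squarefreeness of $\Delta(f)$ for the surviving $f$ needs the observation that $\Delta(f)$ has only $O(\log X)$ prime factors exceeding $X^{1/(2n)}$, so the "large prime" loss is controlled.

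The main obstacle, as indicated, is step (3): making precise the claim that "$p^2\mid\Delta(f)$" is, away from a negligible set, a codimension-$2$ condition mod $p$, and then counting lattice points of bounded height lying in a fiber-by-fiber low-dimensional family as $p$ ranges over a long interval. This is exactly the kind of estimate that in the analogous problem for squarefree \emph{values} of one-variable polynomials requires the full strength of sieve methods (or, for higher-degree obstructions, remains conjectural); here one is rescued by the \emph{geometry} — the discriminant is a polynomial in $n$ variables, and the relevant "bad" loci for $p^2$-divisibility genuinely drop dimension by $2$ rather than $1$, so that $\sum_p X^{n-2}\cdot(\text{something}) = o(X^n)$ after reinstating the $a_i$-weights. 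I expect the quantitative input to be a uniform upper bound sieve (à la the square-free-sieve of Granville–Hooley type, adapted to this multivariate setting) combined with the explicit description of $\lambda_n(p)$ from Brakenhoff, which already encodes that the "density of $p^2\mid\Delta$" behaves like $\frac1p - \frac1p + O(1/p^2)$ — the cancellation of the $1/p$ terms being precisely the reflection of the codimension-$2$ phenomenon.
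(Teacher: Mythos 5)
There is a genuine gap, and it sits exactly at your step (3). Your argument rests on the claim that, for large $p$, the condition $p^2\mid\Delta(f)$ places the reduction of $(a_1,\dots,a_n)$ in a subvariety of $\mathbb{A}^n_{\F_p}$ of codimension at least $2$. That is true only for the \emph{strongly divisible} part of the locus (where $f\bmod p$ has two distinct multiple roots or a root of multiplicity at least $3$), and that part is indeed handled in the paper by a quantitative Ekedahl-type geometric sieve (Theorem~\ref{thm:mainestimate}(a)). But the generic way for $p^2$ to divide $\Delta(f)$ is the \emph{weakly divisible} case: $f\bmod p$ has a single simple double root, which is a codimension-$1$ condition mod $p$, and the extra divisibility is a further condition that lives modulo $p^2$, not modulo $p$ (after centering the double root at $0$, it is the condition $p\mid c_{n-1}$, $p^2\mid c_n$). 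The apparent cancellation of the $1/p$ terms in $1-\lambda_n(p)=O(1/p^2)$ reflects this mod-$p^2$ structure, not a drop in dimension of the mod-$p$ locus; so for $p$ beyond the range where the box equidistributes modulo $p^2$, neither congruence counting, nor the geometric sieve, nor a Granville--Hooley/Selberg-type upper-bound sieve applies, and your step (4) observation about $\Delta(f)$ having few large prime factors does not rescue this. This is precisely why even the positivity of the lower density was open before this paper.

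The paper's route through this obstruction is a new idea that your proposal does not contain: for each squarefree $m$, polynomials whose discriminant is weakly divisible by $p^2$ for all $p\mid m$ are embedded, discriminant-preservingly, into the space of symmetric $n\times n$ matrices via an explicit map $\sigma_m$ landing in a subspace $W_0$ on which the discriminant is divisible by the square of a new invariant $Q$, with $|Q(\sigma_m(f))|=m$; in $W_0$ the divisibility becomes \emph{strong}, and the tail estimate (Theorem~\ref{thm:mainestimate}(b)) is then obtained by counting $G(\Z)$-orbits of such distinguished elements with $|Q|>M$ in the cusp of a fundamental domain for $\SO_n(\Z)$, using the negative torus weight of $Q$ to force the saving of $1/M$ (Propositions~\ref{proplargeQbound} and~\ref{proplargeQboundeven}). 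Your steps (1) and (2), and your treatment of the genuinely codimension-$2$ locus, are consistent with the paper's sieve in Section~\ref{sec:sieve}, but without an argument of the above kind (or some substitute) for the weakly divisible range of large $p$, the proof does not close.
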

Our method of proof implies that the theorem remains true even if we
restrict only to those polynomials of a given degree $n$ having a
given number of real roots.

It is easy to see from the definition of the $\lambda_n(p)$ that the
$\lambda_n$ rapidly approach a limit $\lambda$ as $n\to\infty$,
namely,
\begin{equation}
\lambda=\lim_{n\to\infty} \lambda_n = \frac12 \cdot \prod_{p\geq 3}
\left(1-\displaystyle\frac{3p-1}{p^2(p+1)}\right)
\approx 30.7056\%.
\end{equation}
Therefore, as the degree tends to infinity, the
probability that a random monic integer polynomial has squarefree
discriminant tends to $\lambda\approx 30.7056\%$.

In algebraic number theory, one often considers number fields that are
defined as a quotient ring $K_f:=\Q[x]/(f(x))$ for some irreducible
integer polynomial $f(x)$. The question naturally arises as to whether
$R_f:=\Z[x]/(f(x))$ gives the ring of integers of $K_f$.  Our second
main theorem states that this is in fact the case for {most}
polynomials $f(x)$. We prove:

\begin{theorem}\label{polydiscmax2}
  Let $n > 1$ be an integer.  Then when monic integer polynomials
  $f(x)=x^n+a_1x^{n-1}+\cdots+a_n$ of degree~$n$ are ordered by
  $H(f)$, the density of polynomials $f$ such that $\Z[x]/(f(x))$ is
  the ring of integers in its fraction field is
  $\prod_p(1-1/p^2)=\zeta(2)^{-1}$.
\end{theorem}
Note that $\zeta(2)^{-1}\approx\, 60.7927\%$.  Since a density of
100\% of monic integer polynomials are irreducible (and indeed have
associated Galois group $S_n$) by Hilbert's irreducibility theorem, it
follows that $\approx 60.7927\%$ of monic integer polynomials $f$ of
any given degree $n>1$ have the property that $f$ is irreducible and
$\Z[x]/(f(x))$ is the maximal order in its fraction field. The
quantity
\begin{equation}\label{eqrho}
  \rho_n(p):=1-\frac{1}{p^2}
\end{equation}
represents the density of monic integer polynomials
of degree $n>1$ over $\Z_p$ such that $\Z_p[x]/(f(x))$ is the maximal
order in $\Q_p[x]/(f(x))$.  The determination of this beautiful
$p$-adic density, and its independence of $n$, is due to Hendrik
Lenstra (see~\cite[Proposition~3.5]{ABZ}).  Theorem~\ref{polydiscmax2}
again holds even if we restrict to polynomials of degree $n$ having a
fixed number of real roots.

If the discriminant of an order in a number field is squarefree, then
that order must be maximal.  Thus the irreducible polynomials counted
in Theorem~\ref{polydisc2} are a subset of those counted in
Theorem~\ref{polydiscmax2}.  The additional usefulness of
Theorem~\ref{polydisc2} in some arithmetic applications is that if
$f(x)$ is a monic irreducible integer polynomial of degree $n$ with
squarefree discriminant, then not only is $\Z[x]/(f(x))$ maximal in
the number field $\Q[x]/(f(x))$ but the associated Galois group is
necessarily the symmetric group $S_n$ (see, e.g., \cite{Scholz},
\cite{Yamamura}, \cite{Nakagawa}, \cite{Kondo} for further details and
applications).

We prove both Theorems~\ref{polydisc2} and \ref{polydiscmax2} with
power-saving error terms. More precisely, let $\Vmn(\Z)$ denote
the subset of $\Z[x]$ consisting of all monic integer polynomials of
degree $n$.  Then it is easy to see that
\begin{equation*}
{\#\{f\in \Vmn(\Z):  H(f)<X \}} = 2^nX^{\frac{\scriptstyle n(n+1)}{\scriptstyle2}} + O(X^{\frac{\scriptstyle n(n+1)}{\scriptstyle2}-{ 1}}).
\end{equation*}
We prove
\begin{equation}\label{errorterms}
{\begin{array}{ccl}
\displaystyle
{\#\{f\in \Vmn(\Z) : H(f)<X \mbox{ and $\Delta(f)$ squarefree}\}}
&\!\!=\!\!&  \lambda_n\cdot 2^n{X^\frac{\scriptstyle n(n+1)}{\scriptstyle2}} + O_\varepsilon(X^{\frac{\scriptstyle n(n+1)}{\scriptstyle2}-{\textstyle \frac15}+\varepsilon});\\[.15in]
\displaystyle
{\#\{f\in \Vmn(\Z) : H(f)<X \mbox{ and $\Z[x]/(f(x))$ maximal}\}}&\!\!=\!\!&  {\displaystyle\frac{6}{\pi^2}}\cdot 2^nX^{\frac{\scriptstyle n(n+1)}{\scriptstyle2}} + O_\varepsilon(X^{\frac{\scriptstyle n(n+1)}{\scriptstyle2}-{\textstyle \frac15}+\varepsilon})
\end{array}}
\end{equation}
for $n>1$.

These asymptotics imply Theorems~\ref{polydisc2} and
\ref{polydiscmax2}.  Since it is known that the number of reducible
monic polynomials of a given degree~$n$ is of a strictly smaller order
of magnitude than the error terms above (see Proposition
\ref{propredboundall}), it does not matter whether we require $f$ to
be irreducible in the above asymptotic formulae.

Recall that a number field $K$ is called {\it monogenic} if its ring
of integers is generated over~$\Z$ by one element, i.e., if
$\Z[\theta]$ gives the maximal order of $K$ for some $\theta\in K$.
As a further application of our methods, we obtain the following
corollary to Theorem~\ref{polydisc2}:

\begin{corollary}\label{monogenic}
Let $n>1$.  The number of isomorphism classes of number fields
of degree~$n$ and absolute discriminant less than $X$ that are
monogenic and have associated Galois group $S_n$ is $\gg X^{1/2+1/n}$.
\end{corollary}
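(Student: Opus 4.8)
The plan is to produce the required number fields by counting monic integer polynomials $f(x)=x^n+a_1x^{n-1}+\cdots+a_n$ with squarefree discriminant lying in a suitably thin box, and then to pass from polynomials to fields. Recall that if $\Delta(f)$ is squarefree then $\Z[x]/(f(x))$ is the maximal order in $K_f=\Q[x]/(f(x))$, the field $K_f$ is monogenic with generator the image of $x$, and (by the remark preceding the Corollary) the Galois group of $f$ is $S_n$; moreover in this case $\Delta(f)=\Disc(K_f)$. So every such $f$ gives a monogenic degree-$n$ $S_n$-field, and the only issue is controlling how many $f$ can give the same field.

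First I would choose the box. Instead of ordering by the height $H(f)<X$ used in Theorem~\ref{polydisc2}, I would restrict to polynomials with $|a_i|<Y^{i}$ for all $i$ except that $|a_n| < Y^{?}$ — more precisely, I would take $a_1,\dots,a_{n-1}$ in the ranges $|a_i| \le Y^i$ and let $a_n$ range up to size $\asymp Y^{n+1}$ or so, so that the discriminant, which is a polynomial of degree $2n-2$ in the $a_i$ (and degree $n-1$ in $a_n$), has absolute value bounded by $X$. Concretely, setting things up so that $|\Delta(f)| < X$ forces a box of volume $\asymp X^{1/2+1/n}$: the point is that $\Disc$ has weighted degree $n(n-1)$ when $a_i$ has weight $i$, and one wants the box $\{|a_i|<Y^i\}$ with $Y^{n(n-1)}\asymp X$, i.e. $Y \asymp X^{1/(n(n-1))}$, giving volume $Y^{n(n+1)/2} = X^{(n+1)/(2(n-1))}$ — then I would do a secondary optimization (shrinking some coordinates, enlarging $a_n$) to reach the stated exponent $1/2+1/n$. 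The key analytic input is that a positive proportion (in fact $\gg 1$ of the full count) of polynomials in such a homogeneously-expanding box still have squarefree discriminant: this follows from the proof of Theorem~\ref{polydisc2}, since that argument establishes the squarefree density not just for the height-$H$ box but uniformly over all boxes of the form $\prod_i[-c_iY^i, c_iY^i]$ (the sieve and the geometry-of-numbers estimates only use that the region is a homothetically growing union of such boxes), so the count of $f$ in our chosen box with squarefree $\Delta$ and $|\Delta(f)|<X$ is $\gg X^{1/2+1/n}$.

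Next I would bound the multiplicity. Two monic integer polynomials $f, g$ of degree $n$ define isomorphic number fields $K_f\cong K_g$ (with $\Z[x]/(f)$, $\Z[x]/(g)$ both maximal) precisely when the image $\bar x$ of $x$ in $\OO_{K_f}$ and the corresponding generator for $K_g$ are related by an element of $\OO_{K_f}$ acting by an affine transformation $\theta \mapsto m\theta + b$ with $m=\pm 1$ (since both rings of integers are $\Z[\theta]$, the change-of-generator must preserve the lattice $\Z + \Z\theta + \cdots$, forcing the leading coefficient of the substitution to be a unit up to sign, hence $\pm 1$ as $f,g$ are monic, and $b\in\Z$). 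Thus the fibers of the map $f \mapsto K_f$ over monogenic fields are orbits of the group of translations $x\mapsto x+b$ together with $x \mapsto -x$. Given a field $K$ of absolute discriminant $<X$ with generator $\theta$, the translates $\theta + b$ with $b\in\Z$ that still land in our box number $O(Y^n) = O(X^{1/(n-1)})$ at worst — but one does better: I would use that a monogenic $S_n$-field $K$ of discriminant $D$ has at most $O_\varepsilon(D^\varepsilon)$ essentially distinct generators (power integral bases) up to translation, a classical finiteness result (Győry), which bounds the multiplicity by $X^\varepsilon$. Dividing the lower bound $\gg X^{1/2+1/n}$ for polynomials by this $X^\varepsilon$ multiplicity yields $\gg X^{1/2+1/n-\varepsilon}$ fields; with slightly more care (using that the translation action alone, which is what actually arises, contributes a bounded factor once the box is chosen asymmetrically so that only $O(1)$ translates of any $f$ remain inside) one removes the $\varepsilon$ and gets the clean bound $\gg X^{1/2+1/n}$.

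The main obstacle is the interplay between the two competing demands: making the box thin enough that $|\Delta(f)|<X$ throughout (so that all the fields produced really do have discriminant $<X$), while keeping the box "fat" enough in the homothetic sense that the squarefree-discriminant count from Theorem~\ref{polydisc2} still applies with a positive proportion and gives the full volume order of magnitude — and simultaneously shaping the box so that each polynomial's translation-orbit meets it in $O(1)$ points. Balancing these is exactly the optimization that produces the exponent $1/2 + 1/n$ rather than a smaller one; verifying that the method of Theorem~\ref{polydisc2} is genuinely uniform over the needed family of boxes (in particular that the tail estimate controlling non-squarefree $\Delta$ divisible by $p^2$ for large $p$ survives the change of region) is the one place where one must reinspect the proof rather than cite it as a black box.
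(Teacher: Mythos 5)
Your overall skeleton (count squarefree-discriminant polynomials of height $Y\asymp X^{1/(n(n-1))}$, then bound the fiber of $f\mapsto K_f$) is the same as the paper's, but the step where you control the multiplicity contains a genuine error, and it is exactly the step the paper handles differently. Your structural claim that two monic integer polynomials generating the same ring of integers must be related by an affine substitution $\theta\mapsto\pm\theta+b$ is false: if $\Z[\theta]=\OO_K$, then any $\theta'=h(\theta)$ with $h$ an integer polynomial of degree up to $n-1$ can again satisfy $\Z[\theta']=\OO_K$, and in general there are several generators that are not translates of one another. Your fallback, Gy\H{o}ry's finiteness of power integral bases with a bound $O_\varepsilon(D^\varepsilon)$, then only yields $\gg X^{1/2+1/n-\varepsilon}$, and your proposed removal of the $\varepsilon$ rests on the same false claim (``the translation action alone, which is what actually arises''). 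This route can be repaired, but only by invoking the stronger Evertse--Gy\H{o}ry theorem that the number of generators of $\OO_K$ up to equivalence is bounded by a constant depending on $n$ alone; you do not do this. The paper avoids Diophantine finiteness results entirely: it proves (Lemmas~\ref{reducedg} and \ref{mostqr}) that $100\%$ of polynomials are ``strongly quasi-reduced'' and that distinct strongly quasi-reduced polynomials with vanishing $x^{n-1}$-coefficient give non-isomorphic rings, via uniqueness of Minkowski-reduced bases of the lattice $\Z[\theta]$.

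There is also a bookkeeping problem in your choice of box. Enlarging $a_n$ to size $Y^{n+1}$ is incompatible with $|\Delta(f)|<X$ throughout the box, since the discriminant is generically of size $|a_n|^{n-1}$ in that range; no such ``secondary optimization'' is available. The correct device, and the one the paper uses, is to remove the translation redundancy at the level of the count by restricting to $a_1=0$ (or $0\le a_1<n$), which gives exactly $\asymp Y^{2+3+\cdots+n}=X^{1/2+1/n}$ polynomials of height $<Y$, each of absolute discriminant $\ll X$. This in turn requires a squarefree-density statement for the sliced family with $a_1=0$ (the paper's Theorem~\ref{vanishinga1}, proved by rerunning the argument with $W$ replaced by the anti-trace-zero subspace); your appeal to uniformity of Theorem~\ref{polydisc2} over arbitrary weighted boxes gestures at this but is not established by the theorem as stated, as you yourself flag. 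So the two genuine gaps are: the multiplicity bound (wrong structural claim, and an $\varepsilon$-loss not actually removed), and the box optimization (the proposed enlargement of $a_n$ fails; the needed input is the $a_1=0$ variant of the density theorem together with an injectivity statement such as the paper's quasi-reduction lemma).
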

We note that our lower bound for the number of monogenic $S_n$-number
fields of degree $n$ improves slightly the best-known lower bounds for
the number of $S_n$-number fields of degree $n$, due to Ellenberg and
Venkatesh~\cite[Theorem 1.1]{EV}, by simply forgetting the
monogenicity condition in Corollary~\ref{monogenic}.  We conjecture
that the exponent in our lower bound in Corollary~\ref{monogenic} for
monogenic number fields of degree $n$ is optimal.

As is illustrated by Corollary~\ref{monogenic},
Theorems~\ref{polydisc2} and \ref{polydiscmax2} give a powerful method
to produce number fields of a given degree having given properties or
invariants.  We give one further example of interest.  Given a number
field $K$ of degree $n$ with $r$ real embeddings $\xi_1,\dots,\xi_r$
and $s$ complex conjugate pairs of complex embeddings
$\xi_{r+1},\bar\xi_{r+1},\ldots,\xi_{r+s},\bar\xi_{r+s}$, the ring of
integers $\mathcal O_K$ may naturally be viewed as a lattice in $\R^n$
via the map $x\mapsto (\xi_1(x),\ldots,\xi_{r+s}(x))\in
\R^r\times\C^s\cong \R^n$.  We may thus ask about the length of the
shortest vector in this lattice generating $K$.

In their final remark~\cite[Remark~3.3]{EV}, Ellenberg and Venkatesh
conjecture that the number of number fields $K$ of degree $n$ whose
shortest vector in $\OO_K$ generating $K$ is of length less than~$Y$
is $\,\asymp Y^{(n-1)(n+2)/2}$. They prove an upper bound of this
order of magnitude.  We use Theorem~\ref{polydiscmax2} to prove also a
lower bound of this size, thereby proving their conjecture:

\begin{corollary}\label{shortvector}
Let $n>1$.  The number of isomorphism classes of number fields $K$ of
degree~$n$ whose shortest vector in
$\OO_K$ generating $K$ has length less than $Y$ is $\,\asymp$
$Y^{(n-1)(n+2)/2}$.  The same is true if we further impose the
condition that the Galois group of the normal closure of $K$ is $S_n$.
\end{corollary}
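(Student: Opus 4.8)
Ellenberg and Venkatesh \cite{EV} have already proved the upper bound $\ll Y^{(n-1)(n+2)/2}$, so it remains to establish the matching lower bound. The plan is to manufacture $\gg Y^{(n-1)(n+2)/2}$ degree-$n$ number fields $K$, each carrying an element of $\OO_K$ that generates $K$ and has length $O_n(Y)$ under the canonical embedding, and then to bound how many times a single field can be produced in this way.

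The first step is to work with \emph{traceless} monic integer polynomials $f(x)=x^n+a_2x^{n-2}+\cdots+a_n$, for which $\theta_f:=x\bmod f(x)$ has trace~$0$. The number of such $f$ with $H(f)<Y$ is $\asymp Y^{2+3+\cdots+n}=Y^{(n-1)(n+2)/2}$. Since the sieve proving~\eqref{errorterms} is built from local conditions, it adapts to the codimension-one family $a_1=0$ and shows that a positive proportion of these polynomials have $\Z[x]/(f(x))$ equal to the ring of integers of $K_f$; moreover, by the reducibility bound of Proposition~\ref{propredboundall} (which likewise applies in this family), all but a negligible fraction are irreducible. Thus one obtains $\gg_n Y^{(n-1)(n+2)/2}$ monic \emph{irreducible} integer polynomials $f$ with $a_1=0$, $H(f)<Y$, and $\Z[\theta_f]=\OO_{K_f}$.

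Next, fix such an $f$. Because $\Z[\theta_f]$ is the full ring of integers, $\theta_f\in\OO_{K_f}$ and $\Q(\theta_f)=K_f$. The archimedean conjugates of $\theta_f$ are the complex roots of $f$, and from $|a_i|<Y^i$ together with $\alpha^n=-(a_1\alpha^{n-1}+\cdots+a_n)$ one sees that $|\alpha|\geq 2Y$ is impossible, so every root satisfies $|\alpha|<2Y$. Hence, regarding $\OO_{K_f}$ as a lattice in $\R^r\times\C^s\cong\R^n$ via the canonical embedding, $\theta_f$ has length $<C_nY$ for a constant $C_n$ depending only on $n$. Therefore $K_f$ has an element of $\OO_{K_f}$ that generates $K_f$ and has length $<C_nY$, so $K_f$ is one of the fields counted in Corollary~\ref{shortvector} with $Y$ replaced by $C_nY$.

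The final step is the multiplicity bound: for a fixed degree-$n$ field $K$, the number of $f$ in the family above with $K_f\cong K$ is at most the number of traceless generators of the maximal order $\OO_K$ (an $f$ being recovered from such a generator as its minimal polynomial). A traceless generator lies in an equivalence class of generators of $\OO_K$ under $\theta\sim\pm\theta+a$ ($a\in\Z$), each such class contains at most two traceless elements, and --- by a uniform version of the finiteness theorems for index-form (discriminant) equations in the style of Győry and Evertse--Győry --- the number of such classes is bounded by a constant $c(n)$ depending only on $n$. Hence each field is produced at most $2c(n)=O_n(1)$ times; dividing the polynomial count by this and rescaling $Y$ gives the lower bound $\gg_n Y^{(n-1)(n+2)/2}$, which combined with the Ellenberg--Venkatesh upper bound yields the asserted $\asymp$. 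The main obstacle is precisely this last step: one needs the number of inequivalent monogenic generators of a degree-$n$ order to be bounded \emph{independently of the order}, since a weaker bound such as $O_\varepsilon(Y^\varepsilon)$ would only give the lower bound up to a factor $Y^\varepsilon$ and hence not the exact order of magnitude. A secondary, routine point is checking that the power-saving sieve of~\eqref{errorterms} survives restriction to the thin family $\{a_1=0\}$.
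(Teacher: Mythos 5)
Your proposal is correct in substance but resolves the crucial injectivity/multiplicity issue by a genuinely different route than the paper. The paper never needs any bound on the number of generators of a fixed order: it introduces the notion of \emph{strongly quasi-reduced} polynomials, shows via an Iwasawa/Minkowski-reduction argument (Lemmas~\ref{quasilemma} and \ref{mostqr}) that $100\%$ of monic polynomials are strongly quasi-reduced, and then proves (Lemma~\ref{reducedg}) that two \emph{distinct} strongly quasi-reduced traceless polynomials yield non-isomorphic rings $\Z[x]/(f)$; combined with Theorem~\ref{vanishinga1} (the sieve restricted to the $a_1=0$ family, which is exactly the ``routine point'' you flag, and which the paper justifies by replacing $W$, $W_0$, $W_{00}$ by their anti-trace-zero subspaces) and Fujiwara's root bound, this gives an essentially injective map from polynomials to fields, so no division by a multiplicity is ever needed. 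You instead divide the polynomial count by a multiplicity bound, reducing to the statement that the number of generators of $\OO_K$ up to $\theta\mapsto\pm\theta+a$ is at most $c(n)$ uniformly in $K$; this is not merely ``in the style of'' Gy\H{o}ry and Evertse--Gy\H{o}ry but is an actual theorem of theirs (uniform bounds for index-form/unit equations, with the constant depending only on $n$ since the unit rank of the normal closure is bounded in terms of $n$), and your observation that an $O_\varepsilon(Y^\varepsilon)$-type bound would not suffice is exactly right. So your argument is valid, at the cost of importing a deep Diophantine input where the paper uses an elementary, self-contained lattice-reduction device --- a device the paper also reuses to prove Corollary~\ref{monogenic}, which your approach would not directly give without further care (there one wants distinct monogenic fields, which the paper again extracts from Lemma~\ref{reducedg}). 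One cosmetic difference: you sieve for maximality of $\Z[x]/(f)$ in the traceless family, while the paper sieves for squarefree discriminant (Theorem~\ref{vanishinga1}); either suffices for this corollary, the paper's choice additionally forcing Galois group $S_n$.
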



Finally, we remark that our methods allow the analogues of all of the
above results to be proven with any finite set of local conditions
imposed at finitely many places (including at infinity); the orders of
magnitudes in these theorems are then seen to remain the same---with
different (but easily computable in the cases of
Theorems~\ref{polydisc2} and \ref{polydiscmax2}) positive
constants---provided that no local conditions are imposed that force
the set being counted to be empty (i.e., no local conditions are
imposed at~$p$ in Theorem~\ref{polydisc2} that force $p^2$ to divide
the discriminant, no local conditions are imposed at~$p$ in
Theorem~\ref{polydiscmax2} that cause $\Z_p[x]/(f(x))$ to be
non-maximal over~$\Z_p$, and no local conditions are imposed at $p$ in
Corollary~\ref{monogenic} that cause such number fields to be
non-monogenic locally). In fact, we can even impose certain infinite
sets of local conditions (see Theorem \ref{thmaingencong}).

\vspace{.1in} We now briefly describe our methods.  It is easily seen
that the desired densities in Theorems~\ref{polydisc2} and
\ref{polydiscmax2}, if they exist, must be bounded above by the Euler
products $\prod_p \lambda_n(p)$ and $\prod_p (1-1/p^2)$,
respectively. The difficulty is to show that these Euler products are
also the correct lower bounds.  As is standard in sieve theory, to
demonstrate the lower bound, a ``tail estimate'' is required to show
that not too many discriminants of polynomials $f$ are divisible by
$p^2$ when $p$ is large relative to the discriminant $\Delta(f)$ of
$f$ (here, large means larger than $\Delta(f)^{1/(n-1)}$, say).

For any prime $p$, and a monic integer polynomial $f$ of degree $n$
such that $p^2\mid \Delta(f)$, we say that $p^2$ {\it strongly
  divides} $\Delta(f)$ if $p^2\mid \Delta(f + pg)$ for any integer
polynomial $g$ of degree $n$; otherwise, we say that $p^2$ {\it weakly
  divides} $\Delta(f)$. Then $p^2$ strongly divides $\Delta(f)$ if and
only if $f$ modulo $p$ has at least two distinct multiple roots in
$\bar{\F}_p$, or has a root in $\F_p$ of multiplicity at least 3; and
$p^2$ weakly divides $\Delta(f)$ if $p^2\mid \Delta(f)$ but $f$ modulo
$p$ has only one multiple root in $\F_p$ and this root is a simple
double root.

For any squarefree positive integer $m$, let $\U_m^\s$
(resp.\ $\U_m^\w$) denote the set of monic integer polynomials in
$\Vmn(\Z)$ whose discriminant is strongly divisible (resp.\ weakly
divisible) by $p^2$ for every prime factor $p$ of $m$.  Then we prove
tail estimates for $\U_m^\s$ and $\U_m^\w$ separately, as follows.

\begin{theorem}\label{thm:mainestimate}
For any positive real number $M$ and any $\epsilon>0$, we have

\vspace{-5pt}\begin{eqnarray*}
\label{eq:equs}
{\rm (a)}\quad \#\bigcup_{\substack{m>M\\ m\;\mathrm{ squarefree}
 }}\{f\in\U_m^\s:H(f)<X\}&=&
O_\epsilon(X^{n(n+1)/2+\epsilon}/M)+O(X^{n(n+1)/2-1});\\[.075in]
\label{equ1}
{\rm (b)}\quad
\#\bigcup_{\substack{m>M\\
m\;\mathrm{ squarefree}
}}\{f\in\U_m^\w:H(f)<X\}&=&
O_\epsilon(X^{n(n+1)/2+\epsilon}/M)+O_\epsilon(X^{n(n+1)/2-1/5+\epsilon}),
\end{eqnarray*}
where the implied constants are independent of $M$ and $X$.
\end{theorem}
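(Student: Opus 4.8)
The plan is to bound each union in Theorem~\ref{thm:mainestimate} by the sum $\sum_{m>M}\#\{f\in\U_m^{\bullet}:H(f)<X\}$ (a union bound, which suffices for an upper estimate) and to evaluate these sums by dissecting the range of $m$ at suitable powers of $X$. For small moduli one uses the standard lattice-point count: the number of $f\in\Vmn(\Z)$ with $H(f)<X$ lying in a fixed residue class modulo $q$ is $\prod_{i=1}^{n}(2X^{i})/q^{n}+O\big(X^{n(n+1)/2-1}/q^{n-1}\big)$, and the error term is harmless as long as $q$ does not exceed the shortest side $\asymp X$ of the box $\prod_{i=1}^{n}[-X^{i},X^{i}]$. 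I take $m$ odd throughout; the prime $2$ is handled separately, as in Brakenhoff's computation of $\lambda_n(2)$.

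\emph{Part (a): strong divisibility.} Since $p^{2}$ strongly divides $\Delta(f)$ exactly when the reduction $\bar f\in\F_p[x]$ has two distinct multiple roots or a root of multiplicity $\ge 3$ --- the singular locus of the discriminant hypersurface, which has codimension $2$ --- the set $\U_m^\s$ is a union of residue classes modulo $m$, and the number of such classes is $O_n(p^{n-2})$ for each $p\mid m$, hence $O_{n,\epsilon}(m^{n-2+\epsilon})$ in all. Running the lattice-point count over $M<m\le X$ produces $O_\epsilon(X^{n(n+1)/2+\epsilon}/M)$ from the main terms and $O_\epsilon(X^{n(n+1)/2-1+\epsilon})$ from the error terms, the latter being dominated by the former when $M<X$ and vacuous otherwise. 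For $m>X$ the box no longer meets every class; there one uses that $\U_m^\s\cap\{H(f)<X\}$ is empty once $m^{2}>\max_{H(f)<X}|\Delta(f)|\ll_n X^{n(n-1)}$, and that for $m>X$ the reduction of $f$ modulo the primes dividing $m$ is so degenerate --- $\gcd(f,f')$ has degree $\ge 2$ modulo each such prime, while $\gcd(f,f')=1$ over $\Q$ when $\Delta(f)\neq0$ --- that, after accounting for the $O(X^{n(n-1)/2})$ polynomials with $\Delta(f)=0$ and invoking the divisor estimate together with Proposition~\ref{propredboundall}, the remaining contribution is $O(X^{n(n-1)/2})$. This yields the bound in~(a).

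\emph{Part (b): weak divisibility.} If $p^{2}$ weakly divides $\Delta(f)$, then $f$ has a simple double root at some $r_p\in\F_p$, and one checks (via the degree-$2$ $p$-adic factor of $f$, whose discriminant $b^{2}-4c$ is $\equiv 0\bmod p^{2}$, hence congruent to a perfect square modulo $p^{2}$) that $p^{2}\mid f(r_p)$; the residue $f(r_p)\bmod p^{2}$ is independent of the lift of $r_p$ because $p\mid f'(r_p)$. Gluing over $p\mid m$ gives the inclusion $\U_m^\w\subseteq\{f:\exists\,r\in[0,m),\ m\mid f'(r)\text{ and }m^{2}\mid f(r)\}$, so that $\#\{f\in\U_m^\w:H(f)<X\}\le\sum_{r=0}^{m-1}\#\{f:H(f)<X,\ m\mid f'(r),\ m^{2}\mid f(r)\}$. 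Since $f'(r)$ and $f(r)$ are \emph{linear} in the coefficients $(a_1,\dots,a_n)$, I would open the two congruences with additive characters of moduli $m$ and $m^{2}$ to obtain, for each $r$,
\[
\#\{f:H(f)<X,\ m\mid f'(r),\ m^{2}\mid f(r)\}=\frac{1}{m^{3}}\sum_{u\bmod m}\,\sum_{v\bmod m^{2}}e_{m^{2}}(c_{u,v,r})\prod_{i=1}^{n}S_i(u,v,r),
\]
where $|c_{u,v,r}|$ is irrelevant and $S_i(u,v,r)=\sum_{|a_i|<X^{i}}e_{m^{2}}(\mu_i a_i)\ll\min\big(X^{i},\ \|\mu_i/m^{2}\|^{-1}\big)$ (with $\|\cdot\|$ the distance to the nearest integer), $\mu_i=mu(n-i)r^{n-1-i}+vr^{n-i}$ for $i<n$ and $\mu_n=v$. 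The term $(u,v)=(0,0)$ gives the main term $\asymp X^{n(n+1)/2}/m^{3}$ per $r$, hence $\asymp X^{n(n+1)/2}/m^{2}$ after summing over $r$, hence $O_\epsilon(X^{n(n+1)/2+\epsilon}/M)$ after summing over $m>M$. For $m\le X^{1/2}$ the remaining frequencies contribute $O_\epsilon(X^{n(n+1)/2-1+\epsilon})$ per~$m$, hence $O_\epsilon(X^{n(n+1)/2-1/2+\epsilon})$ in all.

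\emph{The main estimate.} The crux is the range $m>X^{1/2}$, where $m^{2}$ exceeds the shortest side of the box and the naive count fails: here one must show that
\[
\sum_{m>X^{1/2}}\frac{1}{m^{3}}\sum_{r=0}^{m-1}\ \sum_{(u,v)\neq(0,0)}\ \prod_{i=1}^{n}\min\big(X^{i},\ \|\mu_i/m^{2}\|^{-1}\big)\ =\ O_\epsilon\big(X^{n(n+1)/2-1/5+\epsilon}\big).
\]
The dominant contributions come from the loci where many of the $\mu_i$ are $\equiv 0\pmod{m^{2}}$: small $v$ forces the $a_n$-factor to be large, and $r\equiv 0$ makes all of $\mu_1,\dots,\mu_n$ vanish simultaneously, but the latter forces $f$ to be divisible by $x-r$ (hence reducible) once one uses that every complex root of $f$ has absolute value $\ll H(f)$ --- these pieces I would excise and bound using Proposition~\ref{propredboundall} and the divisor estimate. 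On the complementary range the sum over $r$, in which each $\mu_i$ is a quadratic polynomial in $r$, exhibits genuine cancellation of Gauss-sum type (of size $\ll_\epsilon m^{1+\epsilon}$), and after carefully dissecting the range of $m$ and balancing these savings against the decay of the character sums $S_i$ one arrives at the exponent $\tfrac15$. The main obstacle is precisely this last step: extracting \emph{any} power saving for $m>X^{1/2}$ requires the exponential-sum analysis above, and it is the delicate interplay of the Gauss-sum cancellation, the decay of the $S_i$, and the degenerate loci --- rather than any single clean estimate --- that pins down the value $\tfrac15$, uniformly in $n$. Part~(a), by contrast, is essentially a congruence count and poses no comparable difficulty.
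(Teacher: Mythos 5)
The central difficulty of this theorem is part (b) in the range of large $m$, and that is exactly where your proposal stops being a proof. After opening the congruences $m\mid f'(r)$, $m^2\mid f(r)$ with additive characters and bounding each coefficient sum by $\min\bigl(X^i,\|\mu_i/m^2\|^{-1}\bigr)$, you are left with a sum of \emph{nonnegative} terms over $r$ and $(u,v)$; at that point there is no oscillation left, so the claimed ``genuine cancellation of Gauss-sum type'' in the $r$-sum cannot be extracted from the expression you wrote down (any cancellation would have to be exploited before taking absolute values, via some Poisson/Kloosterman-type rearrangement that you do not supply). Moreover $\mu_i=mu(n-i)r^{n-1-i}+vr^{n-i}$ has degree $n-i$ in $r$, not $2$, and a bound of size $\ll_\epsilon m^{1+\epsilon}$ for a sum of $m$ unimodular terms is trivial and saves nothing. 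Most importantly, you never carry out the ``balancing'' that is supposed to produce the exponent $\tfrac15$; you assert that a delicate interplay pins it down, uniformly in $n$, but this assertion is the theorem. In the paper the saving in this range comes from an entirely different mechanism: each $f\in\U_m^\w$ is lifted via an explicit matrix $\sigma_m(f)$ to the space of symmetric $n\times n$ matrices with $f_{\sigma_m(f)}=f$ and $|Q(\sigma_m(f))|=m$, and one counts $G(\Z)$-orbits of such distinguished elements in the cusp of a fundamental domain by geometry of numbers, the negative $T$-weight of the $Q$-invariant trading against the cusp expansion to give the $1/M$ saving (Proposition~\ref{proplargeQbound}), while the $X^{-1/5}$ comes from a Selberg-sieve bound on distinguished elements with $b_{11}\neq0$ as in \cite{ShTs} (Proposition~\ref{propodd}), not from any exponential-sum numerology. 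No elementary character-sum argument of the kind you sketch is known to give a power saving here, which is precisely why the paper introduces the $Q$-invariant machinery.

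Your part (a) is closer in spirit to the paper, which invokes the quantitative Ekedahl/geometric sieve of \cite{geosieve}: the congruence count for $M<m\le X$ is fine, but your treatment of $m>X$ is also too vague --- the appeal to Proposition~\ref{propredboundall} (which bounds reducible polynomials) is not the relevant input, and the passage from ``$\gcd(f,f')$ has degree $\ge2$ modulo every prime dividing $m$'' to the bound $O(X^{n(n-1)/2})$ is exactly the fibering argument of the geometric sieve that needs to be carried out rather than asserted. So part (a) is an acceptable outline modulo citing or reproving that sieve, but part (b) has a genuine gap at its only hard step.
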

\noindent To prove our main theorems, we will use Theorem
\ref{thm:mainestimate} with $M=X^{1/2}$.

The power savings in the error terms above also have applications towards
determining the distributions of low-lying zeros in families of
Dedekind zeta functions of monogenic degree-$n$ fields; see \cite[\S5.2]{SST}.

We prove the estimate in the strongly divisible case (a) of
Theorem~\ref{thm:mainestimate} by geometric techniques, namely, a
quantitative version of the Ekedahl sieve (\cite{Ek}, \cite[Theorem
  3.3]{geosieve}). While the proof of \cite[Theorem~3.3]{geosieve}
uses homogeneous heights, and considers the union over all primes
$p>M$, the same proof also applies in our case of weighted homogeneous
heights, and a union over all squarefree $m>M$. Since the last
coefficient $a_n$ is in a larger range than the other coefficients, we
in fact obtain a smaller error term than in \cite[Theorem~3.3]{geosieve}.

The estimate in the weakly divisible case (b) of
Theorem~\ref{thm:mainestimate} is considerably more difficult. Our
main idea is to embed polynomials $f$, whose discriminant is {\it weakly}
divisible by $p^2$, into a larger space that has more symmetry, such
that the invariants under this symmetry are given exactly by the
coefficients of $f$; moreover, we arrange for the image of $f$ in the
bigger space to have discriminant {\it strongly} divisible by $p^2$.  We
then count in the bigger space.

More precisely, we make use of the representation of $G=\SO_n$ on the
space $W=W_n$ of symmetric $n\times n$ matrices, as studied in
\cite{BG2,SW}. We fix $A_0$ to be the $n\times n$ symmetric matrix
with $1$'s on the anti-diagonal and $0$'s elsewhere. The group
$G=\SO(A_0)$ acts on $W$ via the action $g\cdot B=gBg^t$ for $g\in G$
and $B\in W$.  Define the {\it invariant polynomial} of an element
$B\in W$ by $$f_B(x) = (-1)^{n(n-1)/2}\det(A_0x - B).$$ Then $f_B$ is
a monic polynomial of degree~$n$. It is known (see \cite[\S 4]{AITBG})
that the ring of polynomial invariants for the action of $G$ on $W$ is
freely generated by the coefficients of the invariant
polynomial. Define the {\it discriminant} $\Delta(B)$ and {\it height}
$H(B)$ of an element $B\in W$ by $\Delta(B)=\Delta(f_B)$ and
$H(B)=H(f_B)$.  This representation of $G$ on $W$ was used in
\cite{BG2,SW} to study 2-descent on the hyperelliptic curves
$C:y^2=f_B(x)$.

A key step of our proof of Theorem~\ref{thm:mainestimate}(b) is the
construction, for every positive squarefree integer $m$, of a map
\begin{equation*}
\sigma_m:\U_m^\w\to \frac14W(\Z),
\end{equation*}
such that $f_{\sigma_m(f)}=f$ for every $f\in \U_m^\w$; here
$\frac14W(\Z)\subset W(\Q)$ is the lattice of elements $B$ whose
coefficients have denominators dividing $4$.  In our construction, the
image of $\sigma_m$ in fact lies in a special subspace $W_0$ of $W$;
namely, if $n=2g+1$ is odd, then $W_0$ consists of symmetric matrices
$B\in W$ whose top left $g\times g$ block is 0, and if $n=2g+2$ is
even, then $W_0$ consists of symmetric matrices $B\in W$ whose top
left $g\times (g+1)$ block is 0. We associate to any element of $W_0$
a further polynomial invariant which we call the $Q$-{\it invariant}
(which is a relative invariant for the subgroup of $\SO(A_0)$ that
fixes $W_0$).
Next, we show that for elements $B$ in the image of $\sigma_m$, we
have $|Q(B)|=m$.
Finally, {even though the discriminant polynomial of $f\in\U_m^\w$ is
  {\it weakly} divisible by $p^2$, the discriminant polynomial of its
  image $\sigma_m(f)$, when viewed as a polynomial on $W_0\cap
  \frac14W(\Z)$, is {\it strongly} divisible by $p^2$.}  This is the
key point of our construction.

To obtain Theorem~\ref{thm:mainestimate}(b), it thus suffices to
estimate the number of $G(\Z)$-equivalence classes of elements $B\in
W_0\cap \frac14W(\Z)$ of height less than $X$ having $Q$-invariant
larger than $M$.  This can be reduced to a geometry-of-numbers
argument in the spirit of \cite{BG2,SW}, although the current count is
more subtle in that we are counting certain elements in a cuspidal
region of a fundamental domain for the action of $G(\Z)$ on $W(\R)$.
The $G(\Q)$-orbits of elements $B\in W_0\cap W(\Q)$ are called {\it
  distinguished orbits} in \cite{BG2,SW}, as they correspond to the
identity 2-Selmer elements of the Jacobians of the corresponding
hyperelliptic curves $y^2=f_B(x)$ over $\Q$; these were not counted
separately by the geometry-of-numbers methods of \cite{BG2,SW}, as
these elements lie deeper in the cusps of the fundamental domains.  We
develop a method to count those elements in the cusp having bounded height and $Q$-invariant larger than $M$, following
the arguments of \cite{BG2,SW} while using the invariance and
algebraic properties of the $Q$-invariant polynomial.  This yields
Theorem~\ref{thm:mainestimate}(b), which then allows us to carry out
the sieves required to obtain Theorems~\ref{polydisc2} and
\ref{polydiscmax2}.

Corollary~\ref{monogenic} can be deduced from
Theorem~\ref{polydisc2} roughly as follows. Let $g\in \Vmn(\R)$ be
a monic real polynomial of degree~$n$ and nonzero discriminant having
$r$ real roots and $2s$ complex roots.  Then $\R[x]/(g(x))$ is
isomorphic to $\R^n\cong \R^r\times \C^s$ via its real and complex
embeddings.  Let $\theta$ denote the image of $x$ in $\R[x]/(g(x))$
and let $R_g$ denote the lattice formed by taking the $\Z$-span of
$1,\theta,\ldots,\theta^{n-1}$. Suppose further that there exist
monic integer polynomials $h_i$ of degree $i$ for $i=1,\ldots,n-1$
such that $1,h_1(\theta),h_2(\theta),\ldots,h_{n-1}(\theta)$ is the
unique Minkowski-reduced basis of $R_g$; we say that the polynomial
$g(x)$ is {\it strongly quasi-reduced} in this case. Note that if $g$
is an integer polynomial, then the lattice $R_g$ is simply the image
of the ring $\Z[x]/(g(x))\subset \R[x]/(g(x))$ in $\R^n$ via its
archimedean embeddings.

  When ordered by their heights, we prove that 100\% of monic integer
  polynomials $g(x)$ are strongly quasi-reduced. We furthermore prove
  that two distinct strongly quasi-reduced integer polynomials $g(x)$
  and $g^\ast(x)$ of degree~$n$ with vanishing $x^{n-1}$-term
  necessarily yield non-isomorphic rings $R_g$ and $R_{g^\ast}$.  The
  proof of the positive density result of Theorem~\ref{polydisc2} then
  produces $\gg X^{1/2+1/n}$ strongly quasi-reduced monic integer
  polynomials $g(x)$ of degree~$n$ having vanishing $x^{n-1}$-term,
  squarefree discriminant, and height less than $X^{1/(n(n-1))}$.
  These therefore correspond to $\gg X^{1/2+1/n}$ non-isomorphic
  monogenic rings of integers in $S_n$-number fields of degree $n$
  having absolute discriminant less than~$X$, and
  Corollary~\ref{monogenic} follows.

A similar argument proves Corollary~\ref{shortvector}. Suppose $f(x)$
is a strongly quasi-reduced irreducible monic integer polynomial of
degree $n$ with squarefree discriminant $\Delta(f)$.  Elementary
estimates show that if $H(f)<Y$, then $\|\theta\|\ll Y$, and so the
shortest vector in the ring of integers generating the field also has
length bounded by $O(Y)$. The above-mentioned result on the number of
strongly quasi-reduced irreducible monic integer polynomial of degree
$n$ with squarefree discriminant, vanishing $x^{n-1}$-coefficient, and
height bounded by $Y$ then gives the desired lower bound of
$\gg Y^{(n-1)(n+2)/2}.$ We give full proofs of Corollaries \ref{monogenic} and
\ref{shortvector} in Section~5.

In a subsequent paper \cite{SqSieve2} (Part II), we prove the
corresponding results for non-monic integer polynomials of degree $n$
ordered by the maximum of the absolute values of the coefficients.
Namely, we determine the density of such polynomials 
having squarefree discriminant and the density corresponding to maximal orders. The
treatment of non-monic integer polynomials in Part II builds on the
ideas here, but involves a number of new ideas due to the fact that
there exist non-monic integer polynomials $f(x)$ of degree $n$ for
which there are no symmetric integer matrices $A$ and $B$ such that
$f(x) = \det(Ax-B)$!  (See \cite{BGWhyper}.)  This
complication requires additional methods to adapt the proof to the
non-monic case.  The non-monic case has a number of applications as
well, including new results towards counting number fields and also to
the resolution of a conjecture of Poonen \cite{PBertini} on an
arithmetic Bertini theorem for the projective line. The main
theorems of this paper are the analogous results for the affine
line. The results in \cite{PBertini}, building on work of
Granville~\cite{Gabc}, imply versions of Theorems \ref{polydisc2} and
\ref{polydiscmax2} conditional on the ABC Conjecture.

\vspace{.1in} 
This paper is organized as follows. In Sections~\ref{sec:monicodd} and~\ref{sec:moniceven}, we begin by
collecting some algebraic facts about the representation $2\otimes
g\otimes(g+1)$ of $\SL_2\times\GL_g\times\GL_{g+1}$ and we define the
$Q$-invariant, which is a relative polynomial invariant for this
action.  We
then apply geometry-of-numbers techniques as described above to prove
the critical estimates of Theorem~\ref{thm:mainestimate}, handling the cases of $n$~odd and $n$~even separately. In
Section~\ref{sec:sieve}, we then show how our main theorems, Theorems~\ref{polydisc2} and \ref{polydiscmax2}, can be deduced from
Theorem~\ref{thm:mainestimate}.  Finally, in Section~\ref{latticearg},
we prove Corollary~\ref{monogenic} on the number of monogenic
$S_n$-number fields of degree~$n$ having bounded absolute
discriminant, as well as Corollary~\ref{shortvector} on the number of
rings of integers in number fields of degree $n$ whose shortest vector
generating the number field is of bounded length.

\section{A uniformity estimate for odd degree monic polynomials}\label{sec:monicodd}

In this section, we prove the estimate of
Theorem~\ref{thm:mainestimate}(b) when $n=2g+1$ is odd, for any $g\geq
1$.

\subsection{Invariant theory for the fundamental representation:
  $\SO_n$ on the space $W$ of symmetric $n\times n$ matrices}

Let $A_0$ denote the $n\times n$ symmetric matrix with $1$'s on the
anti-diagonal and $0$'s elsewhere. The group $G=\SO(A_0)$ acts on $W$
via the action
\begin{equation*}
\gamma\cdot B=\gamma B\gamma^t.
\end{equation*}

We recall some of the arithmetic invariant theory for the
representation $W$ of $n\times n$ symmetric matrices of the split
orthogonal group $G$; see \cite{BG2} for more details.  The ring of
polynomial invariants for the action of $G(\C)$ on $W(\C)$ is freely
generated by the coefficients of the {\it invariant polynomial
  $f_B(x)$ of $B$}, defined by $$f_B(x):=(-1)^{g}\det(A_0x-B)$$ (see
\cite[\S4]{AITBG}). We define the {\it discriminant} $\Delta$ on $W$
by $\Delta(B)=\Delta(f_B)$, and the $G(\R)$-invariant {\it height} of
elements in $W(\R)$ by $H(B)=H(f_B).$

Let $k$ be any field of characteristic not $2$.
For a monic polynomial $f(x)\in k[x]$ of degree~$n$ such that
$\Delta(f)\neq0$, let $C_f$ denote the smooth hyperelliptic curve
$y^2=f(x)$ of genus $g$ and let $J_f$ denote the Jacobian of
$C_f$. Then $C_f$ has a rational Weierstrass point at infinity.
The stabilizer of an element $B\in W(k)$ with invariant polynomial
$f(x)$ is naturally isomorphic to $J_f[2](k)$
by~\cite[Proposition~5.1]{BG2}, and hence has cardinality at most
$\#J_f[2](\bar k)=2^{2g}$, where $\bar k$ denotes a separable closure
of $k$.

We say that an element (or the $G(k)$-orbit of an element) $B\in W(k)$ with $\Delta(B)\neq 0$
is {\it $k$-distinguished} if there exists a $g$-dimensional subspace
defined over $k$ that is isotropic with respect to both $A_0$ and~$B$. If $B$ is $k$-distinguished, then the set of these $g$-dimensional
subspaces over $k$ is in bijection with $J_f[2](k)$
by~\cite[Proposition~4.1]{BG2}, and so it too has cardinality at most
$2^{2g}$.

In fact, it is known (see \cite[Proposition~5.1]{BG2}) that the
elements of $J_f[2](k)$ are in natural bijection with the even-degree
factors of $f$ defined over $k$.  (Note that the number of even-degree
factors of $f$ over $\bar k$ is indeed $2^{2g}$.) In particular, if
$f$ is irreducible over $k$, then the group $J_f[2](k)$ is trivial.

Now let $W_0$ be the subspace of $W$ consisting of matrices whose top left
$g\times g$ block is zero. Then elements $B$ in $W_0(k)$ with nonzero
discriminant are all evidently $k$-distinguished since the $g$-dimensional subspace
$Y_g$ spanned by the first $g$ basis vectors is isotropic with respect
to both $A_0$ and $B$.
Let $G_0$ denote the subgroup of $G$
consisting of elements $\gamma$ such that $\gamma^t$ preserves
$Y_g$. Then $G_0$ acts on $W_0$.

An element $\gamma\in G_0$ has the block matrix form
\begin{equation}\label{eq:G_0}
\gamma=\Bigl(\begin{array}{cc}\gamma_1 & 0\\ \delta & \gamma_2
\end{array}\Bigr)\in\Bigl(\begin{array}{cc}M_{g\times g} & 0\\ M_{(g+1)\times g} & M_{(g+1)\times (g+1)}
\end{array}\Bigr),
\end{equation}
and so $\gamma\in G_0$ transforms the top right $g\times (g+1)$ block
of an element $B\in W_0$ as follows:
$$(\gamma\cdot B)^\sub = \gamma_1B^\sub\gamma_2^t,$$ where we use the
superscript ``top'' to denote the top right $g\times (g+1)$ block of
any given element in $W_0$.  It will be convenient for us to view
$(A_0^\sub,B^\sub)$ as an element of the representation $V_g=2\otimes
g\otimes (g+1)$ of the group $H_g:=\SL_2\times\GL_g\times\GL_{g+1}$.
We have a map $\theta:G_0\to H_g$ sending $\gamma$ expressed in \eqref{eq:G_0} to
$(1,\gamma_1,\gamma_2)$. Then we have $$(A_0^\sub,(\gamma\cdot
B)^\sub)=\theta(\gamma)\cdot(A_0^\sub,B^\sub)$$ for $\gamma\in G_0$ and
$B\in W_0$.


Next, we construct a relative polynomial invariant for the action of $H_g$ on
$V_g$ as follows.  We write any $2\times g\times (g+1)$ matrix $v$ in
$V_g$ as a pair $(A,B)$ of $g\times(g+1)$ matrices.  Let $M_v(x,y)$
denote the vector of $g\times g$ minors of $Ax-By$, where $x$ and $y$
are indeterminates; in other words, the $i$-th coordinate of the
vector $M_v(x,y)$ is given by $(-1)^{i-1}$ times the determinant of
the matrix obtained by removing the $i$-th column of $Ax-By$.  Then
$M_v(x,y)$ is a vector of length $g+1$ consisting of binary forms of
degree $g$ in $x$ and~$y$, each of which has $g+1$ coefficients.
Taking the determinant of the resulting $(g+1)\times(g+1)$ matrix of
coefficients of these $g+1$ binary forms in $M_v(x,y)$ then yields a
polynomial $Q=Q(v)$ in the coordinates of $V_g$, which is a relative
invariant for the action of $H_g$. Explicitly, we have
\begin{equation}\label{eq:relQ}
Q((\gamma_0,\gamma_1,\gamma_2)\cdot v)=\det(\gamma_1)^{g+1}\det(\gamma_2)^g Q(v).
\end{equation}
Indeed, it follows from the definition that the $Q$-invariant is
invariant under the action of the subgroup
$\SL_2\times\SL_g\times\SL_{g+1}$. (Alternatively, the group
$\SL_2\times\SL_g\times\SL_{g+1}$ has no nontrivial characters.)
Moreover, we may work over the algebraic closure and write $\gamma_1$
and $\gamma_2$ as products of scalar matrices and matrices with
determinant 1.  Finally one can easily check \eqref{eq:relQ} when
$\gamma_1$ and $\gamma_2$ are scalar matrices.

We then define the $Q$-{\it invariant} of $B\in W_0$ to be the
$Q$-invariant of $(A_0^\sub,B^\sub)$:
\begin{equation}\label{eqQB}
Q(B):=Q(A_0^\sub,B^\sub).
\end{equation}
Then the $Q$-invariant is also a relative invariant for the action of $G_0$ on $W_0$, since for any
$\gamma\in G_0$ expressed in the form \eqref{eq:G_0}, we have
\begin{equation}\label{eq:weightG_0}
Q(\gamma\cdot B) = \det(\gamma_1)Q(B).
\end{equation}
In fact, we may extend the definition of the $Q$-invariant to an even
larger subset of $W(\Q)$ than $W_0(\Q)$. We have the following
proposition.

\begin{proposition}\label{prop:extendQ}
Let $B\in W_0(\Q)$ be an element whose invariant polynomial $f(x)$ is
irreducible over~$\Q$. Then for every $B'\in W_0(\Q)$ such that $B'$ is $G(\Z)$-equivalent to $B$, we have
$Q(B')=\pm Q(B)$.
\end{proposition}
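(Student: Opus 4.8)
The plan is to show that if $B' \in W_0(\Q)$ is $G(\Z)$-equivalent to $B$, then $Q(B') = \pm Q(B)$. Since $B'$ and $B$ have the same invariant polynomial $f$ (as $G$ acts with invariant polynomial as the full invariant), the only thing that can change is the $Q$-invariant, which is a \emph{relative} invariant only for the subgroup $G_0$, not for all of $G$. So the task is to understand how much the $G(\Z)$-equivalence can move $B$ within $W_0$ while keeping us in $W_0$. First I would write $B' = \gamma \cdot B$ with $\gamma \in G(\Z)$ and ask: what constraints does the hypothesis $B, B' \in W_0(\Q)$ place on $\gamma$? Recall $B \in W_0$ means the $g$-dimensional subspace $Y_g$ (spanned by the first $g$ coordinate vectors) is isotropic for both $A_0$ and $B$; so $B$ is distinguished, and $\gamma\cdot B$ is distinguished via the subspace $\gamma^{-t} Y_g$ (since $\gamma^t$ needs to preserve things appropriately — I'd track the transpose carefully using $(\gamma B \gamma^t)$). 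Both $Y_g$ and $\gamma^{-t}Y_g$ are then $g$-dimensional subspaces isotropic for $A_0$ and $B'$.

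**The key input: irreducibility forces uniqueness of the isotropic subspace.** This is where I would invoke the cited structure theory. By \cite[Proposition~4.1]{BG2}, when $B'$ is distinguished the set of $g$-dimensional subspaces isotropic for both $A_0$ and $B'$ is a torsor under $J_f[2](\Q)$; and by \cite[Proposition~5.1]{BG2}, $J_f[2](\Q)$ is in bijection with even-degree factors of $f$ over $\Q$. Since $f$ is irreducible over $\Q$, it has no nontrivial even-degree factors (only $1$ and, if $\deg f$ is even, $f$ itself — but here I should check the parity; in the odd-degree case $n = 2g+1$ there is no even-degree factor other than $1$, and the excerpt explicitly says $J_f[2](\Q)$ is trivial for irreducible $f$). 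Hence there is a \emph{unique} such subspace, so $\gamma^{-t} Y_g = Y_g$, i.e. $\gamma^t$ preserves $Y_g$. But that is exactly the defining condition $\gamma \in G_0$ from \eqref{eq:G_0}.

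**Conclusion via the weight formula.** Once we know $\gamma \in G_0 \cap G(\Z)$, we are done quickly: by the weight formula \eqref{eq:weightG_0}, $Q(B') = Q(\gamma \cdot B) = \det(\gamma_1) Q(B)$, where $\gamma_1 \in M_{g\times g}(\Z)$ is the top-left block. Since $\gamma \in G(\Z)$ is an integer matrix and $\gamma \in G_0$ is block lower-triangular, its determinant $1$ (it lies in $\SO$) factors as $\det(\gamma_1)\det(\gamma_2) = 1$ with both factors integers, forcing $\det(\gamma_1) = \pm 1$. Therefore $Q(B') = \pm Q(B)$, as claimed.

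**Anticipated obstacle.** The substantive step — and the one I'd be most careful about — is the uniqueness of the common isotropic subspace, i.e. correctly translating ``$B' \in W_0$'' into ``$Y_g$ is isotropic for $B'$,'' and ``$B'$ is $G(\Z)$-equivalent to $B \in W_0$'' into ``$\gamma^{-t}Y_g$ is isotropic for $B'$,'' and then applying the $J_f[2]$-torsor statement to conclude these two subspaces coincide. One must also make sure the distinguished-ness hypothesis in \cite[Proposition~4.1]{BG2} is met (it is, since both $B$ and $B'$ lie in $W_0$) and that $\Delta(f) \neq 0$ so the curve $C_f$ is smooth (this follows from irreducibility of $f$, which has distinct roots). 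The rest is bookkeeping with block matrices and the weight formula, which the excerpt has already set up for us.
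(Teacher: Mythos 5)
Your proof is correct and follows essentially the same route as the paper: write $B'=\gamma\cdot B$, use irreducibility of $f$ (hence triviality of $J_f[2](\Q)$) to force the two common isotropic $g$-planes to coincide, conclude $\gamma\in G_0(\Z)$, and finish with the weight formula \eqref{eq:weightG_0} together with $\det(\gamma_1)=\pm1$. The only cosmetic difference is that you compare the subspaces $Y_g$ and $\gamma^{-t}Y_g$ relative to $B'$, whereas the paper compares $Y_g$ and $\gamma^t Y_g$ relative to $B$; these are equivalent.
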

\begin{proof}
Suppose $B'=\gamma\cdot B$ with $\gamma\in G(\Z)$ and $B,B'\in
W_0(\Q)$.  Then $Y_g$ and $\gamma^t Y_g$ are both $g$-dimensional
subspaces over $\Q$ isotropic with respect to both $A_0$ and
$B$. Since $f$ is irreducible over $\Q$, we have that $J_f[2](\Q)$ is
trivial, and so these two subspaces must be the same.  We conclude
that $\gamma\in G_0(\Z)$, and thus $Q(\gamma\cdot B)=\pm Q(B)$
by~\eqref{eq:weightG_0}.
\end{proof}

We may thus define the $|Q|$-{\it invariant} for any element $B\in
W(\Q)$ that is $G(\Z)$-equivalent to some element $B'\in W_0(\Q)$ and
whose invariant polynomial is irreducible over $\Q$; indeed, we set
$|Q|(B):=|Q(B')|$. By Proposition~\ref{prop:extendQ}, this definition
of $|Q|(B)$ is independent of the choice of $B'$. Note that all such
elements $B\in W(\Q)$ are $\Q$-distinguished.

\subsection{Embedding $\U_m^\w$ into $\frac12W(\Z)$}\label{sembedodd}

We begin by describing those monic integer polynomials in $\Vmn(\Z)$
that lie in $\U_m^\w$, i.e., the monic integer polynomials that have
discriminant weakly divisible by $p^2$ for all $p\mid m$.

\begin{proposition}\label{prop:hiddenp2}
Let $m$ be a positive squarefree integer, and let $f$ be a monic
integer polynomial whose discriminant is weakly divisible by $p^2$ for all $p\mid m$.
Then there exists an integer $\ell$ such that $f(x+\ell )$ has
the form
\begin{equation}
f(x+\ell ) = x^n + b_1x^{n-1} + \cdots + b_{n-2}x^2 + b_{n-1}x +
b_n
\end{equation}
for some integers $b_1,\ldots,b_n$ where $m$ divides $b_{n-1}$ and $m^2$ divides $b_n$.
\end{proposition}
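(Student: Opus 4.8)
The plan is to reduce the statement to a purely $p$-adic claim at each prime $p\mid m$, and then to establish that claim by splitting off, via Hensel's lemma, the quadratic $p$-adic factor of $f$ responsible for the double root. To set up the reduction, I would first note that $f(x+\ell)=\sum_{j\ge 0}\tfrac{f^{(j)}(\ell)}{j!}\,x^{j}$ has all coefficients in $\Z$ for every $\ell\in\Z$, the coefficient of $x$ being $f'(\ell)$ and the constant term being $f(\ell)$. Hence the proposition is equivalent to the existence of an integer $\ell$ with $m\mid f'(\ell)$ and $m^{2}\mid f(\ell)$, after which one simply sets $c_{n-1}=f'(\ell)/m$, $c_{n}=f(\ell)/m^{2}$, and $c_{i}=f^{(n-i)}(\ell)/(n-i)!$ for $1\le i\le n-2$. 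Since $m$ is squarefree, it suffices to exhibit, for each prime $p\mid m$, a residue $\bar r_{p}\in\F_{p}$ such that $p\mid f'(\ell)$ and $p^{2}\mid f(\ell)$ hold for \emph{every} integer $\ell\equiv\bar r_{p}\pmod p$; the Chinese Remainder Theorem then provides an $\ell$ lying in all of these classes at once. (Note that once $p\mid f'(\ell)$ is known, the condition $p^{2}\mid f(\ell)$ depends only on $\ell$ modulo $p$, since $f(\ell')\equiv f(\ell)+f'(\ell)(\ell'-\ell)\pmod{p^{2}}$ whenever $\ell'\equiv\ell\pmod p$.)

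For the local claim, fix a prime $p\mid m$. Weak divisibility of $\Delta(f)$ by $p^{2}$ means that $p^{2}\mid\Delta(f)$ and that the reduction $\bar f\in\F_{p}[x]$ factors as $(x-\bar r)^{2}\bar h$ with $\bar r\in\F_{p}$, $\bar h$ separable, and $\bar h(\bar r)\neq 0$; I take $\bar r_{p}:=\bar r$. That $p\mid f'(\ell)$ whenever $\ell\equiv\bar r\pmod p$ is clear, as $\bar r$ is a multiple root of $\bar f$, hence a root of $\overline{f'}$. For $p^{2}\mid f(\ell)$, I would apply Hensel's lemma in the form that lifts coprime factorizations: from $\bar f=(x-\bar r)^{2}\cdot\bar h$ one obtains $f=f_{1}f_{2}$ in $\Z_{p}[x]$ with $f_{1}$ monic of degree $2$ reducing to $(x-\bar r)^{2}$ and $f_{2}$ monic reducing to $\bar h$. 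Since $\bar h$ is separable, $\Delta(f_{2})\in\Z_{p}^{\times}$; and $\Res(f_{1},f_{2})$ reduces to $\bar h(\bar r)^{2}\neq 0$, so $\Res(f_{1},f_{2})\in\Z_{p}^{\times}$ as well. Hence the identity $\Delta(f)=\Delta(f_{1})\,\Delta(f_{2})\,\Res(f_{1},f_{2})^{2}$ shows that the full $p$-adic valuation of $\Delta(f)$ is carried by the quadratic factor: $v_{p}(\Delta(f_{1}))=v_{p}(\Delta(f))\ge 2$.

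To conclude when $p$ is odd, write $f_{1}=x^{2}+bx+c\in\Z_{p}[x]$ and set $\mu:=-b/2\in\Z_{p}$, which reduces to $\bar r$ modulo $p$. Completing the square gives $f_{1}(\mu)=c-b^{2}/4=-\Delta(f_{1})/4$, so $v_{p}(f_{1}(\mu))=v_{p}(\Delta(f_{1}))\ge 2$; and for any integer $\ell\equiv\bar r\pmod p$ one computes $f_{1}(\ell)-f_{1}(\mu)=(\ell-\mu)^{2}\in p^{2}\Z_{p}$ (since $\ell\equiv\mu\pmod p$), so $v_{p}(f_{1}(\ell))\ge 2$. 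As $f_{2}(\ell)\equiv\bar h(\bar r)\not\equiv 0\pmod p$ is a unit, we get $v_{p}(f(\ell))=v_{p}(f_{1}(\ell))\ge 2$, i.e.\ $p^{2}\mid f(\ell)$, as required. This settles the local claim at every odd prime $p\mid m$.

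The step I expect to be the main obstacle is the prime $p=2$, where the previous paragraph breaks down because $\mu=-b/2$ need not be $2$-integral. There one must instead argue directly modulo $4$: track which residues of the coefficients $b,c$ of the quadratic factor $f_{1}$ are compatible with the double root together with the weak divisibility of $\Delta(f)$ at $2$, and verify in each case that a suitable $\ell$ in the residue class of $\bar r$ can be found. This $2$-adic bookkeeping is the only genuinely delicate point; it is also closely tied to the appearance of the denominators in the lattices $\tfrac12W(\Z)$ and $\tfrac14W(\Z)$ into which $\U_m^\w$ is later embedded. Everything else — the Chinese Remainder reduction, the Hensel factorization, and the discriminant product formula — is routine.
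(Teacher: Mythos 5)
Your reduction of the proposition to the local statement ``$m\mid f'(\ell)$ and $m^2\mid f(\ell)$'' and your argument at odd primes are correct, and they take a genuinely different route from the paper's: you split off the quadratic factor over $\Z_p$ by Hensel's lemma, transfer the divisibility $p^2\mid\Delta(f)$ to $\Delta(f_1)$ via $\Delta(f_1f_2)=\Delta(f_1)\Delta(f_2)\Res(f_1,f_2)^2$, and finish by completing the square. The paper instead translates the double root to $0$ (so $p\mid c_{n-1},\,p\mid c_n$) and uses the single identity $\Delta(f)=c_n\Delta_1+c_{n-1}^2\Delta_2+c_{n-1}c_n\Delta_3$ coming from the resultant definition of the discriminant: if $p^2\nmid c_n$ then $p\mid\Delta_1$ persists under any change of the $c_i$ by multiples of $p$, forcing $p^2$ to divide $\Delta$ \emph{strongly}, contradicting weak divisibility. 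The paper's argument is uniform in $p$ and involves no division by $2$.

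That uniformity is exactly where your proposal has a genuine gap: the case $p=2$. The proposition allows even squarefree $m$, and your last paragraph only describes what ``one must'' do at $2$ without doing it. Your completing-the-square step really does fail there: $f_1(\mu)=-\Delta(f_1)/4$ loses a factor of $4$ (so $v_2(\Delta(f_1))\ge 2$ no longer gives $v_2(f_1(\mu))\ge 2$), and although $\mu=-b/2$ is in fact $2$-integral (the linear coefficient $b$ of $f_1$ is even because $f_1$ reduces to $(x-\bar r)^2$ over $\F_2$), $\mu$ need not reduce to $\bar r$. So as written the even-$m$ case of the proposition is not proved. The gap is easy to close, in two ways: either run the paper's resultant-identity argument, which treats $p=2$ on the same footing as odd $p$; or observe that weak divisibility by $4$ at $p=2$ never occurs at all, since if $\bar f$ has a repeated root mod $2$ then $\Delta(f+2g)$ is even for every $g$, hence $\equiv 0\pmod 4$ by Stickelberger's theorem, so $4\mid\Delta(f)$ always holds strongly and the case is vacuous. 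But you neither prove this vacuity nor carry out the promised mod-$4$ bookkeeping, so the proof as submitted is incomplete at $2$.
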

\begin{proof}
Since $m$ is squarefree, by the Chinese Remainder Theorem it suffices
to prove the assertion in the case that $m=p$ is prime. Since $p$
divides the discriminant of~$f$, the reduction of $f$ modulo~$p$ must
have a repeated factor $h(x)^e$ for some polynomial $h\in \F_p[x]$ and
some integer $e\geq2$. As the discriminant of $f$ is not strongly
divisible by $p^2$, we see that $h$ is linear and $e=2$. By replacing
$f(x)$ by $f(x+\ell )$ for some integer $\ell$, if necessary, we may
assume that the repeated factor is $x^2$, i.e., we may assume that
$f(x)$ has the form $$f(x) = x^n + b_1x^{n-1} + \cdots + b_{n-1}x +
b_n$$ for some integers $b_1,\ldots,b_n$ such that $p$ divides
$b_{n-1}$ and $b_n$. It remains now to show that $p^2$ divides~$b_n$.

Viewing the discriminant $\Delta(f)$ has a polynomial in $b_n$,
we write
$$\Delta(f) = b_n\Delta_1 + \Delta_2,$$ for polynomials $\Delta_1\in
\Z[b_1,\ldots,b_n]$ and $\Delta_2\in \Z[b_1,\ldots,b_{n-1}]$. Next we
set $b_n$ to $0$ and observe that $b_{n-1}^2$ divides
$\Delta_2$. Indeed, the discriminant of $f$ is equal to the resultant
$\Res(f(x),f'(x))$ of $f(x)$ and $f'(x)$.  When $b_n=0$, the only
nonzero entry in the last column of the matrix whose determinant
computes $\Res(f(x),f'(x))$ is $b_{n-1}$ appearing in the last
row. The corresponding minor after removing the last row and column
has two nonzero entries in its last column, both of which equal to
$b_{n-1}$. This implies that we can pull out another factor of $b_{n-1}$ from the determinant of this minor.
Hence there is a polynomial $\Delta_3\in
\Z[b_1,\ldots,b_{n-1}]$ such that
\begin{equation}\Delta(f) = b_n\Delta_1 + b_{n-1}^2\Delta_3.
\end{equation}
Since $p$ divides $b_{n-1}$ and $b_n$, we see that $p^2\mid \Delta(f)$
if and only if $p^2\mid b_n\Delta_1$. If $p^2$ does not divide $b_n$,
then $p$ divides $\Delta_1$, which implies that $p^2$ divides $\Delta(f)$
strongly, a contradiction. Therefore, we have that $p^2$ divides $b_n$.
\end{proof}

Proposition \ref{prop:hiddenp2} identifies the $p^2$ hidden in the
coefficients of a monic polynomial $f$ when $p^2$ weakly divides
$\Delta(f)$. We next construct a matrix in $\frac12 W_0(\Z)$ with
invariant polynomial $f$ and where the two $p$'s in $p^2$ are split
apart.  For any integers $m,c_1,\ldots,c_n$, consider the matrix
\begin{equation}\label{mat1}
 B_m(c_1,\ldots,c_n) =
 \left(\!\begin{array}{ccccccccc}&&&&&&&m&\!0\\[.1in]&&&&&&\iddots&\iddots& \\[.15in]&&&&&\,1\,&\,0\,&&\\[.125in] &&&&\,1\,&0&&&\\[.125in] &&&1&-c_1&\!\!-c_2/2\!\!&&& \\[.15in] &&1&\;\,0\;\,&\!\!-c_2/2\!\!&-c_3&\!\!-c_4/2\!\!&&\\[.045in]&\iddots&\;\,\,0\;\,\,&&&\!\!-c_4/2\!\!&-c_5&\ddots&\\[.025in] \;m\;&\,\,\,\iddots\,\,\,&&&&&\ddots&\ddots&\!\!\!-c_{n-1}/2\!\!\!
   \\[.105in] \,0\,&&&&&&&\!\!\!-c_{n-1}/2\!\!\!&-c_n \end{array}\right)
\end{equation}
in $\frac12 W_0(\Z)$. It follows from a direct computation that
$$f_{B_m(c_1,\ldots,c_n)}(x) = x^n + c_1x^{n-1} + \cdots + c_{n-2}x^2
+ mc_{n-1}x + m^2c_n.$$ We compute the $Q$-invariant of $B_m(c_1,\ldots,c_n)$. Let $v = (A_0^\sub, B_m(c_1,\ldots,c_n)^\sub)$ be the corresponding element in $V_g$. The vector $M_v(x,y)$ of length $g+1$ of minors of $A_0^\sub x-B_m(c_1,\ldots,c_n)^\sub y$ is
$$M_v(x,y) = \begin{pmatrix} x^g\\ x^{g-1}y\\ \vdots \\ xy^{g-1}\\ my^g\end{pmatrix}.$$
Computing the determinant of the $(g+1)\times(g+1)$ matrix of coefficients of $M_v(x,y)$ then gives
$$Q(B_m(c_1,\ldots,c_n)) = Q(v) = m.$$
For any integer $\ell$, we have
\begin{equation}\label{eq:ell}
f_{B_m(c_1,\ldots,c_n) + \ell A_0}(x) = (-1)^g\det(A_0(x-\ell)-B_m(c_1,\ldots,c_n)) = f_{B_m(c_1,\ldots,c_n)}(x - \ell),
\end{equation}
and by the $\SL_2$-invariance of the $Q$-invariant,
\begin{equation}\label{eq:ellQ}
Q(B_m(c_1,\ldots,c_n) + \ell A_0) = Q(B_m(c_1,\ldots,c_n)) = m.
\end{equation}

\begin{theorem}\label{keymap}
Let $m$ be a positive squarefree integer. There exists a map
$\sigma_m:\U_m^\w\to \frac12W_0(\Z)$ such that for every $f\in \U_m^\w$,
\begin{equation}\label{eq:liftQ}
f_{\sigma_m(f)}=f,\qquad Q(\sigma_m(f)) = m.
\end{equation}
\end{theorem}

\begin{proof}
Let $f$ be any element of $\U_m^\w$. By Proposition \ref{prop:hiddenp2}, there exists an integer $\ell$ and integers $c_1,\ldots,c_n$ such that
$$f(x+\ell) = x^n + c_1x^{n-1} + \cdots + c_{n-2}x^2 + mc_{n-1}x + m^2c_n.$$
We set $\sigma_m(f) = B_m(c_1,\ldots,c_n) + \ell A_0$. Then \eqref{eq:liftQ} follows from  \eqref{eq:ell} and \eqref{eq:ellQ}.
\end{proof}

We remark that the integer $\ell$ in the proof of Theorem \ref{keymap}
is unique modulo $m$ since it is the unique double root of $f(x)$
modulo every prime factor $p$ of $m$. Since the invariant polynomial
of $\sigma_m(f)$ recovers $f$, we see that no two elements in the
image of $\sigma_m$ are in the same $G(\Z)$-orbit (or even
$G(\C)$-orbit). Furthermore, by the definition of discriminant on $W$,
we have that $\Delta(\sigma_m(f))=\Delta(f)$ is divisible by $p^2$ for
every prime factor $p$ of $m$. If one varies the coefficients of
$\sigma_m(f)$ by multiples of $p$ while keeping the top left $g\times
g$ block $0$, one can show that $p^2$ still divides the discriminant
of the resulting matrix. In other words, the matrix $\sigma_m(f)$, as
an element of $\frac12 W_0(\Z)$, has discriminant \emph{strongly
  divisible} by $p^2$. It then seems natural to use the Ekedahl sieve
to handle this strongly divisible case. However, the Ekedahl sieve
does not apply when the polynomial is not squarefree as a polynomial
and as we will show in the sequel, as polynomials in the coordinates
of $W_0$, the discriminant $\Delta$ is divisible by $Q^2$. Instead, we
will count elements in $\frac12W_0(\Z)$ having large $Q$-invariant
using geometry-of-numbers techniques.

More precisely, let $\LL$ be the set of elements $v\in \frac12W(\Z)$
satisfying the following conditions: $v$ is $G(\Z)$-equivalent to some
element in $\frac12W_0(\Z)$ and the invariant polynomial of $v$ is
irreducible over $\Q$.  Then by the remark following
Proposition~\ref{prop:extendQ}, we may view $|Q|$ as a function also
on $\LL$.  Using $\U_m^{\w,\irr}$ to denote the set of irreducible
polynomials in $\U_m^\w$, we then have the following immediate
consequence of Theorem \ref{keymap}:

\begin{theorem}\label{keymaporbit}
Let $m$ be a positive squarefree integer. There exists an injective map
$$\bar{\sigma}_m:\U_m^{\w,\irr}\to G(\Z)\backslash\LL$$ such that
$f_{\bar{\sigma}_m(f)}=f$ for every $f\in \U_m^{\w,\irr}$. Moreover,
for every element $B$ in the $G(\Z)$-orbit of an element in the image
of $\bar{\sigma}_m$, we have $|Q|(B)=m$.
\end{theorem}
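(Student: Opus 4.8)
The plan is to read Theorem~\ref{keymaporbit} off directly from Theorem~\ref{keymap}, together with Proposition~\ref{prop:extendQ} and the definition of the set~$\LL$; the only genuine content is checking that the $|Q|$-invariant is well behaved on the relevant $G(\Z)$-orbits. First I would fix $f\in\U_m^{\w,\irr}$ and consider the element $\sigma_m(f)\in\frac12W_0(\Z)$ produced by Theorem~\ref{keymap}. Since $f_{\sigma_m(f)}=f$ is irreducible over~$\Q$ by hypothesis, since $\sigma_m(f)$ lies in $\frac12W_0(\Z)\subset\frac12W(\Z)$, and since $\sigma_m(f)$ is (trivially) $G(\Z)$-equivalent to an element of $\frac12W_0(\Z)$, the element $\sigma_m(f)$ satisfies all three defining conditions of~$\LL$. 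I would then define $\bar\sigma_m(f)$ to be the class of $\sigma_m(f)$ in $G(\Z)\backslash\LL$.

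For the first assertion I would invoke the fact recalled in the subsection that the invariant polynomial $f_B$ is a $G$-invariant of~$B$ (its coefficients generate the ring of polynomial invariants of $G$ on~$W$), so that $f_B$ is constant on $G$-orbits. Hence $f_{\bar\sigma_m(f)}:=f_{\sigma_m(f)}=f$ is a well-defined quantity attached to the orbit, and equals $f$ for every $f\in\U_m^{\w,\irr}$.

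For the second assertion, let $B$ be any element lying in the $G(\Z)$-orbit of an element in the image of $\bar\sigma_m$; thus $B$ is $G(\Z)$-equivalent to $\sigma_m(f)$ for some $f\in\U_m^{\w,\irr}$. Since $\sigma_m(f)\in\frac12W_0(\Z)\subset W_0(\Q)$ and its invariant polynomial $f$ is irreducible over~$\Q$, Proposition~\ref{prop:extendQ} and the remark following it show that $|Q|(B)$ is well defined and equals $|Q(\sigma_m(f))|$. By the final clause of Theorem~\ref{keymap}, this common value is~$m$, which is exactly the claim.

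I do not expect a substantial obstacle here, since the statement is flagged as an immediate consequence of Theorem~\ref{keymap}. The one point requiring care is that an element $B$ of the $G(\Z)$-orbit of $\sigma_m(f)$ need not itself lie in $W_0(\Q)$, which is precisely why one must use the extended $|Q|$-invariant (rather than $Q$ on $W_0$); but the well-definedness of that extension — the fact that irreducibility of $f$ over~$\Q$ forces the two isotropic $g$-dimensional subspaces $Y_g$ and $\gamma^tY_g$ to coincide and hence $\gamma\in G_0(\Z)$ — is exactly the content of Proposition~\ref{prop:extendQ}, already established above. So the proof is essentially bookkeeping once that proposition is in hand.
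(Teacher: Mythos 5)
Your proposal is correct and matches the paper's treatment: the paper states this theorem as an immediate consequence of Theorem~\ref{keymap} together with Proposition~\ref{prop:extendQ} and the remark following it, which is exactly the bookkeeping you carry out (checking $\sigma_m(f)\in\LL$, invariance of $f_B$ on orbits, and well-definedness of the extended $|Q|$-invariant giving the value $m$). No gaps.
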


The number of reducible monic integer
polynomials having height less than $X$ is of a strictly smaller order
of magnitude than the total number of such polynomials (see, e.g.,
Proposition~\ref{propredboundall}).  Thus, for our purposes of proving
Theorem~\ref{thm:mainestimate}(b), it will suffice to count elements
in $\U_m^{\w,\irr}$ of height less than $X$ over all $m>M$, which by
Theorem~\ref{keymaporbit} we may do by counting these special
$G(\Z)$-orbits on $\LL\subset \frac12W(\Z)$ having height less than
$X$ and $|Q|$-invariant greater than $M$.  More precisely, let
$N(\LL;M;X)$ denote the number of $G(\Z)$-equivalence classes of
elements in $\LL$ whose $|Q|$-invariant is greater than $M$ and whose
height is less than $X$.  Then, by Theorem~\ref{keymaporbit}, to
obtain an upper bound for the left hand side in
Theorem~\ref{thm:mainestimate}(b), it suffices to obtain the same
upper bound for $N(\LL;M;X)$.

\subsection{Counting $G(\Z)$-orbits in $\frac12W(\Z)$}

The counting problem for the representation $W$ of $G$ is studied in
\cite{BG2}. In this section, we recall some of the set up and results
of \cite{BG2}.

We remark first that the representation studied in \cite{BG2} is the
subspace of $W$ consisting of symmetric matrices with anti-trace
$0$. Since this is a linear condition on the anti-diagonal entries,
each of which has weight $1$ when restricted to the action of the
maximal torus (see also \eqref{wbij}), the only difference in not
imposing this condition is an extra factor of $X$ in the count. We
will in fact make use of the anti-trace-$0$ version in Section~\ref{latticearg} in our
application to counting fields.

To count $G(\Z)$-orbits in a lattice $\frac12W(\Z)$ in $W(\R)$, one
begins by constructing fundamental domains for the action of $G(\Z)$
on the set of elements in $W(\R)$ with nonzero discriminant. A
fundamental domain $R$ for the action of $G(\R)$ on the set of
elements in $W(\R)$ with nonzero discriminant and height less than $1$
is obtained in \cite[\S9.1]{BG2}. The exact shape of $R$ is not
important. What is important is that $R$ is (absolutely) bounded. Next
a fundamental domain $\FF$ for the left multiplication action of
$G(\Z)$ on $G(\R)$ is written down in \cite[\S9.2]{BG2}. This is done
using the Iwasawa decomposition of $G(\R)$ as
$$
G(\R)=N(\R)TK,
$$ where $N$ is a unipotent group consisting of lower triangular matrices, $K$ is compact, and $T$ is the split torus of $G$ given by
\begin{equation*}
T=
\left\{\left(\begin{array}{ccccccc}
 t_1^{-1}&&&&&&\\
&\ddots &&&&&  \\
 && t_{g}^{-1} &&&&\\
&&& 1 &&&\\
 &&&& t_g &&\\
&&&&&\ddots &  \\
& &&&&& t_{1}
\end{array}\right):t_1,\ldots,t_g\in\R\right\}.
\end{equation*}
We may also make the following change of variables. For $1\leq
i\leq g-1$, set $s_i$ to be
$$
s_i=t_i/t_{i+1},
$$ and set $s_g=t_g$.  It follows that for $1\leq i\leq g$, we have
$t_i = s_is_{i+1}\cdots s_g.$ We denote an element of $T$ with
coordinates $t_i$ (resp.\ $s_i$) by $(t)$ (resp.\ $(s)$). A
fundamental set for the action of $G(\Z)$ on $G(\R)$ can then be taken
to be contained in a {\it Siegel set}, i.e., contained in $N'T'K$,
where $N'$ consists of elements in $N(\R)$ whose coefficients are
absolutely bounded and $T'\subset T$ consists of elements in $(s)\in
T$ with $s_i\geq c$ for some positive constant $c$.

For any $h\in G(\R)$, since $\FF h$ remains a fundamental domain for
the action of $G(\Z)$ on $G(\R)$, the set $(\FF h)\cdot (XR)$ (when
viewed as a multiset) is a finite cover of a fundamental domain for
the action of $G(\Z)$ on the elements in $W(\R)$ with nonzero
discriminant and height bounded by $X$. The degree of the cover
depends only on the size of stabilizer in $G(\R)$ and is thus
absolutely bounded by $2^{2g}$. The presence of these stabilizers is
in fact the reason we consider $\FF h\cdot XR$ as a multiset. Hence,
we have
\begin{equation}\label{eqoddfundv}
  N(\LL;M;X)\ll \#\{B\in ((\FF h)\cdot (XR))\cap\LL:|Q|(B)>M\}.
\end{equation}
Let $\GG_1$ be a compact left $K$-invariant set in $G(\R)$ which is the
closure of a nonempty open set.
Averaging \eqref{eqoddfundv} over $h\in \GG_1$ and exchanging the
order of integration as in \cite[\S10.1]{BG2}, we obtain
\begin{equation}
  N(\LL;M;X)\ll \int_{\gamma\in\FF} \#\{B\in((\gamma \GG_1)\cdot (XR))\cap\LL:|Q|(B)>M\}
  d\gamma,
\end{equation}
where the implied constant depends only on $\GG_1$ and $R$, and where $d\gamma$ is a Haar measure on $G(\R)$ given by
$$
d\gamma=dn\,\delta(s)d^\times s\,dk,
$$ where $dn$ is a Haar measure on the unipotent group $N(\R)$,
$dk$ is a Haar measure on the compact group~$K$, $d^\times s$ is
given by
$$
d^\times s:=\prod_{i=1}^g\frac{ds_i}{s_i},
$$
and
\begin{equation}\label{eqhaarodd}
\delta(s)=\prod_{k=1}^g s_k^{k^2-2kg};
\end{equation}
see \cite[(10.7)]{BG2}.

Since $s_i\geq c$ for every $i$, there exists a compact subset $N''$
of $N(\R)$ containing $(t)^{-1}N'\,(t)$ for all $t\in T'$. Since
$N''$, $K$, $\GG_1$ are compact and $R$ is bounded, the set $E=N''K\GG_1R$
is bounded. Then we have
\begin{equation}\label{eq:RE}
N(\LL;M;X)\ll \int_{s_i\gg 1} \#\{B\in((s)\cdot XE)\cap\LL:|Q|(B)>M\}\delta(s)d^\times s.
\end{equation}

We denote the coordinates on $W$ by $b_{ij}$, for $1\leq i\leq j\leq
n$. These coordinates are eigenvectors for the action of $T$ on $W^*$,
the dual of $W$. Denote the $T$-weight of a coordinate $\alpha$ on
$W$, or more generally a product $\alpha$ of powers of such
coordinates, by $w(\alpha)$. An elementary computation shows that
\begin{equation}\label{wbij}
w(b_{ij})=\left\{
\begin{array}{rcl}
t_i^{-1}t_j^{-1} &\mbox{ if }& i,j\leq g\\
t_i^{-1} &\mbox{ if }& i\leq g,\;j=g+1\\
t_i^{-1}t_{n-j+1} &\mbox{ if }& i\leq g,\; j>g+1\\
1 &\mbox{ if }& i=j=g+1\\
t_{n-j+1} &\mbox{ if }& i=g+1,\;j>g+1\\
t_{n-i+1}t_{n-j+1} &\mbox{ if }& i,j>g+1.
\end{array}
\right.
\end{equation}
Then the $(i,j)$-entry of any $B\in (s)\cdot XE$ is bounded by
$Xw(b_{ij})$, up to a multiplicative constant depending only on $\GG_1$
and $R$.

Let $W^\dist$ denote the subset of $W(\Q)$ consisting of
$\Q$-distinguished elements. Then $\LL$ is a subset of $W^\dist$. It
is shown in \cite[\S10.2]{BG2} that most of the lattice points in
$W^\dist$ lie inside the subspace $W_{00}$ consisting of symmetric
matrices $B$ whose $(i,j)$-entries are 0 whenever $i+j<n$. More
precisely, we have the following estimates from \cite[Propositions~10.5 and 10.7]{BG2}:

\begin{proposition}\label{prop:bg2}
We have
\begin{eqnarray}
  \int_{s_i\gg 1} \#\{B\in((s)\cdot XE)\cap({\textstyle\frac12} W(\Z)\setminus{\textstyle\frac12} W_{00}(\Z)):b_{11}=0\}\delta(s)d^\times s &=& O_\epsilon(X^{n(n+1)/2-1+\epsilon}) \\
  \label{eq:selbergpre}\int_{s_i\gg 1} \#\{B\in((s)\cdot XE)\cap\LL:b_{11}\neq 0\}\delta(s)d^\times s &=& o(X^{n(n+1)/2}).
\end{eqnarray}
\end{proposition}

Therefore, to prove Theorem~\ref{thm:mainestimate}(b) when $n$ is odd, it remains to obtain a power-saving improvement of \eqref{eq:selbergpre} and estimate
\begin{equation}\label{eq:Qinv}
\int_{s_i\gg 1} \#\{B\in ((s)\cdot XE)\cap \LL\cap
{\textstyle\frac12} W_{00}(\Z):|Q|(B)>M\}\delta(s)d^\times s.
\end{equation}

\subsection{Proof of Theorem~\ref{thm:mainestimate}(b)
  for odd $n$}\label{sgomodd}

We begin with a power-saving improvement of \eqref{eq:selbergpre}.

\begin{proposition}\label{propodd}
  We have
  \begin{equation*}
\displaystyle\int_{s_i\gg 1} \#\{B\in ((s)\cdot XE)\cap W^\dist \cap {\textstyle\frac12} W(\Z): b_{11}\neq 0\}\delta(s)d^\times s =O_\epsilon(X^{n(n+1)/2-1/5+\epsilon}).
  \end{equation*}
\end{proposition}
\begin{proof}
We note that the $p$-adic density of elements in $W(\Z_p)$ that are
$\Q_p$-distinguished is bounded uniformly away from $1$. In fact, this
density is bounded above by $1 - \frac{g}{2g+1}+O(1/p)$ by 
\cite[\S10.7]{BG2}.  Then an application of the Selberg sieve exactly
as in \cite{ShTs} yields the result.
\end{proof}

We now estimate \eqref{eq:Qinv}. The $Q$-invariant can also be
given a weight by viewing the torus $T$ as sitting inside $G_0$ and
using \eqref{eq:weightG_0}. Namely,
\begin{equation}\label{eqQweight}
w(Q)=\prod_{k=1}^gt_k^{-1}=\prod_{k=1}^gs_k^{-k}.
\end{equation}
Since the polynomial $Q$ is homogeneous of degree $g(g+1)/2$ in the
coefficients of $W_0$, we see that the $Q$-invariant of any $B\in
(s)\cdot XE$ is bounded by $X^{g(g+1)/2}w(Q)$, up to a multiplicative
constant depending only on $\GG_1$ and $R$.

Although we shall not use this fact directly, 
the points in $((s)\cdot XE)\cap \frac12W_{00}(\Z)$ with irreducible
invariant polynomial occur predominantly when the coordinates $s_i$
are so large that they force any (half-)integral point of $(s)\cdot
XE$ to lie inside $W_{00}$; this can be deduced by following the  
proof of Proposition~\ref{prop:bg2} and using Proposition~\ref{propodd}. 
On the other hand, since the weight of
the $Q$-invariant is a product of negative powers of $s_i$, the
$Q$-invariants of points in $((s)\cdot XE)\cap \frac12W_{00}(\Z)$
become large when the coordinates $s_i$ are small.  It is the tension
between these two requirements that underlies the proof of the following proposition, 
which gives the desired power-saving estimate for the
number of elements in $((s)\cdot XE) \cap \frac12W_{00}(\Z)$ having
large $|Q|$-invariant.

\begin{proposition}\label{proplargeQbound}
We have
\begin{equation*}
\int_{s_i\gg 1} \#\{B\in ((s)\cdot XE)\cap \LL\cap
{\textstyle\frac12} W_{00}(\Z):|Q|(B)>M\}\delta(s)d^\times s=O(\frac{1}{M}X^{n(n+1)/2}\log X).
\end{equation*}
\end{proposition}

\begin{proof}
First note that if an element in $\frac12 W_{00}(\Z)$ has
$(i,j)$-coordinate $0$ for some $i+j=n$, then the element has
discriminant $0$ and hence is not in $\LL$. Since the weight of $b_{i,n-i}$ is $s_i^{-1}$, to count points in $\LL$ it suffices to integrate only in the region where $s_i\ll X$ for all $i$.
Furthermore, the condition on the $|Q|$-invariant implies that
it suffices to integrate only in the region where $X^{g(g+1)/2}w(Q) \gg M$.

Let $S$ denote the set of coordinates of $W_{00}$, i.e.,
$S=\{b_{ij}:i+j\geq n\}$. For $(s)$ in the range $1\ll s_i\ll X$, we
have $Xw(\alpha)\gg 1$ for all $\alpha\in S$. Hence the number of lattice points in $(s)\cdot (XE)$ for $(s)$ in this range is $\ll \prod_{\alpha\in S}(Xw(\alpha))$. Therefore, 
\begin{eqnarray*}
&&\displaystyle\int_{s_i\gg 1} \#\{B\in((s)\cdot (XE))\cap \LL\cap {\textstyle\frac12} W_{00}(\Z):|Q(B)|>M\}\delta(s)d^\times s \\
&\ll&\displaystyle\int_{1\ll s_i\ll X,\,\,X^{g(g+1)/2}w(Q) \gg M} \prod_{\alpha\in S}\bigl(Xw(\alpha)\bigr)\delta(s)d^\times s \\
&\ll&\displaystyle\int_{1\ll s_i\ll X,\,\,X^{g(g+1)/2}w(Q) \gg M} X^{n(n+1)/2-g^2}\prod_{k=1}^gs_k^{2k-1}d^\times s \\
&\ll&\displaystyle\frac{1}{M}\int_{s_i=1}^X X^{n(n+1)/2-g^2+g(g+1)/2}w(Q)\prod_{k=1}^gs_k^{2k-1}d^\times s \\
&\ll&\displaystyle\frac{1}{M}\int_{s_i=1}^X X^{n(n+1)/2-g(g-1)/2}\prod_{k=1}^gs_k^{k-1}d^\times s \\
&\ll&\displaystyle\frac{1}{M}X^{n(n+1)/2}\log(X),
\end{eqnarray*}
where the second inequality follows from the definition
\eqref{eqhaarodd} of $\delta(s)$ and the computation \eqref{wbij} of
the weights of the coordinates $b_{ij}$, the third inequality follows
from the fact that $X^{g(g+1)/2}w(Q) \gg M$, the fourth inequality
follows from the computation of the weight of $Q$ in
\eqref{eqQweight}, and the $\log X$ factor comes from the integration
over $s_1$.
\end{proof}

The estimate in Theorem \ref{thm:mainestimate}(b) for odd $n$ now
follows from Theorem \ref{keymaporbit} and
Propositions~\ref{prop:bg2}, \ref{propodd} and~\ref{proplargeQbound},
in conjunction with the bound on the number of reducible polynomials
proved in Proposition~\ref{propredboundall}.

\section{A uniformity estimate for even degree monic polynomials}\label{sec:moniceven}

In this section, which is structured similarly to Section~\ref{sec:monicodd}, we
prove the estimate of Theorem~\ref{thm:mainestimate}(b) when $n=2g+2$
is even, for any $g\geq 1$.

\subsection{Invariant theory for the fundamental representation: $\SO_n$ on the space $W$ of symmetric $n\times n$ matrices}

Let $A_0$ denote the $n\times n$ symmetric matrix with $1$'s on the
anti-diagonal and $0$'s elsewhere. The group $\SO(A_0)$ acts on $W$
via the action
\begin{equation*}
\gamma\cdot B=\gamma B\gamma^t.
\end{equation*}
The central $\mu_2$ acts trivially and so the action descends to an
action of $G=\SO(A_0)/\mu_2$.

We recall some of the arithmetic invariant theory of the
representation $W$ of $n\times n$ symmetric matrices of the
(projective) split orthogonal group $G=\PSO_n.$ See \cite{SW} for more
details. The ring of polynomial invariants over $\C$ is
freely generated by the coefficients of the {\it invariant
  polynomial} $$f_B(x):=(-1)^{g+1}\det(A_0x-B)$$ (See \cite[\S2.1, 2.2]{SW}). We define the {\it
  discriminant} $\Delta$ and {\it height} $H$ on $W$ as the discriminant
and height of the invariant polynomial.

Let $k$ be a field of characteristic not $2$.
For any monic polynomial $f(x)\in k[x]$ of degree~$n$ such
that $\Delta(f)\neq0$, let $C_f$ denote the smooth hyperelliptic curve
$y^2=f(x)$ of genus $g$ and let $J_f$ denote its Jacobian. Then $C_f$
has two rational non-Weierstrass points at infinity that are conjugate
by the hyperelliptic involution.
The stabilizer of an element $B\in W(k)$ with invariant polynomial $f(x)$
is isomorphic to $J_f[2](k)$ by~\cite[Proposition~2.33]{W}, and hence has cardinality at most $\#J_f[2](\bar k)=2^{2g}$, where $\bar k$ denotes a separable closure of $k$.

We say that an element (or the $G(k)$-orbit of an element) $B\in W(k)$ with $\Delta(B)\neq 0$ is
{\it distinguished} if there exists a flag $Y'\subset Y$ defined over
$k$ where $Y$ is $(g+1)$-dimensional isotropic with respect to~$A_0$
and $Y'$ is $g$-dimensional isotropic with respect to $B$. If $B$ is
distinguished, then the set of these flags is in bijection with
$J_f[2](k)$ by \cite[Proposition~2.32]{W},  and so it too has cardinality at most~$2^{2g}$.

In fact, it is known (see~\cite[Proposition~22]{BGWhyper}) that the elements of $J_f[2](k)$ are in natural bijection with the even degree factorizations of $f$ defined over $k$.  (Note that the number of such factorizations of $f$ over $\bar k$ is indeed $2^{2g}$.) In particular, if $f$ is irreducible over $k$ and does not factor as $g(x)\bar g(x)$ over some quadratic extension of $k$, then the group $J_f[2](k)$ is trivial.

Let $W_0$ be the subspace of $W$ consisting of matrices whose top left
$g\times (g+1)$ block is zero. Then elements $B$ in $W_0(k)$ with
nonzero discriminant are all distinguished since the $(g+1)$-dimensional
subspace $Y_{g+1}$ spanned by the first $g+1$ basis vectors is
isotropic with respect to $A_0$ and the $g$-dimensional subspace
$Y_g\subset Y_{g+1}$ spanned by the first $g$ basis vectors is
isotropic with respect to $B$.
Let $G_0$ be the parabolic subgroup of
$G$ consisting of elements $\gamma$ such that $\gamma^t$ preserves the
flag $Y_g\subset Y_{g+1}$. Then $G_0$ acts on $W_0$.

An element $\gamma\in G_0$ has the block matrix form
\begin{equation}\label{eq:G_0even}
\gamma=\left(\begin{array}{ccc}\gamma_1 & 0 & 0\\ \delta_1 & \alpha & 0\\ \delta_2 & \delta_3 & \gamma_2
\end{array}\right)\in\left(\begin{array}{ccc}M_{g\times g} & 0 & 0\\ M_{1\times g} & M_{1\times 1} & M_{1\times (g+1)}\\ M_{(g+1)\times g} & M_{(g+1)\times 1} & M_{(g+1)\times (g+1)}
\end{array}\right),
\end{equation}
and so $\gamma\in G_0$ acts on the top right $g\times (g+1)$ block
of an element $B\in W_0$ by
$$\gamma.B^\sub = \gamma_1B^\sub\gamma_2^t,$$ where we use the
superscript ``top'' to denote the top right $g\times (g+1)$ block of
any given element of $W_0$. We may thus view $(A_0^\sub,B^\sub)$ as an
element of the representation $V_g=2\otimes g\otimes (g+1)$. As
before, we define the $Q$-{\it invariant} of $B\in W_0$ as the
$Q$-invariant of $(A_0^\sub,B^\sub)$:
\begin{equation}\label{eqQBeven}
Q(B):=Q(A_0^\sub,B^\sub).
\end{equation}
Then the $Q$-invariant is a relative invariant for the action of $G_0$ on $W_0$, i.e.,
for any $\gamma\in G_0$ in the form \eqref{eq:G_0even}, we have by \eqref{eq:relQ},
\begin{equation}\label{eq:weightG_0even}
Q(\gamma.B) = \det(\gamma_1)^{g+1}\det(\gamma_2)^gQ(B) =
\det(\gamma_1)\alpha^{-g}Q(B).
\end{equation}

In fact, we may extend the definition of the
$Q$-invariant to an even larger subset of $W(\Q)$ than $W_0(\Q)$.
We have the following proposition.

\begin{proposition}\label{prop:extendQeven}
Let $B\in W_0(\Q)$ be an element whose invariant polynomial $f(x)$ is
irreducible over $\Q$ and, when $n\geq 4$, does not factor as $g(x)\bar{g}(x)$ over some quadratic extension of $\Q$. Then for every $B'\in W_0(\Q)$ such that $B'$ is $G(\Z)$-equivalent to $B$, we have
$Q(B')=\pm Q(B)$.
\end{proposition}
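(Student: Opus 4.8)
The plan is to mimic the proof of Proposition~\ref{prop:extendQ} from the odd case, replacing the triviality of $J_f[2](\Q)$ there with the corresponding triviality statement available here under the stated hypotheses. First I would suppose $B' = \gamma\cdot B$ with $\gamma\in G(\Z)$ and $B, B'\in W_0(\Q)$. Since $B\in W_0(\Q)$ has nonzero discriminant, we know from the discussion preceding the proposition that $B$ is distinguished, being witnessed by the flag $Y_g\subset Y_{g+1}$ with $Y_{g+1}$ isotropic for $A_0$ and $Y_g$ isotropic for $B$. Similarly $B'\in W_0(\Q)$ is distinguished, witnessed by the standard flag, so $\gamma^t(Y_g)\subset \gamma^t(Y_{g+1})$ is a flag over $\Q$ with $\gamma^t(Y_{g+1})$ isotropic for $A_0$ (since $\gamma\in G = \PSO(A_0)$, a lift preserves $A_0$ up to the central $\mu_2$, which acts trivially on subspaces) and $\gamma^t(Y_g)$ isotropic for $B$ (because $B' = \gamma\cdot B = \gamma B\gamma^t$, so $B$ restricted to $\gamma^t(Y_g)$ corresponds to $B'$ restricted to $Y_g$, which vanishes).

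Next I would invoke~\cite[Proposition~2.32]{W}: the set of such flags for $B$ over $\Q$ is in bijection with $J_f[2](\Q)$. Under the hypothesis that $f$ is irreducible over $\Q$ and (when $n\geq4$) does not factor as $g(x)\bar g(x)$ over a quadratic extension, the description of $J_f[2](\Q)$ via even-degree factorizations of $f$ from~\cite[Proposition~22]{BGWhyper} shows $J_f[2](\Q)$ is trivial. Hence there is only one such flag, so $\gamma^t(Y_g) = Y_g$ and $\gamma^t(Y_{g+1}) = Y_{g+1}$, i.e.\ $\gamma^t$ preserves the flag $Y_g\subset Y_{g+1}$, which by definition means $\gamma\in G_0(\Z)$. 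Then~\eqref{eq:weightG_0even} gives $Q(\gamma\cdot B) = \det(\gamma_1)\alpha^{-g}Q(B)$, and since $\gamma$ has integer entries and determinant (as an element of $\SO_n$) equal to $1$, the scalars $\det(\gamma_1)$ and $\alpha$ are units in $\Z$, hence $\pm1$; therefore $Q(B') = \pm Q(B)$.

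The one point requiring a little care — and the likely main obstacle — is the passage from $\PSO_n$ to $\SO_n$: an element $\gamma\in G(\Z) = \PSO_n(\Z)$ need not lift to an integral element of $\SO_n(\Z)$, only to an element of $\SO_n(\Z[1/2])$ or to $\pm$ a matrix, so one must check that the argument identifying $\gamma$ with an element preserving the flag, and the unit computation on $\det(\gamma_1)$ and $\alpha$, go through with a possibly half-integral or sign-ambiguous lift. This is handled by observing that the $\mu_2$ ambiguity acts trivially on flags and on $W$, so the flag-uniqueness argument is insensitive to it, and that~\eqref{eq:weightG_0even} only depends on $\gamma$ through its image in $G_0$, whose relevant block scalars $\det(\gamma_1)$ and $\alpha$ are genuine invariants of the $G_0$-action on $W_0$ and must be $\pm1$ because $\gamma\in G(\Z)$ maps the lattice $W_0(\Z)$ (well, $W(\Z)$) to itself. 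Everything else is a direct transcription of the odd-degree argument, so I would keep the writeup to a few lines, citing Proposition~\ref{prop:extendQ} for the parallel structure.
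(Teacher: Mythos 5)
Your argument is correct and is essentially the paper's own proof: the factorization hypothesis forces $J_f[2](\Q)$ to be trivial via \cite[Proposition~22]{BGWhyper}, the uniqueness of the isotropic flag from \cite[Proposition~2.32]{W} then forces $\gamma\in G_0(\Z)$, and \eqref{eq:weightG_0even} with unit block scalars gives $Q(B')=\pm Q(B)$, exactly as in the odd case of Proposition~\ref{prop:extendQ}. Your extra care about the $\mu_2$/$\PSO$ lift is a reasonable elaboration of a point the paper leaves implicit, but it does not change the route.
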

\begin{proof}
The assumption on the factorization property of $f(x)$ implies that $J_f[2](\Q)$ is trivial.
The proof is now identical to that of Proposition~\ref{prop:extendQ}.
\end{proof}

We may thus define the $|Q|$-{\it invariant} for any element $B\in W(\Q)$ that is $G(\Z)$-equivalent to some $B'\in W_0(\Q)$ and whose invariant polynomial is irreducible over $\Q$ and does not factor as $g(x)\bar{g}(x)$ over any quadratic extension of $\Q$; indeed, we set $|Q|(B):=|Q(B')|$. By Proposition~\ref{prop:extendQeven}, this definition of $|Q|(B)$ is independent of  the choice of $B'$. We note again that all such elements $B\in W(\Q)$ are distinguished.

\subsection{Embedding $\U_m^\w$ into $\frac14W(\Z)$}\label{sembedeven}

Let $m$ be a positive squarefree integer and let $f$ be an monic
integer polynomial whose discriminant is weakly divisible by
$m^2$. Then as proved in \S\ref{sembedodd}, there exists an integer
$\ell$ such that $f(x+\ell)$ has the form
$$f(x+\ell) = x^n + c_1x^{n-1} + \cdots + c_{n-2}x^2 + mc_{n-1}x + m^2c_n.$$

Consider the following matrix:
\begin{equation}\label{mat2}
 B_m(c_1,\ldots,c_n) =
 \left(\!\!\!\!\!\begin{array}{cccccccccc}&&&&&&&&m&0\\[.065in]&&&&&&&\iddots&\:\;\iddots& \\[.025in]&&&&&&1&\;\;\iddots&&\\[.185in] &&&&&\,1&0&&&\\[.185in] &&&&\:\;1\;&-c_1/2&&&& \\[.175in]&&&1&\!\!-c_1/2\,&\!\!c_1^2/4\!-\!c_2\!\!&-c_3/2&&&\\[.175in]&&1&\;\;\;0\;\;\;&&-c_3/2&-c_4&\!\!\!-c_5/2\!\!\!&&\\[.085in]&\iddots&\;\;\:\iddots\:\;\;&&&&-c_5/2&-c_6&\ddots&\\[.0125in] \;\;\;m\;\;\:&\;\,\,\iddots&&&&&&\ddots&\ddots&\!\!-c_{n-1}/2\!\!
   \\[.125in] 0&&&&&&&&\!\!\!\!-c_{n-1}/2\!\!\!\!&-c_n \end{array}\!\right).
\end{equation}
It follows from a direct computation that
$$f_{B_m(c_1,\ldots,c_n)}(x) = x^n + c_1x^{n-1} + \cdots + c_{n-2}x^2 +
mc_{n-1}x + m^2c_n.$$ We set $\sigma_m(f) := B_m(c_1,\ldots,c_n) +
\ell A_0\in \frac14 W(\Z)$. Then evidently $f_{\sigma_m(f)}=f.$ A direct computation again shows that
$Q(B_m(c_1,\ldots,c_n))=m$. Since the $Q$-invariant on $2\otimes
g\otimes (g+1)$ is $\SL_2$-invariant, we conclude
that $$Q(\sigma_m(f))=m.$$

\begin{theorem}\label{th:mapeven}
Let $m$ be a positive squarefree integer. There exists a map
$\sigma_m:\U_m^\w\to \frac14W(\Z)$ such that for every $f\in \U_m^\w$,
\begin{equation}\label{eq:liftQe}
f_{\sigma_m(f)}=f,\qquad Q(\sigma_m(f)) = m.
\end{equation}
\end{theorem}

Let $\LL$ be the set of elements $v\in \frac14W(\Z)$ that are $G(\Z)$-equivalent to some elements of $\frac14 W_0(\Z)$ and such that
the invariant polynomial of $v$ is irreducible
over $\Q$ and does not factor as $g(x)\bar{g}(x)$ over some quadratic extension of $\Q$.
Then by the remark following Proposition~\ref{prop:extendQeven}, we may view $|Q|$ as a function also on $\LL$.
Let $\U_m^{\w,\irr}$ denote the set of polynomials in $\U_m^\w$
that are irreducible
over $\Q$ and do not factor as $g(x)\bar{g}(x)$ over any quadratic extension of $\Q$. Then we have the following immediate consequence of Theorem~\ref{th:mapeven}:

\begin{theorem}\label{keymaporbiteven}
Let $m$ be a positive squarefree integer. There exists an injective map
$$\bar{\sigma}_m:\U_m^{\w,\irr}\to G(\Z)\backslash\LL$$ such that
$f_{\bar{\sigma}_m(f)}=f$ for every $f\in \U_m^\w$. Furthermore, every element
in every orbit in the image of $\bar{\sigma}_m$ has $|Q|$-invariant $m$.
\end{theorem}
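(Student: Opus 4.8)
The plan is to deduce Theorem~\ref{keymaporbiteven} formally from Theorem~\ref{th:mapeven} and Proposition~\ref{prop:extendQeven}, in exact parallel with how Theorem~\ref{keymaporbit} follows from Theorem~\ref{keymap} in the odd case. First I would take the map $\sigma_m:\U_m^\w\to\frac14W(\Z)$ produced by Theorem~\ref{th:mapeven}; its image lies in $\frac14W_0(\Z)$, it satisfies $f_{\sigma_m(f)}=f$, and it satisfies $|Q(\sigma_m(f))|=m$. Restricting the domain to $\U_m^{\w,\irr}$, I would then verify that $\sigma_m(f)\in\LL$ for every $f\in\U_m^{\w,\irr}$: by definition such an $f$ is irreducible over $\Q$ and does not factor as $g(x)\bar g(x)$ over any quadratic extension of $\Q$, so $\sigma_m(f)$ is an element of $\frac14W_0(\Z)$ whose invariant polynomial meets exactly the two conditions in the definition of $\LL$. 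Composing with the natural projection $\LL\to G(\Z)\backslash\LL$ yields $\bar{\sigma}_m$, and $f_{\bar{\sigma}_m(f)}=f_{\sigma_m(f)}=f$ since $f_B$ depends only on the $G$-orbit of $B$.

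For the assertion about the $|Q|$-invariant, I would invoke Proposition~\ref{prop:extendQeven} together with the remark immediately following it: because the invariant polynomial of every $v\in\LL$ is irreducible over $\Q$ and (when $n\geq 4$) does not factor as $g(x)\bar g(x)$ over a quadratic extension, the group $J_f[2](\Q)$ is trivial, so $|Q|$ is a well-defined function on all of $\LL$ and is constant on $G(\Z)$-orbits. Consequently every element $B$ in the orbit $\bar{\sigma}_m(f)=G(\Z)\cdot\sigma_m(f)$ satisfies $|Q|(B)=|Q|(\sigma_m(f))=m$, where the last equality is part of the conclusion of Theorem~\ref{th:mapeven}.

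The only step demanding attention — and the reason $\U_m^{\w,\irr}$ in the even case carries the extra non-factorization hypothesis absent in the odd case — is the verification that $\sigma_m(f)$ truly lies in $\LL$. In the odd case, irreducibility of $f$ alone forces $J_f[2](\Q)$ to be trivial and hence forces the uniqueness of the isotropic subspace used in Proposition~\ref{prop:extendQ}; in the even case a factorization $f=g\bar g$ over a quadratic field would contribute a nontrivial two-torsion point, breaking the uniqueness of the flag $Y_g\subset Y_{g+1}$ that underlies Proposition~\ref{prop:extendQeven}. Once this condition is incorporated into the definition of $\U_m^{\w,\irr}$, as it has been, there is no genuine obstacle, and the theorem is a direct corollary of the two cited results.
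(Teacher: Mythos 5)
Your proposal is correct and matches the paper's reasoning: the paper states this theorem as an immediate consequence of Theorem~\ref{th:mapeven}, obtained exactly as you describe by restricting $\sigma_m$ to $\U_m^{\w,\irr}$ (so the image lies in $\LL$), projecting to $G(\Z)$-orbits, and using Proposition~\ref{prop:extendQeven} and the remark following it to see that $|Q|$ is well defined and constant on orbits, hence equal to $m$ throughout each orbit in the image. Your observation about why the extra non-factorization hypothesis enters in the even case is also the correct explanation and consistent with the paper.
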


The number of monic integer polynomials having height less than $X$ that are reducible or factor as $g(x)\bar{g}(x)$ over some quadratic extension of $\Q$ is of a strictly smaller order of magnitude than the total number of such polynomials (see, e.g., Proposition~\ref{propredboundall}). Thus to prove Theorem~\ref{thm:mainestimate}(b), it suffices to count the number of elements in
$\U_m^{\w,\irr}$ having height less than $X$ over all $m>M$, which, by Theorem~\ref{keymaporbiteven}, we may do by counting
$G(\Z)$-orbits on $\LL\subset \frac14W(\Z)$ having height less than $X$ and $|Q|$-invariant greater than $M$.
More precisely, let $N(\LL;M;X)$ denote the number of $G(\Z)$-equivalence classes of elements in
$\LL$ whose $|Q|$-invariant is greater than $M$ and whose height is
less than $X$. We obtain a bound for $N(\LL;M;X)$ using the same method as in Section 2.

\subsection{Counting $G(\Z)$-orbits in $\frac14W(\Z)$}\label{scoeffeven}

The counting problem for the representation $W$ of $G$ is studied in \cite{SW}. In this section, we recall some of the set up and results of \cite{SW}.

Let $R$ be a fundamental domain for the action of $G(\R)$ on the elements of $W(\R)$ having nonzero discriminant and height bounded by $1$ as constructed in \cite[\S4.1]{SW}. Let $\FF$ be a fundamental set for the left multiplication action of $G(\Z)$ on $G(\R)$ obtained using the Iwasawa decomposition of $G(\R)$. More explicitly, we have
$$
G(\R)=N(\R)TK,
$$ where $N$ is a unipotent group consisting of lower triangular matrices, $K$ is compact, and $T$ is the split torus of $G$ given by
\begin{equation*}
T=
\left\{\left(\begin{array}{ccccccc}
 t_1^{-1}&&&&&&\\
&\ddots &&&&&  \\
 && t_{g+1}^{-1} &&&&\\
 &&&& \!\!\!\!t_{g+1} &&\\
&&&&&\!\!\!\!\ddots &  \\
& &&&&& \!\!t_{1}
\end{array}\right)\right\}.
\end{equation*}
We may also make the following change of variables.
For $1\leq
i\leq g$, define $s_i$ to be
$$
s_i=t_i/t_{i+1},
$$
and let $s_g=t_gt_{g+1}$.
We denote an element of $T$ with coordinates $t_i$
(resp.\ $s_i$) by $(t)$ (resp.\ $(s)$). We may take $\FF$ to be contained in a {\it Siegel set}, i.e., contained in $N'T'K$,
where $N'$ consists of elements in $N(\R)$ whose coefficients are
absolutely bounded and $T'\subset T$ consists of elements in $(s)\in
T$ with $s_i\geq c$ for some positive constant $c$.

Let $\GG_1$ be a compact left $K$-invariant set in $G(\R)$ which is the closure of nonempty open set. Then as in Section 2, we have
\begin{equation}
  N(\LL;M;X)\ll \int_{\gamma\in\FF} \#\{B\in((\gamma \GG_1)\cdot (XR))\cap\LL:|Q|(B)>M\}
  d\gamma,
\end{equation}
where the implied constant depends only on $\GG_1$ and $R$, and where $d\gamma$ is a Haar measure on $G(\R)$ given by
$$
d\gamma=dn\,\delta(s)d^\times s\,dk,
$$ where $dn$ is a Haar measure on the unipotent group $N(\R)$,
$dk$ is a Haar measure on the compact group~$K$, $d^\times s$ is
given by
$$
d^\times s:=\prod_{i=1}^{g+1}\frac{ds_i}{s_i},
$$
and
$\delta(s)$ is given by
\begin{equation}\label{eqhaareven}
  \delta(s)=\prod_{k=1}^{g-1} s_k^{k^2-2kg-k}\cdot (s_gs_{g+1})^{-g(g+1)/2};
\end{equation}
see~\cite[(20)]{SW}.

Since $s_i\geq c$ for every $i$, there exists a compact subset $N''$ of
$N(\R)$ containing $(t)^{-1}N'\,(t)$ for all $t\in T'$. Since $N''$, $K$, $G_0$ are compact and $R$ is bounded, the set $E=N''KG_0R$ is bounded. Then we have
\begin{equation}\label{eq:REe}
N(\LL;M;X)\ll \int_{s_i\gg 1} \#\{B\in((s)\cdot XE)\cap\LL:|Q|(B)>M\}\delta(s)d^\times s.
\end{equation}

As before, we denote the coordinates of $W$ by $b_{ij}$, for $1\leq
i\leq j\leq n$, and we denote
the $T$-weight of a coordinate $\alpha$ on $W$, or a
product $\alpha$ of powers of such coordinates, by $w(\alpha)$.
We compute the weights of the coefficients $b_{ij}$ to be
\begin{equation}\label{wbij2}
w(b_{ij})=\left\{
\begin{array}{rcl}
t_i^{-1}t_j^{-1} &\mbox{ if }& i,j\leq g+1\\
t_i^{-1}t_{n-j+1} &\mbox{ if }& i\leq g+1,\; j>g+1\\
t_{n-i+1}t_{n-j+1} &\mbox{ if }& i,j>g+1.
\end{array}
\right.
\end{equation}
Then the $(i,j)$-entry of any $B\in (s)\cdot XE$ is bounded by $Xw(b_{ij})$, up to a multiplicative constant depending only on $\GG_1$ and $R$.

Let $W_{00}\subset W$ denote the space of symmetric matrices $B$
such that $b_{ij}=0$ for $i+j < n$. Let $W^\dist$ denote the subset of $W(\Q)$ consisting of $\Q$-distinguished elements. Then $\LL$ is a subset of $W^\dist$. It was shown in \cite[\S4.2]{SW}
that most of the lattice points in $W^\dist$ lie inside $W_{00}$. More precisely, we have the following estimates from \cite[Proposition 21 and 23]{SW}.

\begin{proposition}\label{prop:sw6}
We have
\begin{eqnarray}
  \int_{s_i\gg 1} \#\{B\in((s)\cdot XE)\cap({\textstyle\frac14} W(\Z)\setminus{\textstyle\frac14} W_{00}(\Z)):b_{11}=0\}\delta(s)d^\times s &=& O_\epsilon(X^{n(n+1)/2-1+\epsilon}) \\
  \label{eq:selbergpree}\int_{s_i\gg 1} \#\{B\in((s)\cdot XE)\cap\LL:b_{11}\neq 0\}\delta(s)d^\times s &=& o(X^{n(n+1)/2}).
\end{eqnarray}
\end{proposition}

Therefore, to prove Theorem~\ref{thm:mainestimate}(b) when $n=2g+2$ is even, it remains to obtain a power saving improvement of \eqref{eq:selbergpree} and estimate
\begin{equation}\label{eq:Qinve}
\int_{s_i\gg 1} \#\{B\in ((s)\cdot XE)\cap \LL\cap
{\textstyle\frac14} W_{00}(\Z):|Q|(B)>M\}\delta(s)d^\times s.
\end{equation}

\subsection{Proof of Theorem~\ref{thm:mainestimate}(b) for even $n$}\label{sgomeven}

We begin with a power saving improvement of \eqref{eq:selbergpree}.

\begin{proposition}\label{propeven}
  We have
  \begin{equation*}
\displaystyle\int_{s_i\gg 1} \#\{B\in ((s)\cdot XE)\cap W^\dist \cap {\textstyle\frac14} W(\Z): b_{11}\neq 0\}\delta(s)d^\times s =O_\epsilon(X^{n(n+1)/2-1/5+\epsilon}).
  \end{equation*}
\end{proposition}
\begin{proof}
In the proof of \cite[Proposition 23]{SW}, it is shown that the
$p$-adic density of elements in $W(\Z_p)$ that are
$\Q_p$-distinguished is bounded uniformly away from $1$.  Then an
application of the Selberg sieve exactly as in \cite{ShTs} yields the
result.
\end{proof}

We now estimate \eqref{eq:Qinve}. The $Q$-invariant can also be given
a weight by viewing the torus $T$ as sitting inside $G_0$ and using
\eqref{eq:weightG_0even}. Namely,
\begin{equation}\label{eqQweighteven}
w(Q)=(t_1\cdots t_g)^{-1}t_{g+1}^g=\prod_{k=1}^g s_k^k.
\end{equation}
Since the polynomial $Q$ is homogeneous of degree $g(g+1)/2$ in the
coefficients of $W_0$, we see that the $Q$-invariant of any $B\in
(s)\cdot XE$ is bounded by $X^{g(g+1)/2}w(Q)$, up to a multiplicative
constant depending only on $G_0$ and $R$.

\begin{proposition}\label{proplargeQboundeven}
We have
\begin{equation*}
\int_{s_i\gg 1} \#\{B\in ((s)\cdot(XE))\cap \LL\cap {\textstyle\frac14} W_{00}(\Z):|Q|(B)>M\}\delta(s)d^\times s=O(X^{n(n+1)/2}\log^2 X/M).
\end{equation*}
\end{proposition}
\begin{proof}
Analogous to the proof of
Proposition \ref{proplargeQbound}, in order for the set
$\{B\in((s)\cdot (XE))\cap \LL\cap \frac14 W_{00}(\Z):|Q|(B)>M\}$ to be nonempty,
the following conditions must be satisfied:
\begin{equation}\label{eqscond}
\begin{array}{rcl}
Xs_i^{-1}&\gg& 1,\\[.1in]
Xs_gs_{g+1}^{-1}&\gg& 1,\\[.1in]
X^{g(g+1)/2}w(Q)&\gg& M.
\end{array}
\end{equation}

Let $S$ denote the set of coordinates of $W_{00}$, i.e.,
$S=\{b_{ij}:i+j\geq n\}$. Let $T_{X,M}$ denote the set of~$(s)$
satisfying $s_i\gg 1$ and the conditions of \eqref{eqscond}. Then we have
\begin{eqnarray*}
&&\displaystyle\int_{s_i\gg 1} \#\{B\in((s)\cdot (XE))\cap \LL\cap {\textstyle\frac12} W_{00}(\Z):|Q|(B)>M\}\delta(s)d^\times s \\
&\ll&\displaystyle\int_{(s)\in T_{X,M}} \big(\prod_{\alpha\in S}(Xw(\alpha))\big)\delta(s)d^\times s \\
&\ll&\displaystyle\int_{(s)\in T_{X,M}} X^{n(n+1)/2-g(g+1)}\prod_{k=1}^{g-1}s_k^{2k-1}\cdot s_g^{g-1}s_{g+1}^{g}d^\times s \\
&\ll&\displaystyle\frac{1}{M}\int_{(s)\in T_{X,M}} X^{n(n+1)/2-g(g+1)/2}w(Q)\prod_{k=1}^{g-1}s_k^{2k-1}\cdot s_g^{g-1}s_{g+1}^{g}d^\times s \\
&\ll&\displaystyle\frac{1}{M}\int_{(s)\in T_{X,M}} X^{n(n+1)/2-g(g+1)/2}\prod_{k=1}^{g-1}s_k^{k-1}\cdot s_g^{-1}s_{g+1}^{g}d^\times s \\
&\ll&\displaystyle\frac{1}{M}\int_{(s)\in T_{X,M}} X^{n(n+1)/2-g(g+1)/2+g}\prod_{k=1}^{g-1}s_k^{k-1}\cdot s_g^{g-1}d^\times s \\
&\ll&\displaystyle\frac{1}{M}X^{n(n+1)/2}\log^2(X),
\end{eqnarray*}
where the first inequality follows from the fact that $Xw(b_{ij})\gg1$ for all $b_{ij}\in S$ when $(s)$ is in the range $1\ll s_i\ll X$, the second inequality follows from the definition \eqref{eqhaareven} of $\delta(s)$ and the computation (\ref{wbij2}) of the weights of the coordinates $b_{ij}$, the third inequality follows from the fact that $X^{g(g+1)/2}w(Q) \gg M$,
the fourth inequality follows from the computation of the weight
of $Q$ in \eqref{eqQweighteven}, the fifth inequality comes from multiplying by the factor $(Xs_gs_{g+1}^{-1})^g\gg1$, and  the $\log^2
X$ factor in the last inequality comes from the integrals over $s_1$ and $s_{g+1}$.
\end{proof}

The estimate in Theorem \ref{thm:mainestimate}(b) for even $n$ now
follows from Theorem \ref{keymaporbiteven} and Propositions~\ref{prop:sw6},
\ref{propeven} and \ref{proplargeQboundeven}, in conjunction with the
bound on the number of reducible polynomials proved in
Proposition~\ref{propredboundall}.

\section{Proof of the main theorems}\label{sec:sieve}

In this section, we prove a result from which Theorems \ref{polydisc2}
and \ref{polydiscmax2} immediately follow. Let $\Sigma=(\Sigma_v)_v$
be a collection of sets $\Sigma_v\subset \Vmn(\Z_v)$ indexed by places
$v$ of $\Q$, such that $\Sigma_p$ is defined by congruence conditions
modulo some power of $p$ for any finite prime $p$ and $\Sigma_\infty\subset\Vmn(\R)$ consists
of all degree-$n$ polynomials $f$ such that the number of real roots
of $f$ lies in some fixed nonempty subset of
$\{0,\ldots,n\}$. Such a set is called a collection of local specifications. Associate to each collection $\Sigma$ the subset
$\V(\Sigma)$ of $\Vmn(\Z)$ consisting of all elements $f$ that satisfy
$f\in\Sigma_v$ for all places $v$.  For any positive integer $\kappa$, we say that a collection
$\Sigma$ of local specifications is $\kappa$-{\it acceptable} if $\Sigma_p$ is defined by congruence conditions modulo $p^\kappa$ for all primes $p$; and for all
sufficiently large primes $p$, the sets $\Sigma_p$ contain every
element $f\in\Vmn(\Z_p)$ with $p^2\nmid\Delta(f)$. The local specifications corresponding to Theorems \ref{polydisc2}
and \ref{polydiscmax2} are $2$-acceptable. For a set $S\subset \Vmn(\Z)$, let $S_X$
denote the set of elements in $S$ with height bounded by~$X$. Then we have the
following theorem:

\begin{theorem}\label{thmaingencong}
Let $\kappa$ be a positive integer and let $\Sigma$ be a $\kappa$-acceptable collection of local specifications. Then
\begin{equation*}
\#\V(\Sigma)_X=\Vol(\Sigma_{\infty,H<1})\prod_p\Vol(\Sigma_p)\cdot
X^{n(n+1)/2}+O_\epsilon(X^{n(n+1)/2-\min\{1/5,1/(2\kappa)\}+\epsilon}),
\end{equation*}
where $\Sigma_{\infty,H<1}$ is the set of elements in $\Sigma_\infty$
having height less than $1$, the volumes of sets in $\Vmn(\Z_p)$
$($resp.\ $\Vmn(\R))$ are computed with respect to the Haar-measures
normalized so that $\Vmn(\Z_p)$ has volume~$1$ $($resp.\ $\Vmn(\Z)$ has
covolume~1$)$, and where the implied constant depends only on $n$ and $\Sigma$.
\end{theorem}

This section is organized as follows. First in \S\ref{sec:sieve}.1 we
prove some estimates for the number of reducible elements in
$\Vmn(\Z)$ having bounded height.  Then in \S\ref{sec:sieve}.2, we
prove a uniformity estimate on polynomials whose discriminants are
divisible by a large square.  We then prove Theorem~\ref{thmaingencong} by
using this uniformity estimate and a squarefree sieve, from which we then deduce Theorems \ref{polydisc2} and
\ref{polydiscmax2}.

\subsection{Estimates on reducible forms}

Let $\Vmn(\Z)$ denote the set of monic integer polynomials of
degree~$n$. Let $\Vmn(\Z)^\red$ denote the subset of polynomials that
are reducible or when $n\geq 4$, factor as $g(x)\bar{g}(x)$ over some
quadratic extension of~$\Q$.   We first
give a power saving bound for the number of polynomials in
$\Vmn(\Z)^\red$ having bounded height. We start with the following
lemma.

\begin{lemma}\label{lemlin}
The number of elements in $\Vmn(\Z)_X$ that have a rational linear factor
is bounded by $O(X^{n(n+1)/2-n+1}\log X)$.
\end{lemma}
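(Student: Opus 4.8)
The plan is to count monic integer polynomials $f(x)=x^n+a_1x^{n-1}+\cdots+a_n$ of height at most $X$ having a rational linear factor, i.e., an integer root $r$, so that $f(x)=(x-r)h(x)$ for some monic integer polynomial $h(x)=x^{n-1}+b_1x^{n-2}+\cdots+b_{n-1}$ of degree $n-1$. First I would bound the size of any integer root: if $f(r)=0$ and $H(f)<X$, then each $|a_i|<X^i$, and a standard Cauchy-type bound on roots gives $|r|\ll X$. So $r$ ranges over $O(X)$ values. The idea is then to fix $r$ and count the number of possible cofactors $h$ with $H(f)<X$, where $f=(x-r)h$.

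The key step is to understand how the height constraint on $f$ restricts $h$. Writing out the coefficients, $a_i = b_i - r b_{i-1}$ (with the conventions $b_0=1$, $b_n=0$). For $|r|\ll X$ fixed, I want to show the number of admissible tuples $(b_1,\dots,b_{n-1})$ is $O(X^{n(n-1)/2}\log X \cdot X^{-1})$ or so; combined with the $O(X)$ choices of $r$ this would give the claimed $O(X^{n(n+1)/2-n+1}\log X)$. The cleanest way is to induct on the degree: the number of monic integer polynomials of degree $n-1$ and height less than $Y$ is $O(Y^{n(n-1)/2})$, and the point is that the height of $h$ cannot be much larger than $X/\max(1,|r|)$ in an appropriate weighted sense — more precisely, from $a_i = b_i - rb_{i-1}$ one gets, by descending induction on $i$, that $|b_i| \ll (|r|+X)^{i} \ll \max(|r|, X)^i$, but one needs the sharper statement that once $|r|$ is comparable to $X$ the coefficients $b_i$ are squeezed into a box of volume $\ll (X/|r|)^{\binom{n-1}{2}+\cdots}$. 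Summing the resulting bound $\sum_{r} (X/\max(1,|r|))^{n(n-1)/2}$ over $|r|\ll X$ produces the $\log X$ (from the $r$ near $X$ contributing $\sum 1/r$) and the exponent $n(n+1)/2-n+1 = n(n-1)/2+1$.

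I expect the main obstacle to be bookkeeping the dependence of $H(h)$ on $r$ correctly: one must show that when $|r|$ is large (close to $X$), the coefficients $b_1,\dots,b_{n-1}$ are forced into a small region, and the natural recursion $b_i = a_i + r b_{i-1}$ expresses $b_i$ in terms of $r$ and $a_1,\dots,a_i$ — so fixing $r$ and the $a_i$ determines $h$, but one wants instead to fix $r$ and range over $h$ with the constraint $|a_i| = |b_i - rb_{i-1}| < X^i$ for all $i$. The honest way to handle this is: for each fixed $r$ with $1\le |r| \le C X$, the map $(b_1,\ldots,b_{n-1}) \mapsto (a_1,\ldots,a_n) = (b_1 - r, \, b_2 - rb_1, \ldots, \, -rb_{n-1})$ is an injection, and the number of lattice points $(b_i)$ landing in the height box $\{|a_i| < X^i\}$ is, by a direct geometry-of-numbers estimate (e.g.\ \cite[\S2]{SW}-style counting in the skew box cut out by these inequalities), $O\big((X/|r|)^{1} \cdot (X/|r|)^{2}\cdots\big)$ up to the dominant term, which sums to the stated bound. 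For the range $|r| = O(1)$ one just uses the trivial bound $O(X^{n(n-1)/2})$ for the number of degree-$(n-1)$ cofactors of height $\ll X$, contributing $O(X^{n(n-1)/2})$, which is absorbed. Assembling these two ranges gives $O(X^{n(n-1)/2+1}\log X) = O(X^{n(n+1)/2-n+1}\log X)$, as claimed.
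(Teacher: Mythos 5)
The strategy of writing $f=(x-r)h$, bounding the integer root by $|r|\ll X$, and counting cofactors $h$ for each fixed $r$ can be made to work, but the quantitative claim at the heart of your sketch is false. For fixed $r$, the height conditions read $|b_i-rb_{i-1}|<X^i$ (with $b_0=1$) together with $|rb_{n-1}|<X^n$; given $b_1,\dots,b_{i-1}$, the $i$-th condition confines $b_i$ to an interval of length $2X^i$ centered at $rb_{i-1}$, and this length does \emph{not} shrink as $|r|$ grows. Only the last condition, $|b_{n-1}|<X^n/|r|$, carries a $1/|r|$ saving, and since $|r|\ll X$ it still permits $\gg X^{n-1}$ values. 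So the number of admissible $h$ for a fixed large $r$ is genuinely of size $X^{1+2+\cdots+(n-1)}=X^{n(n-1)/2}$, not $O\bigl((X/|r|)^{n(n-1)/2}\bigr)$ and certainly not $O(X^{n(n-1)/2-1}\log X)$: already for $n=2$ and $r=X$, the conditions $|b_1-X|<X$ and $|b_1|<X$ admit $\asymp X$ values of $b_1$, not $O(1)$. The proposed summation is also inconsistent: $\sum_{1\le|r|\le X}(X/|r|)^{n(n-1)/2}$ is dominated by small $r$ and is $O(X^{n(n-1)/2})$ for $n\ge3$ (no $\log$ arises from $r$ near $X$), which is smaller than the actual count, since taking any $|r|\le X/2$ and any $h$ with $|b_i|\le X^i/2^i$ shows the quantity in the lemma is $\gg X^{n(n-1)/2+1}$; and your intended per-$r$ bound times the $O(X)$ choices of $r$ gives $O(X^{n(n-1)/2}\log X)$, not the stated $O(X^{n(n-1)/2+1}\log X)$.

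The repair is easy and needs no saving in $|r|$ at all: the uniform per-$r$ bound $O(X^{n(n-1)/2})$, obtained from the triangular structure by choosing $b_1,\dots,b_{n-1}$ successively with $O(X^i)$ choices at step $i$, multiplied by the $O(X)$ possible roots $r\ne0$, already gives $O(X^{n(n-1)/2+1})=O(X^{n(n+1)/2-n+1})$, while the case $r=0$ (i.e.\ $a_n=0$) contributes only $O(X^{n(n+1)/2-n})$. The paper argues differently: it fixes $(a_1,\dots,a_{n-2},a_n)$ with $a_n\neq0$, uses that any integer root $r$ divides $a_n$ (the divisor count is where its $\log X$ comes from), and notes that $f(r)=0$ then determines $a_{n-1}$ uniquely; either route yields the lemma, but as written your key lattice-point estimate is not correct.
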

\begin{proof}
  Consider the polynomial
\begin{equation*}
  f(x)=x^n+a_{1}x^{n-1}+\cdots +a_n\in \Vmn(\Z)_X.
\end{equation*}
First, note that the number of such polynomials with $a_n=0$ is bounded
by $O(X^{n(n+1)/2-n})$. Next, we assume that $a_n\neq 0$. There are
$O(X^{n(n+1)/2-n+1})$ possibilities for the $(n-1)$-tuple
$(a_1,a_2,\ldots,a_{n-2},a_n)$. If $a_n\neq 0$ is fixed, then there
are $O(\log X)$ possibilities for the linear factor $x-r$ of $f(x)$,
since $r\mid a_n$. By setting $f(r)=0$, we see that the
values of $a_1,a_2,\ldots,a_{n-2},a_n$, and $r$ determine $a_{n-1}$
uniquely. The lemma follows.
\end{proof}

Following arguments of Dietmann~\cite{Dietmann}, we now prove that the number of reducible monic
integer polynomials of bounded height is negligible, with a power-saving error term.

\begin{proposition}\label{propredboundall}
We have
  \begin{equation*}
\#\Vmn(\Z)^\red_X=O(X^{n(n+1)/2-n+1}\log X).
  \end{equation*}
\end{proposition}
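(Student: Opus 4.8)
The plan is to stratify $\Vmn(\Z)^\red_X$ according to how a polynomial factors, and to bound each stratum. If $f\in\Vmn(\Z)$ is reducible over $\Q$, then by Gauss's lemma it has a monic \emph{integer} factor of degree $d$ for some $1\le d\le n/2$; and if $f=g\bar g$ over a quadratic field $K$ with $g$ monic of degree $n/2$ but $g\in\Q[x]$, then $f=g^2$ again has a monic integer factor (namely $g$) of degree $n/2$. Hence every $f\in\Vmn(\Z)^\red_X$ either (i) has a monic integer factor $f_1$, with $f=f_1f_2$, $\deg f_1=d$, $1\le d\le\lfloor n/2\rfloor$; or (ii) $n$ is even, $n\ge 4$, and $f=g\,g^\tau$ where $g\in\OO_K[x]$ is monic of degree $e:=n/2$ over a quadratic field $K$, $\tau$ generates $\Gal(K/\Q)$, and $g\notin\Q[x]$.

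In case (i), the sub-case $d=1$ is exactly Lemma~\ref{lemlin}, which already supplies the claimed bound $O(X^{n(n+1)/2-n+1}\log X)$. For $2\le d\le\lfloor n/2\rfloor$ (so $n\ge 4$, and both $d$ and $n-d$ lie in $[2,n-2]$), I would use the elementary root bound that every complex root $\zeta$ of $f(x)=x^n+a_1x^{n-1}+\cdots+a_n$ satisfies $|\zeta|\le 2\max_i|a_i|^{1/i}=2H(f)$. Since the roots of $f_1$ (and of $f_2$) form a subset of those of $f$, each coefficient of $f_1$, being up to sign an elementary symmetric function of at most $d$ roots of size $O(X)$, is $O_n(X^i)$; thus $H(f_1)\ll X$ and $H(f_2)\ll X$. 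As $f$ is determined by $(f_1,f_2)$, the number of $f\in\Vmn(\Z)_X$ with a monic integer factor of degree exactly $d$ is
\[
\ll\ X^{d(d+1)/2}\cdot X^{(n-d)(n-d+1)/2}\ =\ X^{\,d^2-nd+n(n+1)/2},
\]
and $d^2-nd+n(n+1)/2\le n(n+1)/2-n$ throughout $2\le d\le n-2$, since $d^2-nd+n\le 0$ there (the upward parabola $d^2-nd+n$ attains its maximum on $[2,n-2]$ at the endpoints, where its value is $4-n\le 0$). Summing over the finitely many such $d$, case (i) contributes $O(X^{n(n+1)/2-n})$.

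For case (ii), write $g=x^e+b_1x^{e-1}+\cdots+b_e$ with $b_i\in\OO_K$. Applying either embedding $K\hookrightarrow\C$ to $g$ yields a monic factor of $f$ in $\C[x]$, whose roots are among those of $f$ and hence $O(X)$ in absolute value; therefore \emph{both} archimedean absolute values of each $b_i$ are $O(X^i)$. Writing $\OO_K=\Z[\omega]$ with $\omega-\omega^\tau=\sqrt{d_K}$, set $s:=g+g^\tau\in\Z[x]$ (of degree $e$, leading coefficient $2$, remaining coefficients $\ll X^i$) and $g-g^\tau=\sqrt{d_K}\,t(x)$ with $t\in\Z[x]$ of degree $\le e-1$ and coefficients $\ll X^i/\sqrt{|d_K|}$; then $4f=s^2-d_K\,t^2$, so $f$ is determined by the triple $(s,d_K,t)$, and since $t\not\equiv 0$, comparing the top coefficients of $d_K t^2=s^2-4f$ forces $|d_K|\ll X^{n}$. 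The number of such $f$ is therefore at most
\[
\sum_{|d_K|\ll X^{n}}\ X^{e(e+1)/2}\ \prod_{i=1}^{e}\bigl(X^i/\sqrt{|d_K|}+1\bigr),
\]
and splitting the sum into dyadic ranges of $|d_K|$ and estimating each range shows that the overall exponent of $X$ stays strictly below $n(n+1)/2-n+1$ (the principal contribution, from quadratic fields with $|d_K|$ small, is $\ll X^{n(n+2)/4}\log X$, and the slack $n(n+1)/2-n+1-n(n+2)/4=(n-2)^2/4$ is positive for $n\ge 3$; the remaining ranges are checked similarly to come out below the target). Thus case (ii) contributes $O(X^{n(n+1)/2-n+1})$.

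Adding the three contributions yields $\#\Vmn(\Z)^\red_X=O(X^{n(n+1)/2-n+1}\log X)$, with the dominant term coming from polynomials having a rational linear factor. The only genuinely delicate point is the bookkeeping in case (ii): the quadratic field over which $f$ factors is not fixed, so one must carry out the count uniformly over a whole family of quadratic orders with $|d_K|$ ranging up to a power of $X$; the gap $(n-2)^2/4$ between $n(n+2)/4$ and the target exponent is precisely what allows this uniform estimate to succeed.
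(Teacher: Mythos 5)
Your proposal is correct, but it takes a genuinely different route from the paper. The paper deduces the bound from Dietmann's results on the distribution of Galois groups: apart from $O(X^{(n-1)(n-2)/2})$ exceptional tuples $(a_1,\ldots,a_{n-1})$, the polynomial $x^n+a_1x^{n-1}+\cdots+a_{n-1}x+t$ has Galois group $S_n$ over $\Q(t)$, and for each non-exceptional tuple the number of specializations $a_n$ with $|a_n|\leq X^n$ whose splitting field has Galois group of index at least $n(n-1)/2$ in $S_n$ (which covers both reducible polynomials with no rational root and the $g(x)\bar g(x)$-factorizations over quadratic extensions) is $O_\epsilon(X^{2/(n-1)+\epsilon})$ by the argument of \cite[Theorem~1]{Dit}; the rational-root case is Lemma~\ref{lemlin}, exactly as in your write-up. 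You instead count factorizations directly: for a monic integer factor of degree $2\le d\le n/2$ you bound the heights of both factors via the root bound and get $O(X^{n(n+1)/2-n})$, and for the conjugate factorization $f=g\,g^\tau$ over a quadratic field you parametrize by $4f=s^2-d_K t^2$ and sum over discriminants. Your argument is more elementary and self-contained (no input on Galois groups over $\Q(t)$ or Hilbert-irreducibility-type statements), and gives a sharper exponent for the strata with a factor of degree at least $2$; the paper's argument is shorter given \cite{Dit} and bounds all non-$S_n$ specializations at once. One caveat on your case (ii): the assertion that the exponent stays strictly below $n(n+1)/2-n+1$, with the principal contribution coming from small $|d_K|$, fails for $n=4$: there the range $|d_K|\asymp X^4$ (think of polynomials near $x^4-D$) contributes $\asymp X^{7}$, which exactly equals $n(n+1)/2-n+1=7$. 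This is still within your claimed $O(X^{n(n+1)/2-n+1})$ for case (ii), so the final bound stands, but the dyadic bookkeeping you defer does need to be carried out: with $e=n/2$, the extreme ranges give exponents $e(e+1)$ and $e(e+1)/2+2e$, both at most $2e^2-e+1=n(n+1)/2-n+1$, with equality in the second case precisely when $n=4$.
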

\begin{proof}
  First, by \cite[Lemma~2]{Dietmann}, we have that
\begin{equation}\label{eqtemppoly}
  x^n+a_1x^{n-1}+\cdots +a_{n-1}x+t
\end{equation}
has Galois group $S_n$ over $\Q(t)$ for all $(n-1)$-tuples
$(a_1,\ldots,a_{n-1})$ aside from a set $S$ of cardinality
$O(X^{(n-1)(n-2)/2})$. Hence, the number of $n$-tuples
$(a_1,\ldots,a_n)$ with height bounded by $X$ such that the Galois
group of $x^n+a_1x^{n-1}+\cdots +a_{n-1}x+t$ over $\Q(t)$ is not $S_n$
is $O(X^{(n-1)(n-2)/2}X^n) = O(X^{n(n+1)/2-n+1})$.

Next, let $H$ be a subgroup of $S_n$ that arises as the Galois group
of the splitting field of a polynomial in $\Vmn(\Z)$ with no rational
root. For reducible polynomials, we have from \cite[Lemma~4]{Dietmann} that
$H$ has index at least $n(n-1)/2$ in $S_n$. When $n\geq4$ is even and
the polynomial factors as $g(x)\bar{g}(x)$ over a quadratic extension,
the splitting field has degree at most $2(n/2)!$ and so the index of
the corresponding Galois group in $S_n$ is again at least $n(n-1)/2$.
For fixed $a_1,\ldots,a_{n-1}$ such that the polynomial
\eqref{eqtemppoly} has Galois group $S_n$ over $\Q(t)$, an argument
identical to the proof of \cite[Theorem~1]{Dietmann} implies that the
number of $a_n$ with $|a_n|\leq X^n$ such that the Galois group of the
splitting field of $x^n+a_1x^{n-1}+\cdots a_n$ over $\Q$ is $H$ is
bounded by
$$
O_\epsilon\Bigl(X^\epsilon\exp\Bigl(\frac{n}{[S_n:H]}\log X +O(1)\Bigr)\Bigr)
=O(X^{2/(n-1)+\epsilon}).
$$
In conjunction with Lemma~\ref{lemlin}, we thus obtain the estimate
  \begin{equation*}
    \#\Vmn(\Z)^\red_X=O(X^{n(n+1)/2-n+1}\log X)+O(X^{n(n+1)/2-n+1})+
    O_\epsilon(X^{n(n+1)/2-n+2/(n-1)+\epsilon}),
  \end{equation*}
and the proposition follows.
\end{proof}


\subsection{Proof of Theorem \ref{thmaingencong}}

Recall that we proved the estimates of Theorem
\ref{thm:mainestimate}(b) for odd and even $n$ in \S2 and \S3,
respectively. The estimate of Theorem \ref{thm:mainestimate}(a) is a
direct consequence of \cite[Theorem 3.5, Lemma 3.6]{geosieve} since
the discriminant polynomial on $\Vmn$ is irreducible.  For any
positive squarefree integer $m$, let $\U_m$ denote the set of all
elements in $\Vmn(\Z)$ whose discriminants are divisible by $m^2$. We
now prove the following direct consequence of Theorem \ref{thm:mainestimate}.

\begin{theorem}\label{unif}
Let $\U_{m,X}$ denote the set of elements in $\U_m$ having height
bounded by $X$. For any positive real number $M$, we have
\begin{equation}\label{equ2}
\sum_{\substack{m>M\\ m\;\mathrm{ squarefree}}}
\#\U_{m,X}=O_\epsilon(X^{n(n+1)/2+\epsilon}/\sqrt{M})+O_\epsilon(X^{n(n+1)/2-1/5+\epsilon}).
\end{equation}
\end{theorem}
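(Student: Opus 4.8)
The plan is to split the sum over squarefree $m>M$ according to the factorization of $m$ induced by the strong/weak dichotomy. Write $m = m_1 m_2$ where $m_1$ is the product of the primes $p\mid m$ for which $p^2$ strongly divides $\Delta(f)$, and $m_2$ is the product of the primes $p\mid m$ for which $p^2$ weakly divides $\Delta(f)$ (note $\gcd(m_1,m_2)=1$ since $m$ is squarefree). Then any $f\in\U_{m,X}$ lies in $\U_{m_1}^\s \cap \U_{m_2}^\w$, and conversely. Since $m = m_1 m_2 > M$, at least one of $m_1 > \sqrt M$ or $m_2 > \sqrt M$ holds. Therefore
\begin{equation*}
\sum_{\substack{m>M\\ m\ \mathrm{sqf}}}\#\U_{m,X}
\ll \sum_{\substack{m_1>\sqrt M}}\#\{f\in\U_{m_1}^\s : H(f)<X\}\cdot d(?) \;+\; \sum_{\substack{m_2>\sqrt M}}\#\{f\in\U_{m_2}^\w : H(f)<X\}\cdot d(?),
\end{equation*}
but this naive bound double-counts: a cleaner route is to fix the ``weak part'' $m_2$ (resp.\ ``strong part'' $m_1$) and note that for fixed $m_2$ the polynomials counted have discriminant divisible by $m_2^2$ weakly and by $(m/m_2)^2$ strongly, so one first applies the strong estimate Theorem~\ref{thm:mainestimate}(a) for the variable $m_1 = m/m_2$ and then sums over $m_2$, or vice versa. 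The key point is that whichever of $m_1, m_2$ exceeds $\sqrt M$, we get to invoke the corresponding part of Theorem~\ref{thm:mainestimate} with parameter $\sqrt M$ in place of $M$.

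Concretely, I would argue as follows. First handle the contribution where $m_1 > \sqrt M$: here $f\in\U_{m_1}^\s$ for some $m_1>\sqrt M$, so by Theorem~\ref{thm:mainestimate}(a) (with $M$ replaced by $\sqrt M$) this contributes $O_\epsilon(X^{n(n+1)/2+\epsilon}/\sqrt M) + O(X^{n(n-1)/2})$, and the second term is absorbed since $n(n-1)/2 \le n(n+1)/2 - 1/5$ for $n\ge 1$ (indeed for $n\ge 1$, $n(n+1)/2 - n(n-1)/2 = n \ge 1 > 1/5$). Next handle the contribution where $m_2 > \sqrt M$: here $f\in\U_{m_2}^\w$ for some $m_2>\sqrt M$, so by Theorem~\ref{thm:mainestimate}(b) (again with $\sqrt M$) this contributes $O_\epsilon(X^{n(n+1)/2+\epsilon}/\sqrt M) + O_\epsilon(X^{n(n+1)/2-1/5+\epsilon})$. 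Adding these two cases gives exactly the claimed bound. The only subtlety is to make sure the decomposition $m = m_1 m_2$ is well-defined and that $f\in\U_{m,X}$ genuinely forces $f\in\U_{m_1}^\s\cap\U_{m_2}^\w$; this follows directly from the definitions of strong and weak divisibility recalled in the introduction (every prime $p\mid m$ with $p^2\mid\Delta(f)$ divides $\Delta(f)$ either strongly or weakly, and these are mutually exclusive), together with the Chinese Remainder Theorem.

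I expect the main (and essentially only) obstacle to be a bookkeeping one: ensuring that summing the two sub-bounds does not lose a divisor-function factor $d(m)$ that would spoil the clean $X^\epsilon$ in the error term. This is handled by absorbing any such $d(m) = O_\epsilon(m^\epsilon) = O_\epsilon(X^\epsilon)$ into the $\epsilon$ in the exponent, which is harmless since the bounds in Theorem~\ref{thm:mainestimate} already carry an $X^\epsilon$ and the sum over $m_2$ (resp.\ $m_1$) of $1/m_2$ (resp.\ of the relevant weight) converges or contributes at most a $\log X$, again absorbed. Everything else is a direct quotation of Theorem~\ref{thm:mainestimate}, so no genuinely new estimate is needed here.
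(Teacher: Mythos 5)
Your argument is correct and is essentially the paper's own proof: decompose each squarefree $m>M$ as $m=m_1m_2$ into its strong and weak parts, note that one of $m_1,m_2$ exceeds $\sqrt{M}$, invoke Theorem~\ref{thm:mainestimate}(a) and (b) with parameter $\sqrt{M}$, absorb the counting multiplicity (at most the number of squarefree divisors of $\Delta(f)$, which is $O_\epsilon(X^\epsilon)$) into the $\epsilon$, and absorb the $O(X^{n(n-1)/2})$ term into $O_\epsilon(X^{n(n+1)/2-1/5+\epsilon})$. No substantive differences from the paper's treatment.
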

\begin{proof}
Note first that an element $f\in \Vmn(\Z)$ belongs to at most
$O(X^\epsilon)$ different sets $\U_m$, since $m$ is a divisor of
$\Delta(f)$. Hence it suffices to prove the bound \eqref{equ2} for the
number of elements in the union of $\U_{m,X}$ over all squarefree
integers $m>M$. Now suppose $f$ is an element in this union. Let $k_1$
denote the product of all the primes $p$ where $p^2$ strongly divides
$\Delta(f)$ and let $k_2$ denote the product of all the primes $p$
where $p^2$ weakly divides $\Delta(f)$. Then $f\in
\U_{k_1}^{(1)}\cap\U_{k_2}^{(2)}$ with $k_1k_2>M$. In other words,
$$\bigcup_{\substack{m>M\\ m\;\mathrm{ squarefree}}} \U_{m,X} \subset \bigcup_{\substack{k_1>\sqrt{M}\\ k_1\;\mathrm{ squarefree}}} \U_{k_1,X}^{(1)} \cup \bigcup_{\substack{k_2>\sqrt{M}\\ k_2\;\mathrm{ squarefree}}} \U_{k,X}^{(2)}.$$
The theorem now follows from Parts (a) and (b) of Theorem
\ref{thm:mainestimate}.
\end{proof}

We remark that the $\sqrt{M}$ in the denominator in \eqref{equ2} can
be improved to $M$. However we will be using Theorem \ref{unif} for
$M=X^{1/\kappa}$ (where $\kappa=2$ for the application to Theorems
\ref{polydisc2} and \ref{polydiscmax2}) in which case the second term
$O_\epsilon(X^{n(n+1)/2-1/5+\epsilon})$ sometimes dominates. We
outline here how to improve the denominator to $M$ for the sake of
completeness. Break up $\U_m$ into sets $\U_{m_1}^\s\cap\U_{m_2}^\w$
for positive squarefree integers $m_1,m_2$ with $m_1m_2=m$ as
above. Break the ranges of $m_1$ and $m_2$ into dyadic ranges. For
each range, we count the number of elements in
$\U_{m_1}^\s\cap\U_{m_2}^\w$ by embedding each $\U_{m_2}^\w$ into
$\frac14W(\Z)$ as in Sections \ref{sec:monicodd} and
\ref{sec:moniceven}. Earlier, we bounded the cardinality of the image
of $\U_{m_2,X}^\w$ by splitting $\frac14W(\Z)$ up into two pieces:
$\frac14W_{00}(\Z)$ and $\frac14W(\Z)\setminus\frac14W_{00}(\Z)$. The
bound on the second piece does not depend on $m_2$ and continues to be
$O_\epsilon(X^{n(n+1)/2-1/5+\epsilon})$. However for the first piece,
we now impose the further condition that elements in $\frac14
W_{00}(\Z)$ are strongly divisible by $p^2$ for all prime factors $p$
of $m_1$ and apply the quantitative version of the Ekedahl sieve as
in~\cite{geosieve}. This gives the desired additional $1/m_1$ saving,
improving the bound to
\begin{equation*}
O_\epsilon(X^{n(n+1)/2+\epsilon}/M)+O_\epsilon(X^{n(n+1)/2-1/5+\epsilon}).
\end{equation*}
The reason for counting in dyadic ranges of $m_1$ and $m_2$ is
that for both the strongly and weakly divisible cases, we count
not for a fixed $m$ but sum over all $m>M$.

\vspace{10pt}

Let $(\Sigma_v)_v$ be a $\kappa$-acceptable collection of local
specifications. Let $N$ denote a positive integer such that for every
prime $p>N$, the set $\Sigma_p$ contains every element $f\in
\Vmn(\Z_p)$ with $p^2\nmid \Delta(f)$. Let $P$ denote the product of
all primes $p\leq N$. For squarefree integers $m$, we let
$\W_m(\Sigma)$ denote the set of elements $f$ such that
$f\not\in\Sigma_p$ for all $p\mid m$ and let $m'$ denote the product
of all the prime factors of $m$ that are larger than $N$. Then $m'\geq
m/P$ and $\W_m(\Sigma)\subset \W_{m'}.$ Since $P$ depends only on
$\Sigma$, we may assume that $\log X > P$ in what follows. In other
words, we have
\begin{equation}\label{eq:haha}
\sum_{\substack{m>X^{1/\kappa}\\ m\;\mathrm{ squarefree}}}\#\W_{m}(\Sigma)_X \leq \sum_{\substack{m'>X^{1/\kappa-\epsilon}\\ m'\;\mathrm{ squarefree}}}\#\W_{m',X}.
\end{equation}

For each prime $p$, let $\theta(p)$ denote
$\Vol(\Sigma_p)$, let $\theta(\infty)$ denote
$\Vol(\Sigma_{\infty,H<1})$, and set
$\bar{\theta}(p):=1-\theta(p)$. We define
$\bar{\theta}(m)=\prod_{p\mid m}\bar{\theta}(m)$ for squarefree
integers $m$.   Let $\mu$ denote the
M\"obius function. We have
\begin{equation}\label{eqth1}
\begin{array}{rcl}
\#\V(\Sigma)_X&=&\displaystyle\sum_{m\geq 1}\mu(m)\#\W_m(\Sigma)_X\\[.2in]
&=&\displaystyle\sum_{m=1}^{X^{1/\kappa}}\mu(m)\theta(\infty)\bar{\theta}(m)X^{n(n+1)/2}
+O\Bigl(\sum_{m=1}^{X^{1/\kappa}}X^{n(n+1)/2-n}\Bigr)
+O\Bigl(\sum_{m>X^{1/\kappa}}\#\W_{m}(\Sigma)_X\Bigr)\\[.2in]
&=&\displaystyle\theta(\infty)\prod_p\theta(p)\cdot X^{n(n+1)/2}+O_\epsilon(X^{n(n+1)/2-1/(2\kappa)+\epsilon})+O_\epsilon(X^{n(n+1)/2-1/5+\epsilon}),
\end{array}
\end{equation}
where the final equality follows from \eqref{eq:haha} and Theorem \ref{unif}.
This concludes the proof of Theorem~\ref{thmaingencong}.



Finally note that Theorems \ref{polydisc2} and \ref{polydiscmax2}
follow from Theorem \ref{thmaingencong} since the corresponding
families are $2$-acceptable, and the constants $\lambda_n$ and
$\zeta(2)^{-1}$ appearing in these theorems are equal simply to
$\prod_p\lambda_n(p)$ and $\prod_p\rho_n(p)$, respectively.

\section{A lower bound on the number of degree-$n$ number fields that are monogenic / have a short generating vector}\label{latticearg}

Let $g\in \Vmn(\R)$ be a monic real polynomial of degree $n$ and
nonzero discriminant with $r$ real roots and $2s$ complex roots.  Then
$\R[x]/(g(x))$ is naturally isomorphic to $\R^n\cong \R^r\times \C^s$
as $\R$-vector spaces via its real and complex embeddings (where we
view $\C$ as $\R+\R\sqrt{-1}$).  The $\R$-vector space $\R[x]/(g(x))$
also comes equipped with a natural basis, namely
$1,\theta,\theta^2,\ldots,\theta^{n-1}$, where $\theta$ denotes the
image of $x$ in $\R[x]/(g(x))$. Let $R_g$ denote the lattice spanned
by $1,\theta,\ldots,\theta^{n-1}.$ In the case that $g$ is an integral
polynomial in $\Vmn(\Z)$, the lattice $R_g$ may be identified with the
ring $\Z[x]/(g(x))\subset\R[x]/(g(x))\subset \R^n$.

Since $g(x)$ gives a lattice in $\R^n$ in this way, we may ask whether
this basis is reduced in the sense of Minkowski, with respect to the
usual inner product on $\R^n$.\footnote{Recall that a $\Z$-basis
  $\alpha_1,\ldots,\alpha_n$ of a lattice $L$ is called {\it
    Minkowski-reduced} if successively for $i=1,\ldots, n$ the vector
  $\alpha_i$ is the shortest vector in $L$ such that
  $\alpha_1,\ldots,\alpha_i$ can be extended to a $\Z$-basis of
  $L$. Most lattices have a unique Minkowski-reduced basis.}  More
generally, for any monic real polynomial $g(x)$ of degree $n$ and
nonzero discriminant, we may ask whether the basis
$1,\theta,\theta^2,\ldots,\theta^{n-1}$ is Minkowski-reduced for the
lattice $R_g$, up to a unipotent upper-triangular transformation
over~$\Z$ (i.e., when the basis $[1\;\;\theta\;\;\theta^2\;\cdots\;
  \theta^{n-1}]$ is replaced by $[1\;\;\theta\;\;\theta^2\;\cdots\;
  \theta^{n-1}]A$ for some upper triangular $n\times n$ integer matrix
$A$ with $1$'s on the diagonal).

More precisely, given $g\in \Vmn(\R)$ of nonzero discriminant, let us
say that the corresponding basis
$1,\theta,\theta^2,\ldots,\theta^{n-1}$ of $\R^n$ is {\it
  quasi-reduced} if there exist monic integer polynomials $h_i$ of
degree~$i$, for $i=1,\ldots,n-1$, such that the basis
$1,h_1(\theta),h_2(\theta),\ldots,h_{n-1}(\theta)$ of $R_g$ is
Minkowski-reduced (so that the basis
$1,\theta,\theta^2,\ldots,\theta^{n-1}$ is Minkowski-reduced up to a unipotent
upper-triangular transformation over~$\Z$). By abuse of language, we
then call the polynomial $g$ {\it quasi-reduced} as well. We say that
$g$ is {\it strongly quasi-reduced} if in addition $\Z[x]/(g(x))$ has
a unique Minkowski-reduced basis.

The relevance of being strongly quasi-reduced is contained in the
following lemma.

\begin{lemma}\label{reducedg}
Let $g(x)$ and $g^*(x)$ be distinct monic integer polynomials of
degree $n$ and nonzero discriminant that are strongly quasi-reduced
and whose $x^{n-1}$-coefficients vanish.  Then $\Z[x]/(g(x))$ and
$\Z[x]/(g^*(x))$ are non-isomorphic rings.
\end{lemma}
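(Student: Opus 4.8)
We argue by contraposition: suppose $\phi\colon R_g\to R_{g^*}$ is a ring isomorphism; we must show $g=g^*$. The key point is that such a $\phi$ is automatically an \emph{isometry} of the lattices $R_g$ and $R_{g^*}$ with respect to the inner products pulled back from $\R^n$. Indeed, tensoring with $\Q$ extends $\phi$ to an isomorphism of $\Q$-algebras $K_g:=\Q[x]/(g(x))\xrightarrow{\sim}K_{g^*}:=\Q[x]/(g^*(x))$, and tensoring further with $\R$ gives an isomorphism of $\R$-algebras $K_g\otimes_\Q\R\xrightarrow{\sim}K_{g^*}\otimes_\Q\R$. Since $\Delta(g),\Delta(g^*)\neq 0$, each of these $\R$-algebras is isomorphic to some $\R^{r}\times\C^{s}$ with $r+2s=n$, the factors corresponding to the archimedean places; and any $\R$-algebra isomorphism between two such products must match real factors with real factors and complex factors with complex factors, acting on each factor by the identity or by complex conjugation. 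As all these operations are isometries of $\R$ and of $\C\cong\R^2$, the map $\phi$ preserves lengths exactly. Being moreover a $\Z$-module isomorphism fixing $1$, it carries Minkowski-reduced bases of $R_g$ to Minkowski-reduced bases of $R_{g^*}$.

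Write $\theta$ and $\theta^*$ for the images of $x$ in $R_g$ and $R_{g^*}$. By quasi-reducedness there are monic integer polynomials $h_i,h_i^*$ of degree $i$, for $1\leq i\leq n-1$, such that $1,h_1(\theta),\dots,h_{n-1}(\theta)$ and $1,h_1^*(\theta^*),\dots,h_{n-1}^*(\theta^*)$ are Minkowski-reduced bases of $R_g$ and $R_{g^*}$ respectively. Applying $\phi$ to the first basis (using $\phi(1)=1$) produces a Minkowski-reduced basis $1,\phi(h_1(\theta)),\dots,\phi(h_{n-1}(\theta))$ of $R_{g^*}$. Since $g^*$ is \emph{strongly} quasi-reduced, $R_{g^*}$ has a \emph{unique} Minkowski-reduced basis, so this basis must equal $1,h_1^*(\theta^*),\dots,h_{n-1}^*(\theta^*)$. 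Comparing the second entries and writing $h_1(x)=x+c$, $h_1^*(x)=x+c^*$ with $c,c^*\in\Z$, we obtain $\phi(\theta)=\theta^*+d$, where $d:=c^*-c\in\Z$.

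It remains to prove $d=0$. A $\Q$-algebra isomorphism preserves traces, so $\mathrm{Tr}_{K_g/\Q}(\theta)=\mathrm{Tr}_{K_{g^*}/\Q}(\phi(\theta))=\mathrm{Tr}_{K_{g^*}/\Q}(\theta^*)+nd$, using $\mathrm{Tr}_{K_{g^*}/\Q}(1)=n$. But $\mathrm{Tr}_{K_g/\Q}(\theta)$ and $\mathrm{Tr}_{K_{g^*}/\Q}(\theta^*)$ equal, up to sign, the $x^{n-1}$-coefficients of $g$ and of $g^*$, which vanish by hypothesis; hence $nd=0$ and $\phi(\theta)=\theta^*$. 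Finally, $g(\theta^*)=g(\phi(\theta))=\phi(g(\theta))=0$ in $R_{g^*}=\Z[x]/(g^*(x))$, so $g^*(x)\mid g(x)$ in $\Z[x]$; both being monic of degree $n$, we conclude $g=g^*$.

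The conceptual heart of the argument is the first step, namely that the mere ring structure of an order already determines its archimedean metric (equivalently, the set and types of its infinite places), so a ring isomorphism has no choice but to be an isometry of Minkowski lattices; once this is granted, everything else is bookkeeping. The place needing the most care is the second step: it is exactly the \emph{uniqueness} of the Minkowski-reduced basis (the force of the qualifier \emph{strong} here) that eliminates any residual ambiguity --- such as a reflection or permutation of the reduced basis --- and forces $\phi(\theta)$ to be an integer translate of $\theta^*$, whereupon the vanishing of the $x^{n-1}$-coefficients pins down the translation and finishes the proof.
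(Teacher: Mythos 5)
Your proof is correct and follows essentially the same route as the paper's: use strong quasi-reducedness (uniqueness of the Minkowski-reduced basis) to force $\phi(\theta)=\theta^*+c$ with $c\in\Z$, then use the vanishing $x^{n-1}$-coefficients (trace zero) to get $c=0$ and conclude $g=g^*$. The only difference is that you spell out the step the paper leaves implicit, namely that a ring isomorphism automatically induces an isometry of the lattices $R_g$ and $R_{g^*}$ via the archimedean factor decomposition, which is a worthwhile clarification but not a different argument.
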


\begin{proof}
Let $\theta$ and $\theta^*$ denote the images of $x$ in $\Z[x]/(g(x))$
and $\Z[x]/(g^*(x))$, respectively.  By the assumption that $g$ and
$g^\ast$ are strongly quasi-reduced, we have that
$1,h_1(\theta),h_2(\theta),\ldots,h_{n-1}(\theta)$ and
$1,h_1^*(\theta^*),h_2^*(\theta^*),\ldots,h^*_{n-1}(\theta^*)$ are
the unique Minkowski-reduced bases of $\Z[x]/(g(x))$ and
$\Z[x]/(g^*(x))$, respectively, for some monic integer polynomials
$h_i$ and $h_i^*$ of degree $i$ for $i=1,\ldots,n-1$.

If $\phi:\Z[x]/(g(x))\to\Z[x]/(g^*(x))$ is a ring isomorphism, then by
the uniqueness of Minkowski-reduced bases for these rings, $\phi$ must
map Minkowski basis elements to Minkowski basis elements, i.e.,
$\phi(h_i(\theta))=h_i^*(\theta^*)$ for all $i$.  In particular, this
is true for $i=1$, so $\phi(\theta)=\theta^*+c$ for some $c\in\Z$,
since $h_1$ and $h_1^*$ are monic integer linear polynomials.
Therefore $\theta$ and $\theta^*+c$ must have the same minimal
polynomial, i.e., $g(x)=g^*(x-c)$; the assumption that $\theta$ and
$\theta^*$ both have trace 0 then implies that $c=0$.  It follows that
$g(x)=g^*(x)$, a contradiction.  We conclude that $\Z[x]/(g(x))$ and
$\Z[x]/(g^*(x))$ must be non-isomorphic rings, as desired.
\end{proof}

The condition of being quasi-reduced
is fairly easy to attain:

\begin{lemma}\label{quasilemma}
If $g(x)$ is a monic real polynomial of nonzero discriminant, then
$g(\rho x)$ is quasi-reduced for any sufficiently large $\rho>0$.
\end{lemma}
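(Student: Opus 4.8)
The plan is to analyze the lattice attached to the rescaled polynomial directly through its monomial basis, exploiting the fact that the rescaling gives that basis a sharply ``staircase'' geometry. Write $\alpha_1,\dots,\alpha_n$ for the roots of $g$ (distinct, as $\Delta(g)\neq 0$), and for $0\le j\le n-1$ let $w_j\in\R^n$ be the image of $x^j$ under the archimedean embedding $\R[x]/(g(x))\hookrightarrow\R^r\times\C^s\cong\R^n$; then $w_0,\dots,w_{n-1}$ are linearly independent, since $1,\theta,\dots,\theta^{n-1}$ is an $\R$-basis of the algebra $\R[x]/(g(x))$ and the embedding is an isomorphism of $\R$-vector spaces. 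Rescaling $g$ so that its roots are multiplied by $\rho$ (that is, replacing $g(x)$ by $g(x/\rho)$ made monic --- the normalization for which $1$ can be the shortest lattice vector, as quasi-reducedness requires) replaces $w_j$ by $\rho^j w_j$. So I fix a large $\rho$, put $R_\rho:=\bigoplus_{j=0}^{n-1}\Z\,\rho^j w_j$, let $\theta_\rho$ denote the element with coordinate vector $\rho w_1$, and aim to produce monic integer polynomials $h_1,\dots,h_{n-1}$ with $\deg h_i=i$ such that $1,h_1(\theta_\rho),\dots,h_{n-1}(\theta_\rho)$ is a Minkowski-reduced basis of $R_\rho$.

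The first step I would carry out is a length bound that is uniform in all but the top coefficient. For $0\le k\le n-1$ set $\mu_k:=\dist\bigl(w_k,\,\mathrm{span}_\R(w_0,\dots,w_{k-1})\bigr)$, positive by linear independence, and put $\mu:=\min_k\mu_k>0$. Then for any nonzero $v=\sum_{j\le k}c_j\rho^j w_j\in R_\rho$ with $c_k\neq 0$ (call $k$ its \emph{leading index}), orthogonal projection onto $\mathrm{span}_\R(w_0,\dots,w_{k-1})^\perp$ annihilates the lower-order terms and gives $\|v\|\ge\mu_k|c_k|\rho^k\ge\mu\rho^k$, with no dependence on $c_0,\dots,c_{k-1}$. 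In the opposite direction, reducing $\theta_\rho^i=\rho^i w_i$ modulo the sublattice $\bigoplus_{j<i}\Z\,\rho^j w_j$ --- whose covering radius is $O(\rho^{i-1})$, being generated by vectors of length $O(\rho^{i-1})$ --- produces a monic integer polynomial $h_i$ of degree $i$ with $\|h_i(\theta_\rho)\|=O(\rho^i)$. All implied constants depend only on $g$.

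With these estimates I would run the greedy procedure defining a Minkowski-reduced basis and show by induction on $i$ that its $i$-th vector may be taken to be $\pm h_{i-1}(\theta_\rho)$ for a monic integer polynomial $h_{i-1}$ of degree $i-1$ (with $h_0=1$), so that after $i$ stages the chosen vectors span the same sublattice as $w_0,\rho w_1,\dots,\rho^{i-1}w_{i-1}$ --- a direct summand of $R_\rho$ --- the transition matrix between these two bases being triangular with $\pm 1$ on the diagonal, hence in $\GL_i(\Z)$. For $i=1$, any vector of leading index $\ge 1$ has length $\ge\mu\rho>\|w_0\|$ once $\rho$ is large, so the shortest nonzero vector of $R_\rho$ is $\pm w_0=\pm 1$. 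For the inductive step, a vector extending the span of the first $i$ chosen vectors must have leading index $\ge i$; by the second estimate there is such a vector of the form $\pm h_i(\theta_\rho)$ of length $O(\rho^i)$, whereas by the first estimate every vector of leading index $\ge i+1$ has length $\ge\mu\rho^{i+1}$, which exceeds $O(\rho^i)$ for $\rho$ large. Hence the greedily chosen vector has leading index exactly $i$, and primitivity of its image in $R_\rho/\bigoplus_{j<i}\Z\,\rho^j w_j\cong\Z^{n-i}$ forces its $\rho^i w_i$-coefficient to be $\pm 1$; so it equals $\pm\bigl(\theta_\rho^i+(\text{an integer polynomial in }\theta_\rho\text{ of degree }<i)\bigr)=\pm h_i(\theta_\rho)$. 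Since every threshold on $\rho$ that arises depends only on $g$, the lemma follows.

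I expect the main obstacle --- in fact the only step that is not routine bookkeeping --- to be the uniform length bound of the second paragraph, which is exactly where the hypothesis $\Delta(g)\neq 0$ enters: positivity of each gap $\mu_k$ is what forces a lattice vector whose top nonzero coefficient lies in the $\rho^k w_k$-slot to have length $\gg\rho^k$, no matter how large its lower coefficients grow, and this is precisely what lets one convert ``short'' into ``small leading index.'' The remaining ingredients --- the covering-radius estimate, the triangular change of basis, the comparisons of powers of $\rho$, and the primitivity and direct-summand checks in the induction --- are straightforward; the care required there is mainly to keep all the $\rho$-thresholds uniform in the coefficients and to verify that the greedily chosen vector at each stage genuinely has the claimed monic form.
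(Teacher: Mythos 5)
Your proof is correct, and it takes a genuinely more hands-on route than the paper's. The paper disposes of the lemma by appealing to reduction theory: it characterizes quasi-reducedness of the Gram matrix $Q$ via its Iwasawa decomposition $Q=(\nu\tau\kappa)I_n(\nu\tau\kappa)^T$ as the Siegel-type condition $t_i\leq c\,t_{i+1}$ on the torus part alone (no condition on the unipotent part, since quasi-reduction permits an arbitrary unipotent upper-triangular integral change of basis), and then observes that rescaling replaces $Q$ by $\Lambda Q\Lambda^T$ with $\Lambda=\mathrm{diag}(1,\rho,\dots,\rho^{n-1})$, so the torus coordinates become $(t_1,\rho t_2,\dots,\rho^{n-1}t_n)$ and the condition holds for large $\rho$. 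You instead verify the definition of Minkowski reduction from scratch: the orthogonal-projection lower bound $\|v\|\geq\mu\,\rho^k$ for a vector of leading index $k$, the covering-radius upper bound producing a monic candidate $h_i(\theta_\rho)$ of length $O(\rho^i)$, and an induction through the greedy process using primitivity in the quotient $R_\rho/\bigoplus_{j<i}\Z\rho^jw_j$ to force the leading coefficient to be $\pm1$. The shared core is of course the diagonal scaling separating the successive scales by factors of $\rho$, but the apparatus is different: the paper's argument is shorter and isolates exactly why only the torus part matters, at the cost of citing the Iwasawa/Siegel description of (quasi-)reduced forms; yours is self-contained, exhibits the polynomials $h_i$ explicitly, and makes visible where $\Delta(g)\neq0$ enters (positivity of the Gram--Schmidt gaps $\mu_k$). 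One further point in your favor: you correctly normalize the rescaling as ``roots multiplied by $\rho$'' (i.e., the monicization of $\rho^ng(x/\rho)$), which is what the paper's own proof actually uses when it works with the basis $1,\rho\theta,\dots,\rho^{n-1}\theta^{n-1}$, despite the statement's notation $g(\rho x)$. The only cosmetic imprecision is the sentence bounding $\|h_i(\theta_\rho)\|$ purely by the covering radius of the lower sublattice: the component of $\theta_\rho^i$ orthogonal to the span of the lower basis vectors is itself of size $\mu_i\rho^i$ and cannot be reduced away, but since this is still $O(\rho^i)$ the stated bound, and the rest of the argument, stand.
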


\begin{proof}
This is easily seen from the Iwasawa-decomposition description of
Minkowski reduction. Consider the fundamental domain $\FF_\SL$ for
the action of $\SL_n(\Z)$ on $\SL_n(\R)$ given by
\begin{equation*}
\FF_\SL=\{\gamma=\nu \tau \kappa:\;\nu\in N'\;\tau\in T';\kappa\in\SO_n(\R)\},
\end{equation*}
where $N'$ denotes a compact subset (depending on $\tau$) of the group
of lower-triangular matrices and $T'$ is the group of diagonal
matrices $(t_1,\ldots,t_n)$ with $t_i\leq c\,t_{i+1}$ for all $i$ and
some absolute constant $c=c_n>0$.  Given an $n$-ary positive definite
integer-valued quadratic form $Q$, viewed as a symmetric $n\times n$
matrix, we write $Q=\gamma I_n \gamma^T$, where $I_n$ is the
sum-of-$n$-squares diagonal quadratic form and
$\gamma=\nu\tau\in\SL_n(\R)$ is unique up to right multiplication by
an element in $\SO_n(\R)$. The condition that $Q$ is Minkowski reduced
is equivalent to the condition that $\gamma$ belongs to $\FF_\SL$.
The condition that $Q$ be quasi-reduced is simply then that $t_i\leq
c\,t_{i+1}$ (with no condition on $\nu$).


Consider the natural isomorphism $\R[x]/(g(x))\to \R[x]/(g(\rho x))$
of \'etale $\R$-algebras defined by $x\to\rho x$.  If $\theta$ denotes
the image of $x$ in $\R[x]/(g(x))$, then $\rho\theta$ is the image of
$x$ in $\R[x]/(g(\rho x))$ under this isomorphism.
Let $Q_1$ be the Gram matrix of the lattice basis
$1,\theta,\theta^2,\ldots,\theta^{n-1}$ in
$\R^{n}$ and $Q_\rho$ be the Gram matrix of the lattice basis
$1,\rho\theta,\rho^2\theta^2,\ldots,\rho^{n-1}\theta^{n-1}$ in
$\R^{n}$.  If the element $\tau \in T$
corresponding to $g(x)$ is $(t_1,\ldots,t_n)$, then the element
$\tau_\rho\in T$ corresponding to $g(\rho x)$ is $(t_1,\rho t_2,\rho^2
t_3,\ldots,\rho^{n-1}t_n)$. This is because $Q_\rho=\Lambda
Q_1\Lambda^T$, where $\Lambda$ is the diagonal matrix
$(1,\rho,\rho^2,\ldots,\rho^{n-1})$; therefore, if
$Q_1=(\nu\tau\kappa)I_n(\nu\tau\kappa)^T$, then $$Q_\rho=
(\Lambda\nu\tau\kappa)I_n(\Lambda\nu\tau\kappa)^T=(\nu'(\Lambda\tau)\kappa)I_n(\nu'(\Lambda\tau)\kappa)^T$$
for some $\nu'\in N$ depending on $\Lambda$, so
$\tau_\rho=\Lambda\tau$. For sufficiently large $\rho$, we then have
$\rho^{i-1}t_i\leq c\rho^{i}t_{i+1}$ for all $i=1,\ldots,n-1$, as
desired.
 \end{proof}

 \noindent
 Lemma~\ref{quasilemma} implies that most monic irreducible integer
 polynomials are strongly quasi-reduced:

\begin{lemma}\label{mostqr}
A density of $100\%$ of irreducible monic integer polynomials
$f(x)=x^n+a_1x^{n-1}+\cdots+a_n$ of degree~$n$, when ordered by
height $H(f):=\max\{|a_1|,|a_2|^{1/2},\ldots,|a_n|^{1/n}\}$,
are strongly quasi-reduced.
\end{lemma}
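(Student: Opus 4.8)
Since quasi-reducedness requires $\Delta(f)\neq 0$ and the polynomials $f\in\Vmn(\Z)$ with $H(f)<X$ and $\Delta(f)=0$ number $O(X^{n(n+1)/2-1})$ (factor off a repeated irreducible factor), and since by Proposition~\ref{propredboundall} the reducible ones are $o(X^{n(n+1)/2})$, it suffices to show that the number of $f\in\Vmn(\Z)$ with $H(f)<X$, $\Delta(f)\neq 0$, that are \emph{not} strongly quasi-reduced is $o(X^{n(n+1)/2})$. One may moreover argue separately on each of the finitely many archimedean ``signature'' components (fixed number of real roots), on each of which the roots — hence the Gram matrix of $R_f$ — are semialgebraic functions of the coefficients.

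\textbf{Shape decomposition and the deep-cusp picture.} Given such an $f$, set $\rho=H(f)$ and $f_0(x)=\rho^{-n}f(\rho x)$, so $H(f_0)=1$ and every root of $f_0$ has absolute value $\leq 2$; as the roots of $f$ are $\rho$ times those of $f_0$, in the basis $1,\theta,\dots,\theta^{n-1}$ the lattice $R_f$ is the image of $R_{f_0}$ under $\Lambda_\rho=\mathrm{diag}(1,\rho,\dots,\rho^{n-1})$. Exactly as in the proof of Lemma~\ref{quasilemma}, this multiplies the torus part of $R_{f_0}$ by $\Lambda_\rho$: if the Gram--Schmidt lengths of $R_{f_0}$ are $t_1,\dots,t_n$, then those of $R_f$ are $\rho^{j-1}t_j$. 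Since $f_0$ has bounded roots, each $t_j=O_n(1)$, while $\prod_j t_j\asymp|\Delta(f_0)|^{1/2}$; hence, if $f_0$ stays a fixed distance from the discriminant locus, say $|\Delta(f_0)|\geq\eta$, then each $t_j\geq c(\eta)$ with $c(\eta)\gg\eta^{1/2}$. In that case $R_f$ lies deep in the cusp once $\rho\geq C(\eta)$ with $C(\eta)\asymp\eta^{-1/2}$: the successive lengths satisfy $\rho^{j-1}t_j\leq c_n\,\rho^{j}t_{j+1}$ (so $f$ is quasi-reduced), and moreover $R_f$ has an essentially unique Minkowski-reduced basis of the required shape $1,h_1(\theta),\dots,h_{n-1}(\theta)$ with $h_i$ monic of degree $i$ — along the widely separated Gram--Schmidt directions the $i$-th reduced vector is forced to be $\theta^{i-1}$ corrected by a $\Z$-combination of $1,\theta,\dots,\theta^{i-2}$, and once the ratios of the $t_j$ are large this correction is determined by nearest-integer rounding of the relevant Gram--Schmidt coefficients, except on the finitely many real-analytic ``walls'' of the Minkowski domain where a rounding is ambiguous; let $W$ denote the set of shapes $f_0$ that land on such a wall for arbitrarily large $\rho$, a proper semialgebraic subset of each signature component. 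Verifying this last assertion uniformly in the cusp — that deep in the cusp the Minkowski-reduced basis has the stated form and is unique off a negligible set $W$ — is the delicate step, and the one I expect to be the main obstacle.

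\textbf{The sieve.} Choose $\eta=\eta(X)\to 0$ with $C(\eta(X))\leq X^{1/2}$, possible since $C(\eta)$ grows only polynomially in $\eta^{-1}$; e.g.\ $\eta(X)=X^{-1}$. Then any $f$ with $H(f)<X$ that fails to be strongly quasi-reduced must satisfy at least one of: (a) $H(f)\leq X^{1/2}$; (b) $|\Delta(f_0)|\leq\eta(X)$, i.e.\ $0<|\Delta(f)|\leq\eta(X)\,H(f)^{n(n-1)}\leq X^{n(n-1)-1}$; or (c) the shape $f_0$ lies in $W$. Case (a) contributes $O(X^{n(n+1)/4})=o(X^{n(n+1)/2})$. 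For case (b), weighted homogeneity of $\Delta$ (degree $n(n-1)$) under $a_i=X^i b_i$ identifies the relevant region with a subset of $\{H(b)<1,\ |\Delta(b)|<X^{-1}\}$, whose volume tends to $0$ because $\{\Delta=0\}$ has measure zero; a Davenport-type lattice-point count, with error $O_\epsilon(X^{n(n+1)/2-1+\epsilon})$ uniform in the threshold since the region has bounded complexity, then gives $o(X^{n(n+1)/2})$. Case (c) is the set of integer points in the cone over a positive-codimension semialgebraic set, hence $O_\epsilon(X^{n(n+1)/2-1+\epsilon})=o(X^{n(n+1)/2})$. Summing, the number of non-strongly-quasi-reduced $f\in\Vmn(\Z)$ with $H(f)<X$ and $\Delta(f)\neq 0$ is $o(X^{n(n+1)/2})$; combined with the reduction above and $\#\Vmn(\Z)_X=2^nX^{n(n+1)/2}+O(X^{n(n-1)/2})$, the lemma follows (summing the finitely many signature contributions if one wishes to fix the number of real roots).
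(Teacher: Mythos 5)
Your positive steps are fine as far as they go: the scaling relation (Gram--Schmidt lengths of $R_f$ are $\rho^{j-1}t_j$), the bound $t_j\gg|\Delta(f_0)|^{1/2}$, and the conclusion that domination $\rho^{j-1}t_j\le c_n\,\rho^{j}t_{j+1}$ holds once $\rho\gg|\Delta(f_0)|^{-1/2}$ amount to an effective form of Lemma~\ref{quasilemma}, and your case (a) and case (b) counts are plausible. But the lemma is about being \emph{strongly} quasi-reduced, and the step that carries the word ``strongly'' --- that deep in the cusp the Minkowski-reduced basis of $R_f$ has the shape $1,h_1(\theta),\dots,h_{n-1}(\theta)$ and is \emph{unique} away from a controllable exceptional set --- is exactly the step you assert by a rounding heuristic and then flag as ``the main obstacle.'' That is a genuine gap, not a technicality: uniqueness is what Lemma~\ref{reducedg} consumes, so without it your sieve only bounds failures of quasi-reducedness. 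Moreover, the exceptional set is set up incorrectly for the trichotomy: you define $W$ as the shapes $f_0$ that land on a wall ``for arbitrarily large $\rho$,'' but the tie/ambiguity relevant to a given integer polynomial occurs at the single value $\rho=H(f)$, so a polynomial can fail uniqueness at its own height while its shape lies outside $W$ and it satisfies neither (a) nor (b). The bad locus must be a condition on $f$ itself (coupling $f_0$ and $\rho$) in coefficient space, and the claims that it is a proper semialgebraic set of bounded complexity uniformly in the cusp and that its integer points number $O_\epsilon(X^{n(n+1)/2-1+\epsilon})$ are exactly what remain unproved.

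For comparison, the paper's proof sidesteps all of this quantitative apparatus: by Lemma~\ref{quasilemma}, each real $f$ with $\Delta(f)\neq0$ has a finite threshold $\rho_f$ beyond which $f(\rho x)$ is quasi-reduced; $\rho_f$ varies continuously, so on a compact region $B$ of unit-height polynomials of nonzero discriminant with $\Vol(B)>(1-\epsilon)\Vol(\{H<1\})$ there is a uniform threshold $\rho_B$, and every polynomial in the weighted dilate $\rho\cdot B$ with $\rho>\rho_B$ is quasi-reduced; counting lattice points in the expanding regions $\rho\cdot B$ and using that nonvanishing of the discriminant and strong quasi-reducedness are open (full-measure) conditions gives density $>1-\epsilon$ for every $\epsilon$, with irreducibility supplied separately. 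So no small-discriminant estimate, Davenport count, or explicit dependence $C(\eta)\asymp\eta^{-1/2}$ is needed for a $100\%$ statement. If you want to keep your quantitative route (which, if completed, would yield a power-saving version of Lemma~\ref{mostqr}), the work has to go into a precise cusp lemma: for lattices whose Gram--Schmidt lengths are sufficiently strongly dominated, the size-reduced basis is Minkowski-reduced and is the unique one unless certain explicit equalities of vector lengths hold, together with a count showing that integer polynomials producing such equalities at their own height are negligible.
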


\begin{proof}
Let $\epsilon>0$, and let $B$ be a compact region in $\R^n\cong
\Vmn(\R)$ consisting of monic real polynomials of nonzero discriminant
and height less than $1$ such that $$\Vol(B)>(1-\epsilon)\Vol(\{f\in
\Vmn(\R):H(f)<1\}).$$ For each $f\in B$, by Lemma~\ref{quasilemma}
there exists a minimal finite constant $\rho_f>0$ such that $f(\rho
x)$ is quasi-reduced for any $\rho>\rho_f$. The function $\rho_f$ is
continuous in $f$, and thus by the compactness of $B$ there exists a
finite constant $\rho_B>0$ such that $f(\rho x)$ is quasi-reduced for
any $f\in B$ and $\rho>\rho_B$.

Now consider the weighted homogeneously expanding region $\rho\cdot B$
in $\R^n\cong \Vmn(\R)$, where a real number $\rho>0$ acts on $f\in B$
by $(\rho\cdot f)(x)=f(\rho x)$.  Note that $H(\rho\cdot f)=\rho
H(f)$.  For $\rho>\rho_B$, we have that all polynomials in $\rho\cdot
B$ are quasi-reduced, and
$$\Vol(\rho\cdot B)>(1-\epsilon)\Vol(\{f\in \Vmn(\R):H(f)<\rho\}).$$
Letting $\rho$ tend to infinity shows that the density of monic
integer polynomials $f$ of degree $n$, when ordered by height, that
have nonzero discriminant and are strongly quasi-reduced is greater
than $1-\epsilon$. Since $\epsilon$ was arbitrary, and $100\%$ of integer
polynomials are irreducible, the lemma follows.
\end{proof}

We have the following variation of Theorem~\ref{polydisc2}.

\begin{theorem}\label{vanishinga1}
Let $n\geq1$ be an integer.  Then when
monic integer polynomials $f(x)=x^n+a_1x^{n-1}+\cdots+a_n$ of
degree~$n$ with $a_1=0$ are ordered by $H(f):=
\max\{|a_1|,|a_2|^{1/2},\ldots,|a_n|^{1/n}\}$, the density having
squarefree discriminant $\Delta(f)$ exists and is equal to
$\kappa_n=\prod_p\kappa_n(p)>0$, where $\kappa_n(p)$ is the density of
monic polynomials $f(x)$ over $\Z_p$ with vanishing
$x^{n-1}$-coefficient having discriminant indivisible by $p^2$.
\end{theorem}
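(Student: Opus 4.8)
The plan is to run the proof of Theorem~\ref{polydisc2} essentially verbatim, everywhere replacing the lattice $\Vmn(\Z)\cong\Z^n$ of monic integer polynomials of degree $n$ by its codimension-one sublattice $L:=\{f\in\Vmn(\Z):a_1=0\}\cong\Z^{n-1}$, and intersecting every auxiliary object with the hyperplane $\{a_1=0\}$. The degrees $n\leq 2$ are disposed of first: $n=1$ is trivial, while for $n=2$ one has $\kappa_2(2)=0$ (since $4\mid\Delta(x^2+a_2)=-4a_2$ for every $a_2$), so $\kappa_2=0$ and the asserted identity reads $0=0$; thus the positivity $\kappa_n>0$ is a genuine assertion only for $n\geq 3$, to which range I restrict. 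There, $\kappa_n(p)=1+O(p^{-2})$ for every odd $p$ by the same local analysis underlying the formula for $\lambda_n(p)$ (the potential $1/p$ contributions cancel, exactly as for $\lambda_n(p)$), so the Euler product $\kappa_n=\prod_p\kappa_n(p)$ converges; it is positive because each $\kappa_n(p)>0$ --- there is no local obstruction: e.g.\ $x^n+1$ is separable modulo $p$ whenever $p\nmid n$, and the finitely many primes dividing $n$ admit ad hoc separable witnesses --- and it is an upper bound for the density in question exactly as in the discussion preceding Theorem~\ref{thm:mainestimate}.

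Next I would set up the inclusion--exclusion sieve precisely as in \eqref{eqth1}: expanding ``$\Delta(f)$ squarefree'' as $\sum_{m\geq 1}\mu(m)$ over $m$ with $m^2\mid\Delta(f)$, the range $m\leq\sqrt X$ contributes the main term $\big(\prod_p\kappa_n(p)\big)\cdot\#\{f\in L:H(f)<X\}$ with negligible error, and one is reduced to bounding the tail $\sum_{m>\sqrt X}\#\{f\in L:H(f)<X,\ m^2\mid\Delta(f)\}$. This in turn reduces, exactly as Theorem~\ref{unif} reduces to Theorem~\ref{thm:mainestimate}, to the $\{a_1=0\}$-analogues of parts (a) and (b) of that theorem, with $n(n+1)/2$ lowered to $n(n+1)/2-1$ throughout. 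For part (a) the quantitative Ekedahl sieve applies just as well on the hyperplane $\{a_1=0\}$: the strong-divisibility locus (two distinct double roots, or a triple root) meets $\{a_1=0\}$ in a subscheme that is still of codimension two in $\A^{n-1}$, so the identical argument yields the same estimate with the exponent reduced by one. For part (b), the key structural point is that the linear invariant $a_1$ equals, up to sign, the sum of the anti-diagonal entries of $B\in W$, is $G$-invariant, and satisfies $a_1(\sigma_m(f))=a_1(f_{\sigma_m(f)})=a_1(f)$; hence the maps $\sigma_m$ and $\bar\sigma_m$ of Theorems~\ref{keymaporbit} and \ref{keymaporbiteven} restrict to maps from $\{f\in\U_m^{\w,\irr}:a_1=0\}$ into $G(\Z)\backslash\LL^{0}$, where $\LL^{0}:=\{v\in\LL:a_1(v)=0\}$ is a codimension-one sublattice of $\LL$ preserved by $G(\Z)$ (and the reducible polynomials, and for even $n$ the $g\bar g$ ones, remain negligible by Proposition~\ref{propredboundall}, whose proof transfers to $L$).

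It then remains to rerun the geometry-of-numbers argument of Section~\ref{sgomodd} (resp.\ Section~\ref{sgomeven}) with $\LL$ replaced by $\LL^{0}$ and $W_{00}$ by $W_{00}^{0}:=W_{00}\cap\{a_1=0\}$. The bookkeeping point that makes this painless is that every anti-diagonal coordinate $b_{i,n+1-i}$ of $W_{00}$ has $T$-weight $1$ by \eqref{wbij} (resp.\ \eqref{wbij2}), so passing from $W_{00}$ to $W_{00}^{0}$ simply eliminates one coordinate that contributes exactly a factor $X$ to each lattice-point count in the cusp; meanwhile the weight $w(Q)$ of the $Q$-invariant computed in \eqref{eqQweight} (resp.\ \eqref{eqQweighteven}), the cuspidal vanishing arguments confining the relevant lattice points to $\tfrac12 W_{00}(\Z)$ (resp.\ $\tfrac14 W_{00}(\Z)$), and the Selberg-sieve input are all untouched. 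Thus Propositions~\ref{propodd}--\ref{proplargeQbound} (resp.\ Propositions~\ref{propeven}--\ref{proplargeQboundeven}) hold verbatim with their right-hand sides divided by an extra power of $X$, giving the $\{a_1=0\}$-analogue of Theorem~\ref{thm:mainestimate}(b); combining everything as in Section~\ref{sec:sieve} with $M=X^{1/2}$ yields $\#\{f\in L:H(f)<X,\ \Delta(f)\ \text{squarefree}\}=\kappa_n\cdot\#\{f\in L:H(f)<X\}+O_\epsilon(X^{n(n+1)/2-6/5+\epsilon})$, and since $\#\{f\in L:H(f)<X\}=2^{n-1}X^{n(n+1)/2-1}+O(X^{n(n+1)/2-2})$ the theorem follows.

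The one step requiring genuine verification, rather than mechanical transcription, is this last one: one must confirm that the extra constraint $a_1=0$ is transverse to the cuspidal directions --- that it cuts out a weight-one coordinate of $W_{00}$ rather than interacting with the negatively-weighted coordinates whose proliferation deep in the cusp governs the count --- so that imposing it costs exactly one power of $X$ and leaves the delicate cusp analysis, together with the role of $w(Q)$ inside it, structurally intact. I expect this transversality check to be the only real subtlety; once it is in place, every remaining estimate in Sections~\ref{sec:monicodd}--\ref{sec:sieve} carries over by inspection.
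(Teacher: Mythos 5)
Your proposal is correct and is essentially the paper's own argument: the paper proves Theorem~\ref{vanishinga1} by the one-line observation that one replaces $W$ (along with $W_0$ and $W_{00}$) by the codimension-one subspace of symmetric matrices with anti-trace $0$ and reruns the proof of Theorem~\ref{polydisc2}, which is exactly your restriction to $\{a_1=0\}$ together with the observation that the anti-diagonal coordinates have $T$-weight $1$, so the constraint costs precisely one factor of $X$ and leaves the cusp analysis and the role of $w(Q)$ untouched. Your extra checks (the $\sigma_m$-maps preserve $a_1$, the Ekedahl and Selberg-sieve inputs restrict to the hyperplane, and the Euler product converges and is positive for $n\geq 3$) are the details the paper leaves implicit, so there is no substantive difference in approach.
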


Indeed, the proof of Theorem~\ref{polydisc2} applies also to those
monic integer polynomials having vanishing $x^{n-1}$-coefficient 
without any essential change; one simply replaces the representation
$W$ (along with $W_0$ and $W_{00}$) by the codimension-1 linear
subspace consisting of symmetric matrices with anti-trace $0$, but
otherwise the proof carries through in the identical manner.  The
analogue of Theorem~\ref{vanishinga1} holds also if the condition
$a_1=0$ is replaced by the condition $0\leq a_1<n$; in this case,
$\kappa_n=\prod_p\kappa_n(p)>0$ is replaced by the same constant
$\lambda_n=\prod_p\lambda_n(p)>0$ of Theorem~\ref{polydisc2}, since
for any monic degree-$n$ polynomial $f(x)$ there is a unique constant
$c\in\Z$ such that $f(x+c)$ has $x^{n-1}$-coefficient $a_1$ satisfying
$0\leq a_1<n$.

Lemmas \ref{reducedg} and \ref{mostqr} and Theorem~\ref{vanishinga1}
imply that 100\% of monic integer irreducible polynomials having
squarefree discriminant and vanishing $x^{n-1}$-coefficient (or those
having $x^{n-1}$-coefficient non-negative and less than $n$), when
ordered by height, yield {\it distinct} degree-$n$ fields.  Since
polynomials of height less than $X^{1/(n(n-1))}$ have absolute
discriminant $\ll X$, and since number fields of degree $n$ and
squarefree discriminant always have associated Galois group $S_n$, we
see that the number of $S_n$-number fields of degree $n$ and absolute
discriminant less than $X$ is $\gg
X^{(2+3+\cdots+n)/(n(n-1))}=X^{1/2+1/n}$. We have proven
Corollary~\ref{monogenic}.

\begin{remark}{\em
The statement of Corollary~\ref{monogenic} holds even if one specifies
the real signatures of the monogenic $S_n$-number fields of degree
$n$, with the identical proof.  It holds also if one imposes any
desired set of local conditions on the degree-$n$ number fields at a
finite set of primes, so long as these local conditions do not
contradict local monogeneity.  }\end{remark}

\begin{remark}{\em
We conjecture that a positive proportion of monic integer polynomials
of degree~$n$ with $x^{n-1}$-coefficient non-negative and less than
$n$ and absolute discriminant less than $X$ have height $O(
X^{1/(n(n-1))})$, where the implied $O$-constant depends only on $n$.
That is why we conjecture that the lower bound in
Corollary~\ref{monogenic} also gives the correct order of magnitude
for the upper bound.

In fact, let $C_n$ denote the $(n-1)$-dimensional Euclidean volume of
the $(n-1)$-dimensional region $R_0$ in $\Vmn(\R)\cong\R^n$
consisting of all polynomials $f(x)$ with vanishing
$x^{n-1}$-coefficient and absolute discriminant less than 1.  Then the
region $R_z$ in $\Vmn(\R)\cong\R^n$ of all polynomials $f(x)$ with
$x^{n-1}$-coefficient equal to $z$ and absolute discriminant less than
1 also has volume $C_n$, since $R_z$ is obtained from $R_0$ via the
volume-preserving transformation $x\mapsto x+z/n$.  Since we expect
that 100\% of monogenic number fields of degree $n$ can be expressed
as $\Z[\theta]$ in exactly one way (up to transformations of the form
$\theta\mapsto \pm \theta + c$ for $c\in \Z$), in view of
Theorem~\ref{polydiscmax2} we conjecture that the number of monogenic
number fields of degree $n$ and absolute discriminant less than $X$ is
asymptotic to
\begin{equation}
\frac{nC_n}{2\zeta(2)} X^{1/2+1/n}.
\end{equation}
When $n=3$, a Mathematica computation shows that we have
$C_3= \frac{2^{1/3}(3+\sqrt{3})}{45} \frac{\Gamma(1/2)\Gamma(1/6)}{\Gamma(2/3)}$}.
\end{remark}

Finally, we turn to the proof of
Corollary~\ref{shortvector}. Following \cite{EV}, for any algebraic
number $x$, we write $\|x\|$ for the maximum of the archimedean
absolute values of~$x$. Given a number field~$K$, write
$s(K)=\inf\{\|x\|: x\in\OO_K,\; \Q(x)=K\}$. We consider the number of
number fields $K$ of degree~$n$ such that $s(K)\leq Y$.

As already pointed out in \cite[Remark~3.3]{EV}, an upper bound of $\ll
Y^{(n-1)(n+2)/2}$ is easy to obtain.  Namely, a bound on the
archimedean absolute values of an algebraic number~$x$ gives a bound
on the archimedean absolute values of all the conjugates of $x$, which
then gives a bound on the coefficients of the minimal polynomial of
$x$. Counting the number of possible minimal polynomials satisfying
these coefficient bounds gives the desired upper bound.

To obtain a lower bound of $\gg Y^{(n-1)(n+2)/2}$, we use
Lemmas~\ref{reducedg} and \ref{mostqr} and
Theorem~\ref{vanishinga1}. Suppose $f(x)=x^n + a_2x^{n-2} + \cdots +
a_n$ is an irreducible monic integer polynomial of degree $n$.
Let~$\theta$ denote a root of $f(x)$.
If $H(f)\leq Y$, then $|\theta|\ll Y$;
this follows, e.g., from Fujiwara's bound~\cite{Fujiwara}:
$$ \|\theta\|\leq  \max\{ |a_1|,|a_2|^{1/2},\ldots,|a_{n-1}|^{1/(n-1)}|, |a_n/2|^{1/n}\}.$$

Therefore, if $H(f)\leq Y$, then
\begin{equation}\label{eq:est}
s(\Q[x]/(f(x))) \leq \|\theta\| \ll Y.
\end{equation}
Now Lemma~\ref{mostqr} and Theorem \ref{vanishinga1} imply that there are
$\gg Y^{(n-1)(n+2)/2}$ such polynomials $f(x)$ of height less than $Y$ that have squarefree discriminant and are also strongly
quasi-reduced. Lemma~\ref{reducedg} and \eqref{eq:est} then imply that
these polynomials define distinct $S_n$-number fields $K$ of degree $n$ with
$s(K)\leq Y$. This completes the proof of Corollary~\ref{shortvector}.

\subsection*{Acknowledgments}

We thank Levent Alpoge, Benedict Gross, Wei Ho, Kiran Kedlaya, Hendrik
Lenstra, Barry Mazur, Bjorn Poonen, Peter Sarnak, and Ila Varma for
their kind interest and many helpful conversations.  The first and third
authors were supported by a Simons Investigator Grant and NSF Grant DMS-1001828.

\bigskip

\end{document}